\newtheorem{theorem}{Theorem}[section] 
\newtheorem{proposition}[theorem]{Proposition} 
\newtheorem{corollary}[theorem]{Corollary} 
\newtheorem{lemma}[theorem]{Lemma} 
\theoremstyle{definition}
\newtheorem{examples}[theorem]{Examples}
\newtheorem{remark}[theorem]{Remark} 
\newtheorem{remarks}[theorem]{Remarks}
\newcommand{\CC}{{\mathbb C}} 
\newcommand{\NN}{{\mathbb N}}
\newcommand{\RR}{{\mathbb R}} 
\newcommand{\cA}{{\mathcal A}} 
\newcommand{\cB}{{\mathcal B}} 
\newcommand{\cC}{{\mathcal C}} 
\newcommand{\cD}{{\mathcal D}} 
\newcommand{\cE}{{\mathcal E}} 
\newcommand{\cF}{{\mathcal F}} 
\newcommand{\cG}{{\mathcal G}} 
\newcommand{\cH}{{\mathcal H}} 
\newcommand{\cI}{{\mathcal I}}
\newcommand{\cJ}{{\mathcal J}} 
\newcommand{\cK}{{\mathcal K}} 
\newcommand{\cL}{{\mathcal L}} 
\newcommand{\cM}{{\mathcal M}} 
\newcommand{\cP}{{\mathcal P}} 
\newcommand{\cR}{{\mathcal R}} 
\newcommand{\cS}{{\mathcal S}}
\newcommand{\fk}{\mathbf{k}}
\newcommand{\Ra}{\Rightarrow}
\newcommand{\ra}{\rightarrow} 
\newcommand{\ol}{\overline} 
\newcommand{\tl}{\widetilde}
\newcommand{\tr}{\operatorname{tr}} 
\let\phi=\varphi 
\newcommand{\iac}{\mathrm{i}}
\newcommand{\lin}{\operatorname{Lin}}
\newcommand{\nr}[1]{\vspace{0.1ex}\noindent\hspace*{12mm}\llap{\textup{(#1)}}} 
\begin{document} 
\title[Invariant Kernels with Values Continuously Adjointable Operators]{Representations 
of $*$-semigroups  associated to invariant kernels with values continuously adjointable 
operators}\thanks{The 
second named author's work supported by a grant of the Romanian 
National Authority for Scientific Research, CNCS  UEFISCDI, project number
PN-II-ID-PCE-2011-3-0119.}
 
 \date{\today,\ \currenttime}
 
 \author[S. Ay]{Serdar Ay}
 \address{Department of Mathematics, Bilkent University, 06800 Bilkent, Ankara, 
Turkey}
 \email{serdar@fen.bilkent.edu.tr}
 
\author[A. Gheondea]{Aurelian Gheondea} 
\address{Department of Mathematics, Bilkent University, 06800 Bilkent, Ankara, 
Turkey, \emph{and} Institutul de Matematic\u a al Academiei Rom\^ane, C.P.\ 
1-764, 014700 Bucure\c sti, Rom\^ania} 
\email{aurelian@fen.bilkent.edu.tr \textrm{and} A.Gheondea@imar.ro} 

\begin{abstract} We consider positive semidefinite 
kernels valued in the $*$-algebra of continuously
adjointable operators on a VH-space
(Vector Hilbert space in the sense of Loynes) and that are
invariant under actions of $*$-semigroups. For such a kernel we obtain two
necessary and sufficient boundedness conditions in order for there to exist
$*$-representations of the underlying $*$-semigroup on a
VH-space linearisation, equivalently, on a reproducing kernel
VH-space. We exhibit several situations when the latter boundedness condition is automatically 
fulfilled. For example, when specialising to the case of Hilbert modules over locally
$C^*$-algebras, we show that both boundedness conditions are automatically
fulfilled and, consequently, this general approach provides a rather direct
proof of the general Stinespring-Kasparov type dilation theorem for
completely
positive maps on locally $C^*$-algebras and with values adjointable operators on Hilbert
modules over locally $C^*$-algebras.
\end{abstract} 

\subjclass[2010]{Primary 47A20; Secondary 43A35, 46E22, 46L89}
\keywords{ordered $*$-space, admissible space, VH-space, 
positive semidefinite kernel, $*$-semigroup, invariant kernel, linearisation, reproducing kernel,
$*$-representation, locally $C^*$-algebra, Hilbert locally 
$C^*$-module, completely positive map.}
\maketitle 

\section*{Introduction}

In 1965, R.M.~Loynes published two articles \cite{Loynes1} and \cite{Loynes2}
where he considered generalisations of the notions of inner product space and
of Hilbert space, that he called VE-space (Vector Euclidean space)
and, respectively, VH-space (Vector Hilbert space). These are vector spaces
on which there are ``inner products'' with values in certain ordered
$*$-spaces, hence ``vector valued inner products'', see subsections
\ref{ss:vestlo}--\ref{ss:vhs} for precise definitions. 
His motivation was coming from stochastic processes \cite{Loynes3}
and the main results refer to a generalisation of B.~Sz.-Nagy' Dilation Theorem
\cite{BSzNagy} for operator valued positive 
semidefinite maps on
$*$-semigroups \cite{Loynes1}, and to some other results on spectral theory of linear bounded
operators on VH-spaces \cite{Loynes2}. These ideas have been followed in prediction theory
\cite{Chobanyan},
\cite{Weron}, \cite{WeronChobanyan}, in dilation theory \cite{GasparGaspar},
\cite{GasparGaspar1}, \cite{GasparGaspar2}, and a few others.

On the other hand, special cases of VH-spaces have been later
considered independently of the Loynes' articles. Thus,
the concept of Hilbert module over a $C^*$-algebra was introduced
in 1973 by W.L. Paschke in \cite{Paschke}, following
I.~Kaplansky \cite{Kaplansky}, and independently by M.A. Rieffel one year later in
\cite{Rieffel}, and these two articles triggered a whole domain
of research, see e.g.\ \cite{Lance} and \cite{ManuilovTroitsky} 
and the rich bibliography cited there. Hilbert
modules over $C^*$-algebras are special cases of VH-spaces. 
Dilation theory plays a very important role in this theory and there are many
dilation results of an impressive diversity, but
the domain of Hilbert modules over $C^*$-algebras remained unrelated to that
of VH-spaces.  Another special case of a VH-space is that of Hilbert modules
over $H^*$-algebras of P.P.~Saworotnow \cite{Saworotnow}. 
Also, in 1985 A.~Mallios \cite{Mallios} and later in
1988 N.C.~Phillips \cite{Phillips} introduced and
studied the concept of Hilbert module over locally $C^*$-algebra, which is
yet another particular case of VH-space over an admissible space. The theory
of Hilbert spaces over locally $C^*$-algebras is an active domain of research
as well, e.g.\ see \cite{Joita2} and the rich bibliography cited there.

Taking into account the importance and the diversity of dilation theory,
e.g.\ see \cite{Arveson1}, 
it is natural to ask for its unification under a general
framework. Historically, the theory of 
positive semidefinite kernels, having values in operator spaces or $*$-ordered
spaces, e.g.\ see 
\cite{EvansLewis}, \cite{ParthasarathySchmidt}, \cite{Murphy}, \cite{Heo}, and
\cite{Szafraniec}, 
to cite just a few, turned out to provide, to a certain extent, 
such a unification framework, that can be made much more efficient when 
a certain "symmetry" is added, more 
precisely, the invariance under the action of a $*$-semigroup, 
e.g.\ see \cite{ConstantinescuGheondea2}. Following \cite{Gheondea}, in this
article we show that this unifying framework becomes significantly more successful 
when kernels with values linear operators on VH-spaces are employed. 
In \cite{Gheondea} there is one extra assumption on the range of
the kernels, namely that of boundedness in the sense of Loynes, which
restricts the area of applicability to $C^*$-algebras 
and, in order to unify other dilation results, 
e.g.\ the dilation of completely positive maps on Hilbert modules over locally
$C^*$-algebras, see \cite{Joita}, the boundedness condition should be relaxed. 

This article is one step further in the programme, initiated in
\cite{Gheondea}, of unifying dilation results
under a setting comprising positive semidefinite kernels that are invariant 
under actions of $*$-semigroups and with values continuous and continuously
adjointable operators on VH-spaces, and 
a continuation of the work \cite{AyGheondea} in which we obtained a nontopological 
version of this kind of dilation theorem. From this point of view, the
main result of this article is Theorem~\ref{t:vhinvkolmo2} that provides 
two necessary and sufficient conditions for the existence of 
$*$-representions of the given
$*$-semigroup by continuous and continuously adjointable operators on
VH-spaces. The boundedness condition (b1) in Theorem~\ref{t:vhinvkolmo2} 
is the analog of the
celebrated Sz.-Nagy's boundedness condition \cite{BSzNagy} (see
\cite{Szafraniec} for a historical perspective of this issue) and is related to the
continuity of linear operators in the range of the $*$-representation, an
obstruction caused by the gap between $*$-semigroups and groups. The boundedness
condition (b2) from Theorem~\ref{t:vhinvkolmo2} is new and refers to an
obstruction related to the
continuity of the adjoint operators which, in the case of VH-spaces, is not 
automatic.

Theorem~\ref{t:vhinvkolmo2} unifies most of the known dilation theorems for
operator valued maps, in the
chain of the two classical Naimark's theorems for operator valued positive 
semidefinite functions on commutative groups \cite{Naimark2} and, respectively, 
for semispectral measures \cite{Naimark1}, that is, 
the Stinespring's Theorem \cite{Stinespring} for operator valued completely 
positive maps on
$C^*$-algebras, the Sz.-Nagy's Theorem \cite{BSzNagy} for operator valued positive
semidefinite functions on $*$-semigroups and its VH-space
generalization of Loynes \cite{Loynes1}, as well as 
the dilation theorems for completely positive maps on $C^*$-algebras with
values adjointable operators on Hilbert modules over $C^*$-algebras of
Kasparov \cite{Kasparov} and that 
for completely positive maps on locally $C^*$-algebras with values adjointable
operators on Hilbert modules over locally $C^*$-algebras of M.~Joi\c ta
\cite{Joita}. In this article, we explicitly show how the 
latter is obtained as a consequence of Theorem~\ref{t:vhinvkolmo3}.

In the following we briefly describe the contents of this article. The first
section is dedicated to  
notation and preliminary results on VH-spaces and their linear
operators. Since we built on the  
fabric of dilation theory on VE-spaces over ordered $*$-spaces, we first 
briefly review necessary concepts, results, and constructions from
\cite{AyGheondea}. One of the main
mathematical objects used in this research is that of Loynes' admissible space,
that is, a complete 
topologically ordered $*$-space. A list of nine examples, that we carefully
present, indicates the unifying  
potential of this concept. VH-spaces and their linear operators are discussed in 
Subsection~\ref{ss:vhs}. Here, we draw attention to Lemma~\ref{l:topology} 
that clarifies the locally convex topology on VH-spaces and to the six 
generic examples that illustrate the  
unifying potential of the concept of VH-space. Three of the main technical
obstructions in this theory  
are related to the lack of a general Schwarz type inequality, to the existence
of non-orthocomplemented VH-subspaces, and to the lack
of a reliable substitute 
for the Riesz's Representation Theorem for continuous linear functionals. 
Consequently, many 
technical ingredients that are used in this article gravitates around finding
sufficiently powerful  
surrogates of these missing tools. In this respect, in
Corollary~\ref{c:4schwarz} and Lemma~\ref{l:redunt} 
we obtain some surrogates of the Schwarz inequality and then
refinements are performed in Subsection~\ref{ss:cb2t}.

The main section of this article refers to positive semidefinite kernels with
values continuous and 
continuously adjointable linear operators on VH-spaces. 
Again, since we built on dilation results on VE-spaces over 
$*$-ordered spaces investigated in \cite{AyGheondea}, we first review 
the necessary terminology, results, and constructions corresponding to
positive semidefinite kernels in the nontopological case. 
When working with kernels, there is a 
paradigmatic idea that the natural approach is through reproducing kernel 
spaces, 
e.g.\ see \cite{Aronszajn}, \cite{Szafraniec} and the rich bibliography cited
there. For this reason, we first investigate basic properties of VH-space
linearisations (Kolmogorov decompositions) and their interplay with
reproducing kernel VH-spaces which, at this 
level of generality, require a careful
treatment: most of the properties that we expect are true, but some of the
proofs are rather different. We stress
that our approach of the dilation constructions is through reproducing kernel
spaces that has  
substantial advantages: the objects that are built preserve their concrete
character to the largest  
possible extent, for example we always obtain (operator valued) function spaces and not
abstract quotient  
spaces, in contrast to the GNS construction which is traditionally extensively used in dilation theory.

Theorem~\ref{t:vhinvkolmo2}, the main result of this article, emphasises 
the boundedness condition (b1), the analog of the Sz.-Nagy's boundedness condition, 
and the boundedness condition (b2) that shows up due to topological obstructions
of dealing with linear operators on locally convex spaces, especially in
connection with the  
topological pathologies related to multiplication. Recently, a related
phenomenon has been discussed by W.~\.Zelazko \cite{Zelazko} who introduced a
class of continuous linear operators on locally convex spaces $\cE$ 
for which there is a certain control of the growth of their powers uniformly
on $\cE$, that he called m-topologisable (multiplicatively topologisable), 
see also \cite{Bonet}. In Subsection~\ref{ss:cb2t} we show that, when a positive semidefinite
kernel has m-topologisable operators on its whole diagonal, a stronger condition than the
boundedness condition (b2) is obtained by an iteration method, previously
employed in spectral theory \cite{Krein}, \cite{Reid}, \cite{Lax},
\cite{Dieudonne}, in particular, m-topologisability propagates throughout the kernel. 
However, the question whether condition (b2) holds
at the level of generality of positive semidefinite kernels with values
continuous and continuously adjointable operators on VH-spaces remains open.

The last section is dedicated to show the unifying coverage of our
Theorem~\ref{t:vhinvkolmo2}, by providing a direct proof of the dilation
theorem from \cite{Joita}. Theorem~\ref{t:vhinvkolmo3} is a
remarkable consequence of Theorem~\ref{t:vhinvkolmo2} and of the previously
obtained results for m-topologisable operators, which turns out to be the case
in this context. This theorem shows that, for invariant positive
semidefinite kernels with values adjointable operators on a Hilbert module over
a locally $C^*$-algebra, the boundedness condition (b2) is automatic, hence the
existence of $*$-representations on a Hilbert locally $C^*$-module
linearisation of the kernel, equivalently, on the
reproducing kernel Hilbert locally $C^*$-module of the kernel, depends only on
the boundedness condition (b1).
Finally, we point out why the boundedness
conditions (b1) discussed above is automatic as well in the special case of
completely positive maps on locally $C^*$-algebras and with values adjointable
operators on 
Hilbert modules over locally $C^*$-algebras, by an adaptation of the technique
of Murphy \cite{Murphy} that solves the nonunital case by approximate
identities in locally $C^*$-algebras.

\section{Preliminaries}

In this section we briefly review most of the definitions and some basic
facts on ordered $*$-spaces, VE-spaces over ordered $*$-spaces, and their
linear operators, then review and get some facts on VH-spaces over admissible
spaces and their linear operators.

\subsection{VE-Spaces and Their Linear Operators.}\label{ss:vestlo}
A complex vector space $Z$ is called \emph{ordered $*$-space}, see
\cite{PaulsenTomforde}, if:
\begin{itemize}
\item[(a1)] $Z$ has an \emph{involution} $*$, that is, a map 
$Z\ni z\mapsto z^*\in Z$ 
that is \emph{conjugate linear} 
(($s x+t y)^*=\ol s x^*+\ol t y^*$ for all 
$s,t\in\CC$ and all $x,y\in Z$) and \emph{involutive} 
($(z^*)^*=z$ for all $z\in Z$). 
\item[(a2)] In $Z$ there is a \emph{cone} $Z^+$ ($s x+t y\in Z^+$ 
for all numbers $s,t\geq 0$ and all $x,y\in Z^+$), that is  
\emph{strict} ($Z^+\cap -Z^+=\{0\}$), and consisting of 
\emph{selfadjoint elements} 
only  ($z^*=z$ for all 
$z\in Z^+$). This cone is used to define a \emph{partial order} on the real 
vector space of all selfadjoint elements in $Z$: 
$z_1  \geq z_2$ if $z_1-z_2 \in Z^+$.
\end{itemize}

Recall that a \emph{$*$-algebra} $\cA$ is a complex algebra onto which 
there is defined an \emph{involution} $\cA\ni a\mapsto a^*\in\cA$, that is, 
$(\lambda a+\mu b)^*=\overline \lambda a^*+\overline \mu b^*$, 
$(ab)^*=b^*a^*$, and $(a^*)^*=a$, for all $a,b\in\cA$ and all 
$\lambda,\mu\in\CC$. 

An \emph{ordered $*$-algebra} $\cA$ is a $*$-algebra 
such that it is an ordered $*$-space, more precisely, it has the following
property.
\begin{itemize}
\item[(osa1)] There exists a strict cone $\cA^+$ in $\cA$ such that for any 
$a\in\cA^+$ we have $a=a^*$.
\end{itemize}
Clearly, any ordered $*$-algebra is an ordered $*$-space. In particular, 
given $a\in\cA$, we denote $a\geq 0$ if $a\in\cA^+$ and, for 
$a=a^*\in\cA$ and $b=b^*\in\cA$, we denote $a\geq b$ if $a-b\geq 0$.

Given a complex linear space $\cE$ and an
ordered $*$-space space $Z$, a \emph{$Z$-gramian}, also called a 
\emph{$Z$-valued inner product}, is, by definition, a mapping  
$\cE\times \cE\ni (x,y) \mapsto [x,y]\in Z$ subject to 
the following properties:
\begin{itemize}
\item[(ve1)] $[x,x] \geq 0$ for all $x\in \cE$, and $[x,x]=0$ if and only if 
$x=0$.
\item[(ve2)] $[x,y]=[y,x]^*$ for all $x,y\in\cE$.
\item[(ve3)] $[x,\alpha y_1+\beta y_2]=\alpha 
[x,y_1]+\beta [x,y_2]$ for all $\alpha,\beta\in \mathbb{C}$ and 
all $x_1,x_2\in \cE$.
\end{itemize}

A complex linear space $\cE$ onto which a $Z$-gramian 
$[\cdot,\cdot]$ is specified, for a 
certain ordered $*$-space $Z$, is called a \emph{VE-space} 
(Vector Euclidean space) over $Z$, cf.\ \cite{Loynes1}. 
 
Given a pairing 
$[\cdot,\cdot]\colon \cE\times \cE\ra Z$, where $\cE$ is some vector space
and $Z$ is an ordered $*$-space, 
and assuming that $[\cdot,\cdot]$ satisfies only the
axioms (ve2) and (ve3), then a
\emph{polarisation formula} holds
\begin{equation}\label{e:polar} 4[x,y]=\sum_{k=0}^3 \iac^k 
[x+\iac^k y,x+\iac^k y],\quad x,y\in \cE.
\end{equation} In particular, this formula holds on a VE-space and it
shows that the $Z$-gramian is 
perfectly defined by the $Z$-valued quadratic map 
$\cE\ni x\mapsto [x,x]\in Z$.

A \emph{VE-spaces isomorphism} is, by definition,
a linear bijection $U\colon \cE\ra \cF$, for two VE-spaces 
over the same ordered $*$-space $Z$, which is \emph{isometric}, that is, 
$[Ux,Uy]_\cF=[x,y]_\cE$ for all $x,y\in \cE$.

In general VE-spaces, an analog of the Schwarz Inequality may not hold but 
some of its consequences can be proven using slightly different 
techniques, cf.\ \cite{Loynes1}, \cite{Loynes2}. 
Given two VE-spaces $\cE$ and $\cF$, over the same ordered $*$-space 
$Z$, one can consider the vector space $\cL(\cE,\cF)$ of all
linear operators $T\colon \cE\ra\cF$. 
A linear operator $T\in\cL(\cE,\cF)$ is 
called \emph{adjointable} if there exists $T^*\in\cL(\cF,\cE)$ such that
\begin{equation}\label{e:adj} [Te,f]_\cF=[e,T^*f]_\cE,\quad e\in\cE,\ f\in\cF.
\end{equation} The operator $T^*$, if it exists, is uniquely determined by $T$ 
and called its \emph{adjoint}.
Since an analog of the Riesz Representation Theorem for VE-spaces
may not exist, in general, 
there may be not so many adjointable operators. Denote by 
$\cL^*(\cE,\cF)$ the vector space of all adjointable operators from 
$\cL(\cE,\cF)$.
Note that $\cL^*(\cE)=\cL^*(\cE,\cE)$ 
is a $*$-algebra with respect to the involution $*$ 
determined by the operation of taking the adjoint. 

An operator $A\in\cL(\cE)$ is called \emph{selfadjoint} if
$ [Ae,f]=[e,Af]$, for all $ e,f\in \cE$.
Any selfadjoint operator $A$ is adjointable and 
$A=A^*$.
By the polarisation formula \eqref{e:polar}, $A$ is selfadjoint if and only if
$ [Ae,e]=[e,Ae]$, for all $e\in\cE$.
An operator $A\in\cL(\cE)$ is \emph{positive} if
$[Ae,e]\geq 0$, for all $e\in\cE$.
Since the cone $Z^+$ consists of selfadjoint elements only, 
any positive operator is selfadjoint and hence adjointable. 
Note that any VE-space isomorphism $U$
is adjointable, invertible, and $U^*=U^{-1}$, hence, equivalently, 
we can call it \emph{unitary}.

A \emph{VE-module $\cE$ over an ordered $*$-algebra $\cA$} is a right 
$\cA$-module on which there exists an \emph{$\cA$-gramian} 
$[\cdot,\cdot]_\cE\colon \cE\times \cE \ra \cA$ with respect to which it
is a VE-space, that is, (ve1)-(ve3) hold, and, in addition,
\begin{itemize}
\item[(vem)] $[e,fa+gb]_\cE=[e,f]_\cE a+[e,g]_\cE b$ for all $e,f,g\in\cE$ and 
all $a,b\in\cA$.
\end{itemize}

Given an ordered $*$-algebra $\cA$ and two VE-modules $\cE$ and $\cF$
over $\cA$, an operator $T\in\cL(\cE,\cF)$ is called a \emph{module map} if
\begin{equation*} T(ea)=T(e)a,\quad e\in\cE,\ a\in\cA.
\end{equation*}
It is easy to see that
any operator $T\in\cL^*(\cE,\cF)$ is a module map, e.g.\ see \cite{AyGheondea}.

\subsection{Admissible Spaces.}\label{ss:as}
The complex vector space $Z$ is called \emph{topologically ordered 
$*$-space} if it is an ordered $*$-space, that is, axioms (a1) and (a2)
hold and, in addition,
\begin{itemize}
\item[(a3)] $Z$ is a \emph{Hausdorff locally convex space}.
\item[(a4)] The topology of $Z$ is \emph{compatible} with the partial 
ordering in the 
sense that there exists a base of the topology, linearly generated by a 
family of neighbourhoods $\{ C \}_{C\in\cC_0}$ of the origin that
are absolutely convex and 
\emph{solid}, in the sense that, 
if $x\in C$ and $y\in Z$ are such that $0\leq y\leq x$, then $y\in C$.
\end{itemize}

\begin{remark}\label{r:increasing}
Axiom (a4) is equivalent with the following one:
\begin{itemize}
\item[(a4$^\prime$)] There exists a collection of seminorms 
$\{p_j\}_{j\in \cJ}$ defining the 
topology of $Z$ that, for any $j\in\cJ$, $p_j$ is \emph{increasing}, in the 
sense that, $0\leq x\leq y$ implies $p_j(x)\leq p_j(y)$. 
\end{itemize}

To see this, e.g.\ see Lemma 1.1.1 and Remark 1.1.2 of 
\cite{Ciurdariu}, letting $\cC_0$ be a family of 
open, absolutely convex and solid neighbourhoods of the origin 
defining the topology of $Z$, 
for each $C\in\cC_0$, consider the Minkowski seminorm $p_C$ associated 
to $C$, 
\begin{equation}\label{e:seminorm}
p_C(x)=\mbox{inf}\{ \lambda \mid \lambda>0,\quad x\in\lambda C \},\quad x\in Z.
\end{equation} 
Clearly, $\{p_C \mid C\in\cC_0\}$ define the topology of 
$Z$. Moreover, $p_C$ is increasing. To see this, for any $\epsilon> 0$, there exists 
$p_C(x)\leq \lambda_{\epsilon}\leq p_C(x)+\epsilon$ 
such that $x\in\lambda_{\epsilon}C$. Since 
$C$ is balanced, $\lambda_{\epsilon}C \subset (p_C(x)+\epsilon)C$, 
so $x\in(p_C(x)+\epsilon)C$. As $C$ is also solid, if $0\leq y\leq x$, then we have 
$y\in (p_C(x)+\epsilon)C$, from which we obtain $p_C(y)\leq p_C(x)+\epsilon$. 
Since $\epsilon > 0$ was arbitrary, we have that $p_C(y)\leq p_C(x)$. 

Conversely, given any increasing continuous 
seminorm $p$ on $Z$, the set 
\begin{equation*}
C_p:=\{ x\in Z \mid p(x)<1 \}
\end{equation*}  
is absolutely convex. Moreover, it is solid 
since, if $x\in C_p$ with $0\leq y\leq x$, 
then $p(y)\leq p(x) < 1$, so $y\in C_p$.  \end{remark}

Given a family $\cC_0$ of absolutely convex and solid neighbourhoods of the
origin that generates the topology of $Z$, we denote by
$S_{\cC_0}(Z)=\{p_C\mid C\in\cC_0\}$, where $p_C$ is the Minkowski seminorm
associated to $C$ as in \eqref{e:seminorm}. 
The collection of all continuous 
increasing seminorms on $Z$ is denoted by $S(Z)$. 
As a consequence of Remark~\ref{r:increasing}, 
$S(Z)$ is in bijective correspondence 
with the family $\cC$ of all open, absolutely convex
and solid neighbourhoods of the origin. 
Note that $S(Z)$ is a directed set: given 
$p,q\in S(Z)$, consider $r:=p+q$. In fact, 
$S(Z)$ is a cone, i.e.\ it is closed under all 
finite linear combinations with positive coefficients.  

$Z$ is called an \emph{admissible space}, cf.\ \cite{Loynes1}, if, in addition 
to the axioms (a1)--(a4),
\begin{itemize}
\item[(a5)] The cone $Z_+$ is \emph{closed}, with respect to the
specified topology of $Z$.
\item[(a6)] The topology on $Z$ is complete.
\end{itemize}

Finally, if, in addition to the axioms (a1)--(a6), the space $Z$ satisfies
also the following axiom:
\begin{itemize}
\item[(a7)] With respect to the specified partial ordering,  any bounded 
monotone 
sequence is convergent.
\end{itemize}
then $Z$ is called a \emph{strongly admissible} space \cite{Loynes1}.

\begin{examples}\label{ex:as}
 (1) Any $C^*$-algebra $\cA$ is an admissible space, as well 
as any closed $*$-subspace $\cS$ of a $C^*$-algebra $\cA$, with the 
positive cone $\cS^+=\cA^+\cap\cS$ and all other operations (addition, 
multiplication with scalars, and involution) inherited from $\cA$.

(2) Any pre-$C^*$-algebra is a topologically ordered $*$-space. Any 
$*$-subspace $\cS$ of a pre-$C^*$-algebra $\cA$ is a topologically ordered 
$*$-space, with the positive cone $\cS^+=\cA^+\cap\cS$ and all other 
operations inherited from $\cA$.

(3) Any locally $C^*$-algebra, cf.\ \cite{Inoue}, \cite{Phillips}, 
(definition is recalled in Subsection \ref{ss:hmolcsa}) is an admissible 
space. 
In particular, any closed $*$-subspace $\cS$ of a locally $C^*$-algebra $\cA$, 
with the cone $\cS_+=\cA^+\cap\cS$ and all other operations inherited from 
$\cA$, is an admissible space. 

(4) Any locally pre-$C^*$-algebra is a topologically ordered $*$-space. Any
$*$-subspace $\cS$ of a locally pre-$C^*$-algebra is a topologically ordered
$*$-space, with $\cS^+=\cA^+\cap\cS$ and all other operations inherited from 
$\cA$.

(5) Let $\cH$ be an infinite dimensional separable Hilbert space and let
$\cC_1$ be 
the trace-class ideal, that is, the collection of all linear bounded operators
$A$ 
on $\cH$ such that $\tr(|A|)<\infty$. $\cC_1$ is a $*$-ideal of $\cB(\cH)$ and
complete under the 
norm $\|A\|_1=\tr(|A|)$. Positive elements in $\cC_1$ are defined in the sense 
of positivity in $\cB(\cH)$. In addition, the norm $\|\cdot\|_1$ is
increasing, since 
$0\leq A\leq B$ implies $\tr(A)\leq \tr(B)$, hence $\cC_1$ is a normed
admissible space. 

(6) Let $V$ be a complex Banach space and let
$V^\prime$ be its conjugate dual space. On the vector space 
$\mathcal{B}(V,V^\prime)$ of all bounded linear
operators $T\colon V\rightarrow V^\prime$, a natural notion of positive 
operator can be defined: $T$ is \emph{positive} if $(Tv)(v)\geq 0$ for all 
$v\in V$. Let
$\mathcal{B}(V,V^\prime)^+$ be the collection of all positive operators  and 
note that it is a strict cone that is closed with respect to the weak operator 
topology. The involution $*$ 
in $\mathcal{B}(V,V^\prime)$ is defined in the following way: for any
$T\in\mathcal{B}(V,V^\prime)$, $T^*=T^\prime|V$, that is, the restriction to 
$V$ of the dual operator 
$T^\prime\colon V^{\prime\prime}\rightarrow V^\prime$. With respect to 
the weak operator topology, the cone $\mathcal{B}(V,V^\prime)^+$, and the 
involution $*$ just defined, $\mathcal{B}(V,V^\prime)$ becomes an admissible 
space. 
See A.~Weron \cite{Weron}, as well as D.~Ga\c spar and P.~Ga\c spar 
\cite{GasparGaspar}.

(7) Let $X$ be a nonempty set and denote by $\cK(X)$ the collection of 
all complex valued kernels on $X$, that is, $\cK(X)=\{k\mid k\colon X\times 
X\rightarrow \CC\}$, considered as a complex vector space with the 
operations of addition and multiplication of scalars defined elementwise. 
An involution 
$*$ can be defined on $\cK(X)$ as follows: $k^*(x,y)=\overline{k(y,x)}$, 
for all 
$x,y\in X$ and all $k\in\cK(X)$. The cone $\cK(X)^+$ consists of all 
\emph{positive semidefinite} kernels, that is, those kernels $k\in\cK(X)$ with 
the property that, for any $n\in\NN$ and any $x_1,\ldots,x_n\in X$, the 
complex matrix $[k(x_i,x_j)]_{i,j=1}^n$ is positive semidefinite. Then $\cK(X)$
is an ordered $*$-space.

Further, consider the set $\cP_0(X)$ of all finite subsets of $X$. For each 
$A\in\cP_0(X)$, let $A=\{x_1,\ldots,x_n\}$ and define the seminorm 
$p_A\colon\cK(X)\ra \RR$ by
\begin{equation*} p_A(k)=\|[k(x_i,x_j)]_{i,j=1}^n\|,\quad k\in\cK(X),
\end{equation*} the norm being the operator norm of the $n\times n$ matrix 
$[k(x_i,x_j)]_{i,j=1}^n$. Since a reordering of the elements $x_1,\ldots,x_n$ 
produces a unitary equivalent matrix, the definition of $p_A$ does not depend 
on which order of the elements of the set $A$ is considered.
It is easy to see that each seminorm $p_A$ 
is increasing and that, with the locally convex topology defined by 
$\{p_A\}_{A\in\cP_0(X)}$, $\cK(X)$ is an admissible space.

(8) Let $\cA$ and $\cB$ be two $C^*$-algebras. Recall that, in this case, 
the specified strict cone $\cA^+$ linearly generates $\cA$. On 
$\cL(\cA,\cB)$, the vector space of all linear maps $\phi\colon \cA\ra\cB$, 
we define an involution: $\phi^*(a)=\phi(a^*)^*$, for all $a\in\cA$. A linear 
map $\phi\in\cL(\cA,\cB)$ is called positive if $\phi(\cA^+)\subseteq \cB^+$. It
is easy to see that $\cL(\cA,\cB)^+$, the collection of all positive maps from 
$\cL(\cA,\cB)$, is a cone, and that it is strict because $\cA^+$ 
linearly generates $\cA$. In addition, any $\phi\in\cL(\cA,\cB)^+$ 
is selfadjoint, 
again due to the fact that $\cA^+$ linearly generates $\cA$. Consequently,
$\cL(\cA,\cB)$ has a natural structure of ordered $*$-space.

On $\cL(\cA,\cB)$ we consider the collection of seminorms 
$\{p_a\}_{a\in\cA^+}$ defined by $p_a(\phi)=\|\phi(a)\|$, for all 
$\phi\in\cL(\cA,\cB)$. All these seminorms are increasing and the topology
generated by $\{p_a\}_{a\in\cA^+}$ is Hausdorff and complete. Consequently,
$\cL(\cA,\cB)$ is an admissible space.

With a slightly more involved topology, it can be shown that the same
conclusion holds for the case when $\cA$ and $\cB$ are locally $C^*$-algebras.

(9) Let $\{Z_\alpha\}_{\alpha\in A}$ be a family of admissible spaces such that,
for each $\alpha\in A$, $Z_\alpha^+$ is the specified strict cone of positive
elements in $Z_\alpha$, and the topology of $Z_\alpha$ is generated by the
family of increasing seminorms $\{p_{\alpha,j}\}_{j\in\cJ_\alpha}$. 
On the product space $Z=\prod_{\alpha\in A}Z_\alpha$ let 
$Z^+=\prod_{\alpha\in A}Z_\alpha^+$ and observe that $Z^+$ is a
strict cone. Letting the involution $*$ on $Z$ be defined elementwise, it
follows that $Z^+$ consists on selfadjoint elements only. In this way, $Z$
is an ordered $*$-space.

For each $\beta\in A$ and each $j\in\cJ_\beta$, let
\begin{equation}\label{e:qs}
q_j^{(\beta)}((z_\alpha)_{\alpha \in A})=p_j^{(\beta)}(z_\beta),\quad
(z_\alpha)_{\alpha\in A}\in Z.
\end{equation} It is easy to show that $q_j^{(\beta)}$ is an increasing
seminorm on $Z$ and that, with the topology generated by the family of
increasing seminorms $\{q_j^{(\beta)}\}_{\substack{\beta\in A\\ j\in\cJ_\beta}}$, 
$Z$ becomes an admissible space.
\end{examples}

\subsection{Vector Hilbert Spaces and Their Linear Operators.}\label{ss:vhs}
If $Z$ is a topologically ordered $*$-space, 
any VE-space $\cE$ over $Z$ can be made in a natural way
into a Hausdorff locally convex space by considering the topology $\tau_\cE$, 
the weakest topology on $\cE$ that makes the quadratic map
$Q: E\ni h\mapsto [h,h]\in Z$ continuous. More 
precisely, letting $\cC_0$ be a collection of open,  
absolutely convex and solid 
neighbourhoods of the origin in $Z$, that generates the topology 
of $Z$ as in axiom (a5), the collection of sets
\begin{equation}\label{e:ujex}D_C=\{x\in \cE\mid [x,x]\in C\},\quad C\in\cC_0,
\end{equation} is a topological base of open and absolutely convex 
neighbourhoods of the origin of $\cE$ that linearly generates $\tau_\cE$,
cf.\ \cite{Loynes1}. 
We are interested in explicitly 
defining the topology $\tau_\cE$ in terms of seminorms. 

\begin{lemma}\label{l:topology} Let $Z$ be a topologically ordered $*$-space
and $\cE$ a VE-space over $Z$.
\begin{itemize}
\item[(1)] $(\cE;\tau_\cE)$ is a Hausdorf locally convex space.
\item[(2)] For every continuous increasing seminorm $p$ on $Z$
\begin{equation}\label{e:qujeh}\tl{p}(h)=p([h,h])^{1/2},\quad h\in\cE,
\end{equation} is a continuous seminorm on $(\cE;\tau_\cE)$.
\item[(3)] Let $\{p_j\}_{j\in\cJ}$ be a family of increasing 
seminorms defining the topology of $Z$ 
as in axiom (a4$^\prime$). Then, with the definition \eqref{e:qujeh}, 
the family of seminorms $\{\tl{p}_j\}_{j\in\cJ}$ generates
$\tau_\cE$.
\item[(4)] The gramian $[\cdot,\cdot]\colon \cE\times \cE\ra Z$ 
is jointly continuous.
\end{itemize}
\end{lemma}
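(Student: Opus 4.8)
The plan is to establish (2) first, since the remaining three parts follow from it fairly directly, and then to deduce (3), (1), and (4) in that order. For (2), the homogeneity $\tl p(\lambda h)=|\lambda|\,\tl p(h)$ is immediate from $[\lambda h,\lambda h]=|\lambda|^2[h,h]$ together with the fact that $p$ is a seminorm. The substantive point is the triangle inequality, and here the absence of a Schwarz inequality on a general VE-space (as flagged in the introduction) forces an indirect argument. For $h,k\in\cE$ and \emph{real} scalars $s,t$, positivity of the gramian gives
\begin{equation*}
0\le[sh+tk,sh+tk]=s^2[h,h]+t^2[k,k]+st\bigl([h,k]+[k,h]\bigr).
\end{equation*}
Choosing $s=\lambda$ and $t=-\lambda^{-1}$ with $\lambda>0$ yields $[h,k]+[k,h]\le\lambda^2[h,h]+\lambda^{-2}[k,k]$, whence
\begin{equation*}
0\le[h+k,h+k]\le(1+\lambda^2)[h,h]+(1+\lambda^{-2})[k,k].
\end{equation*}
Applying the increasing seminorm $p$ (monotone on the cone, subadditive, positively homogeneous) and then minimising the resulting bound $(1+\lambda^2)p([h,h])+(1+\lambda^{-2})p([k,k])$ over $\lambda>0$ gives the optimal value $\bigl(p([h,h])^{1/2}+p([k,k])^{1/2}\bigr)^2=(\tl p(h)+\tl p(k))^2$, the boundary cases $p([h,h])=0$ or $p([k,k])=0$ being handled by letting $\lambda\to\infty$ or $\lambda\to0$. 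I expect this optimisation, together with the justification that $p$ may legitimately be applied across the order inequality, to be the main obstacle.

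Continuity of $\tl p$ on $(\cE;\tau_\cE)$ then comes for free: $\tl p=(p\circ Q)^{1/2}$ is the composition of the quadratic map $Q\colon\cE\to Z$, continuous by the very definition of $\tau_\cE$, with the continuous map $p$ and the continuous square root, so $\tl p$ is continuous. This finishes (2). For (3) I would use the correspondence of Remark~\ref{r:increasing} between increasing seminorms and absolutely convex solid neighbourhoods of the origin: taking the generating family $\cC_0=\{C_{p_j}\}_{j\in\cJ}$ with $C_{p_j}=\{z\in Z\mid p_j(z)<1\}$, the defining neighbourhoods \eqref{e:ujex} become
\begin{equation*}
D_{C_{p_j}}=\{x\in\cE\mid p_j([x,x])<1\}=\{x\in\cE\mid\tl p_j(x)<1\},
\end{equation*}
i.e.\ exactly the open unit balls of the $\tl p_j$. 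Since $\{D_C\}_{C\in\cC_0}$ linearly generates $\tau_\cE$, the family $\{\tl p_j\}_{j\in\cJ}$ generates $\tau_\cE$, which is (3).

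Part (1) is then a formal consequence: a topology generated by a family of seminorms is locally convex, and it is Hausdorff exactly when the seminorms separate points. If $\tl p_j(h)=0$ for all $j$, then $p_j([h,h])=0$ for all $j$; since $Z$ is Hausdorff the $\{p_j\}$ separate points, so $[h,h]=0$, and then (ve1) forces $h=0$. Hence $\{\tl p_j\}$ separates points and $\tau_\cE$ is Hausdorff, giving (1). Finally, for (4) I would invoke the polarisation formula \eqref{e:polar}, writing
\begin{equation*}
[x,y]=\tfrac14\sum_{k=0}^3\iac^k\,Q(x+\iac^k y),\quad x,y\in\cE.
\end{equation*}
Each map $(x,y)\mapsto x+\iac^k y$ is continuous from $\cE\times\cE$ to $\cE$ because $\cE$ is a topological vector space by (1), $Q$ is continuous by definition of $\tau_\cE$, and scalar multiplication is continuous on $Z$; thus each summand, and hence the finite sum $[\cdot,\cdot]$, is jointly continuous, which completes the proof.
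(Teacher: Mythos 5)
Your proof is correct, and for the heart of the lemma --- the triangle inequality for $\tl p$ in part (2) --- it takes a genuinely different route from the paper. The paper proceeds in two stages: it first shows only that $\tl p$ is a \emph{quasi} seminorm, with the weaker bound $\tl p(h+k)\le\sqrt2\,(\tl p(h)+\tl p(k))$ obtained from $0\le[h\pm k,h\pm k]\le 2([h,h]+[k,k])$; it then proves that the unit ball $U_{\tl p}=\{h\mid \tl p(h)<1\}$ is convex, using the inequality $[h,k]+[k,h]\le[h,h]+[k,k]$ to get $[th+(1-t)k,th+(1-t)k]\le t[h,h]+(1-t)[k,k]$, and finally invokes the general fact (Proposition IV.1.14 in \cite{Conway}) that $\tl p$ is the gauge of the open, balanced, absorbing, convex set $U_{\tl p}$, hence a seminorm. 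You instead derive the parametric inequality $[h+k,h+k]\le(1+\lambda^2)[h,h]+(1+\lambda^{-2})[k,k]$ and minimise over $\lambda>0$ to land exactly on $\bigl(p([h,h])^{1/2}+p([k,k])^{1/2}\bigr)^2$, which gives the triangle inequality in one shot, with no appeal to gauge theory; this is more elementary and self-contained, and it is the same $\inf_{\lambda>0}$ device the paper itself uses later in Corollary~\ref{c:4schwarz} and Lemma~\ref{l:schwarzforposop}, so it fits the paper's toolkit well. What the paper's detour buys is that the quasi-seminorm estimate and the convexity of $U_{\tl p}$ are each very short computations, with the analytic work outsourced to a standard textbook result. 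The step you flag as the main obstacle --- applying $p$ across the order inequality --- is in fact costless: axiom (a4$'$) says precisely that $0\le x\le y$ implies $p(x)\le p(y)$, and both sides of your inequality are selfadjoint with the left side in $Z^+$ by (ve1). Your treatments of (1), (3) and (4) coincide in substance with the paper's (which delegates (1) and (4) to Loynes and to the polarisation formula), except that you spell out the Hausdorff separation argument and the unit-ball correspondence of Remark~\ref{r:increasing} explicitly, which is harmless and arguably more complete.
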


Statements (1) and (4) are proven in Theorem~1 in \cite{Loynes1}. 
Statement (2) is 
claimed in Proposition 1.1.1 in \cite{Ciurdariu} but, unfortunately, the 
proof provided there is irremediably flawed, so we provide full details.

\begin{proof}[Proof of Lemma~\ref{l:topology}] We first prove that, if $p$
is a continuous and increasing seminorm on $Z$, $\tl{p}$ is a quasi 
seminorm on $\cE$. Indeed, for any $\lambda\in\CC$ and any $h\in\cE$
\begin{equation*} \tl{p}(\lambda h)=p([\lambda h,\lambda h])^{1/2}= |\lambda| 
p([h,h])^{1/2}=|\lambda|\tl{p}(h),
\end{equation*} hence $\tl{p}$ is positively homogeneous.

For arbitrary $h,k\in\cE$ we have
\begin{equation*}[h\pm k,h\pm k]=[h,h]+[k,k]\pm[h,k]\pm[k,h]\geq 0,
\end{equation*} in particular,
\begin{equation}\label{e:haska} [h,k]+[k,h]\leq [h,h]+[k,k].
\end{equation}
and
\begin{equation}\label{e:heka}
0\leq [h\pm k,h\pm k]\leq [h-k,h-k]+[h+k,h+k]=2([h,h]+[k,k]).
\end{equation}
Since $p$ is increasing, it follows that
\begin{align*} \tl{p}(h+k) &=\bigl(p([h+k,h+k])\bigr)^{1/2}\leq 
\sqrt{2}(p([h,h])+p([k,k])^{1/2}\\
&\leq \sqrt{2}\bigl(p([h,h])^{1/2}+p([k,k])^{1/2}\bigr)=
\sqrt{2}\bigl(\tl{p}(h)+\tl{p}(k)\bigr).
\end{align*}
This concludes the proof that $\tl{p}$ is a quasi seminorm.

Also, since $\tl{p}$ is the composition of the square root function 
$\sqrt{\phantom{t}}$, a homeomorphism of $\RR_+$ onto itself, 
with $p$ and the quadratic map $\cE\ni x\mapsto [x,x]\in Z$, clearly $\tl{p}$ is
continuous with respect to the topology $\tau_\cE$. This observation shows 
that, if $\{p_j\}_{j\in\cJ}$ is a family of increasing seminorms generating 
the topology of $Z$, then $\{\tl{p}_j\}_{j\in\cJ}$ is a family of quasi seminorms 
generating $\tau_\cE$. In particular, $(\cE;\tau_\cE)$ is a 
topological vector space.

We prove now that $\tl{p}$ satisfies the triangle inequality, hence it is a
seminorm. To see this, consider the unit quasi ball 
\begin{equation*} U_{\tl{p}}=\{h\in \cE\mid \tl{p}(h)<1\}.
\end{equation*} Since $\tl{p}$ is continuous, $U_{\tl{p}}$ is open, 
hence absorbing for each of its points. Since $\tl{p}$ is 
positively homogeneous, 
$U_{\tl{p}}$ is balanced. We prove that $U_{\tl{p}}$ is convex as well. 
Let $h,k\in U_{\tl{p}}$ and $0\leq t\leq 1$ arbitrary.
Then,
\begin{align*} 0 \leq [th+(1-t)k,th+(1-t)k] & =t^2[h,h]+(1-t)^2[k,k]
+t(1-t)\bigl([h,k]+[k,h]\bigr)\\ 
\intertext{and then using \eqref{e:haska},}
& \leq t^2[h,h]+(1-t)^2[k,k]+t(1-t)
\bigl([h,h]+[k,k]\bigr)\\ & = t[h,h]+(1-t)[k,k],
\end{align*} hence, since $p$ is increasing, it follows
\begin{equation*}\tl{p}(th+(1-t)k)=p\bigl([th+(1-t)k,th+(1-t)k]\bigr)^{1/2}\leq\bigl( t p([h,h])+(1-t)p([k,k])\bigr)^{1/2}<1,
\end{equation*} hence $th+(1-t)k\in U_{\tl{p}}$.

It is a routine exercise to show that $\tl{p}$ is the gauge of $U_{\tl{p}}$
\begin{equation*} \tl{p}(h)=\inf\{t>0\mid h\in tU_{\tl{p}}\},
\end{equation*} hence, by Proposition~IV.1.14 in \cite{Conway}, 
it follows that $\tl{p}$ is a seminorm.

Statement (4) is a consequence of the polarisation formula \eqref{e:polar}.
\end{proof}

From now on, any time we have a VE-space $\cE$ over a topologically 
ordered $*$-space $Z$, 
we consider on $\cE$ the topology $\tau_\cE$ defined as in 
Lemma~\ref{l:topology}. With respect to this topology, we call $\cE$ a 
\emph{topological VE-space} over $Z$. Denote 
\begin{equation}\label{e:tves}
S(\cE):=S_{\cC}(\cE)=\{ \tl{p}_C \mid C\in\cC \},\end{equation} 
where $\cC$ is the collection of all 
open, absolutely convex and solid neighbourhoods of the origin of $Z$ as in 
\eqref{e:ujex}. 
Note that $S(\cE)$ is directed, more precisely,
given $\tl{p}_C,\tl{p}_D\in S(\cE)$ consider 
$S(Z)\ni q:=p_C+p_D$ and define $\tl{q}(h):=q([h,h]_{\cE})^{1/2}$. 
Also note that $S(\cE)$ is closed under positive scalar multiplication. 

The following corollary is a first surrogate of a Schwarz type inequality.
\begin{corollary}\label{c:4schwarz}
Let $\cE$ be a topological VE-space over the topologically ordered space $Z$ and
$p\in S(Z)$. Then
\begin{equation*}p([e,f])\leq 4\, p([e,e])^{1/2}\,p([f,f])^{1/2},\quad e,f\in \cE.
\end{equation*}
\end{corollary}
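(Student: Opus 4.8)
The plan is to build the estimate from the polarisation formula \eqref{e:polar} together with the fact, established in Lemma~\ref{l:topology}, that each $\tl{p}(h)=p([h,h])^{1/2}$ is a genuine seminorm on $\cE$; the sharp factor $4$ will then be extracted by a homogeneity (scaling) argument, which compensates for the absence of a true Schwarz inequality in this setting. Note first that, since $p\in S(Z)$ is defined on all of $Z$, the quantity $p([e,f])$ makes sense even though $[e,f]$ need not be selfadjoint.

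First I would derive a crude bound. Writing the polarisation formula \eqref{e:polar} for the pair $(e,f)$ and applying the seminorm $p$ to both sides, the triangle inequality for $p$ and $|\iac^k|=1$ give
\begin{equation*}
4\,p([e,f])\leq \sum_{k=0}^3 p\bigl([e+\iac^k f,e+\iac^k f]\bigr)
=\sum_{k=0}^3 \tl{p}(e+\iac^k f)^2 .
\end{equation*}
Since $\tl{p}$ is a seminorm by Lemma~\ref{l:topology} and $\tl{p}(\iac^k f)=\tl{p}(f)$, each summand is at most $(\tl{p}(e)+\tl{p}(f))^2$, whence
\begin{equation*}
p([e,f])\leq \bigl(\tl{p}(e)+\tl{p}(f)\bigr)^2
=\bigl(p([e,e])^{1/2}+p([f,f])^{1/2}\bigr)^2 .
\end{equation*}

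Next I would sharpen the constant by scaling. Because the gramian is conjugate linear in the first and linear in the second variable (axioms (ve2), (ve3)) and the scalars are real, for any $t>0$ we have $[te,t^{-1}f]=[e,f]$, hence $p([te,t^{-1}f])=p([e,f])$, while $\tl{p}(te)=t\,\tl{p}(e)$ and $\tl{p}(t^{-1}f)=t^{-1}\tl{p}(f)$. Applying the crude bound to the pair $(te,t^{-1}f)$ therefore yields
\begin{equation*}
p([e,f])\leq \bigl(t\,\tl{p}(e)+t^{-1}\tl{p}(f)\bigr)^2,\quad t>0 .
\end{equation*}
Minimising the right-hand side over $t>0$ (the optimal choice is $t=(\tl{p}(f)/\tl{p}(e))^{1/2}$ when $\tl{p}(e)\neq 0$, by the arithmetic--geometric mean inequality) produces the value $4\,\tl{p}(e)\,\tl{p}(f)$, which is exactly the asserted bound $4\,p([e,e])^{1/2}p([f,f])^{1/2}$.

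The substance of the argument is this scaling trick, and I do not expect a serious obstacle: the only point requiring care is the degenerate case $\tl{p}(e)=p([e,e])^{1/2}=0$ (and symmetrically $\tl{p}(f)=0$), where the last displayed inequality reads $p([e,f])\leq t^{-2}\,p([f,f])$; letting $t\to\infty$ forces $p([e,f])=0$, in agreement with the right-hand side. The genuine conceptual content lies entirely in Lemma~\ref{l:topology}, namely the upgrade of $\tl{p}$ from a quasi seminorm to an honest seminorm, since the triangle inequality for $\tl{p}$ is what makes the crude bound available as a usable surrogate for the missing Schwarz inequality.
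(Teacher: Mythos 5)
Your proof is correct, and its skeleton --- polarisation, a crude additive bound, then the scaling substitution $e\mapsto te$, $f\mapsto t^{-1}f$ with an infimum over $t>0$ --- is exactly the paper's; the difference is in how the crude bound is obtained, and it is worth noting. You estimate each term $p([e+\iac^k f,e+\iac^k f])=\tl{p}(e+\iac^k f)^2$ by $(\tl{p}(e)+\tl{p}(f))^2$ via the triangle inequality for $\tl{p}$, i.e.\ you invoke the full strength of Lemma~\ref{l:topology}, whose proof (the upgrade of $\tl{p}$ from a quasi seminorm to a genuine seminorm) is precisely the delicate convexity/gauge argument of that lemma. The paper instead bounds the same term by $2\bigl(p([e,e])+p([f,f])\bigr)$ directly from the order inequality $0\leq [h+k,h+k]\leq 2([h,h]+[k,k])$ of \eqref{e:heka} together with the fact that $p$ is increasing --- an elementary step at the level of the ordering axioms that never needs subadditivity of $\tl{p}$. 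So the paper's route is logically lighter and more self-contained, while yours is equally valid (there is no circularity, since Lemma~\ref{l:topology} is proved independently of the corollary) and your intermediate bound is in fact marginally sharper, because $(\tl{p}(e)+\tl{p}(f))^2\leq 2\bigl(p([e,e])+p([f,f])\bigr)$; after scaling, both collapse to the same constant $4$. Your explicit treatment of the degenerate case $\tl{p}(e)=0$ is fine, though strictly speaking unnecessary: the identity $\inf_{t>0}(ta+t^{-1}b)=2\sqrt{ab}$ holds for $a,b\geq 0$ including the cases where $a$ or $b$ vanishes, which is how the paper's infimum step covers it silently.
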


\begin{proof} Firstly, for any $h,k\in \cE$, from \eqref{e:heka} and taking
  into account that $p\in S(Z)$ is increasing, it follows that
\begin{equation}\label{e:pehapka}p([h+k,h+k])\leq 2(p([h,h])+p([k,k]).
\end{equation}

Let now $e,f\in\cE$ be arbitrary. By the polarisation formula 
\eqref{e:polar} and \eqref{e:pehapka}, we have
\begin{align*}p([e,f]) & 
= p\bigl(\frac{1}{4}\sum_{k=0}^3 \iac^k [e+\iac^kf,e+\iac^kf]\bigr) \\
& \leq \frac{1}{4} \sum_{k=0}^3 p( [e+\iac^kf,e+\iac^kf] ) \\
& = \frac{1}{4} \sum_{k=0}^3 2\bigl(p([e,e])+p([\iac^kf,\iac^kf])\bigr) \\
& = 2 \bigl(p([e,e])+p([f,f])\bigr).
\end{align*}
Letting $\lambda>0$ arbitrary and changing  $e$ with
$\sqrt{\lambda} e$ and $f$ with $f/\sqrt{\lambda}$ in the previous inequality, we get
\begin{equation*}
p([e,f])\leq 2\bigl(\lambda p([e,e])+\lambda^{-1}p([f,f])\bigr) 
\end{equation*} hence, since the left hand side does not depend on $\lambda$,
it follows
\begin{equation*}p([e,f])\leq \inf_{\lambda>0}2\bigl(\lambda
  p([e,e])+\lambda^{-1}p([f,f])\bigr)  
=4\, p([e,e])^{1/2}\, p([f,f])^{1/2},
\end{equation*}
which is the required inequality.
\end{proof}

If $Z$ is an admissible space and $\cE$ is a topological VE-space whose
locally convex topology is complete, then $\cE$ is called a \emph{VH-space} 
(Vector Hilbert space). Any topological 
VE-space $\cE$ on an admissible space $Z$ 
can be embedded as a dense subspace of a VH-space $\cH$ over $Z$, 
uniquely determined up to an isomorphism, cf.\ Theorem 2 in
\cite{Loynes1}.

\begin{examples}\label{e:evhs}
(1) Any Hilbert module $\cH$ over a $C^*$-algebra $\cA$, e.g.\ see \cite{Lance},
  \cite{ManuilovTroitsky}, can be viewed as a VH-space $\cH$ over the
  admissible space $\cA$, see Example~\ref{ex:as}.(1). In particular, any
  closed subspace $\cS$ of $\cH$ is a VH-space over the admissible space
  $\cA$.

(2) Any Hilbert module $\cH$ over a locally $C^*$-algebra $\cA$, e.g.\ see
\cite{Inoue}, \cite{Phillips}, can be viewed as a VH-space $\cH$ over the
admissible space $\cA$, see Example~\ref{ex:as}.(2). In particular, any
closed subspace $\cS$ of $\cH$ is a VH-space over the admissible space $\cA$.

(3) With notation as in Example~\ref{ex:as}.(5), consider $\cC_2$ the ideal of Hilbert-Schmidt
operators on $\cH$. Then $[A,B]=A^*B$, for all $A,B\in\cC_2$, is a gramian with values in the 
admissible space $\cC_1$ with respect to which $\cC_2$ becomes a VH-space. Observe that, 
since $\cC_1$ is a normed admissible space, by Lemma~\ref{l:topology} it follows that
$\cC_2$ is a normed VH-space, with norm 
$\|A\|_2=\tr(|A|^2)^{1/2}$, for all $A\in\cC_2$. More abstract versions of this example have been
considered by Saworotnow in \cite{Saworotnow}.

(4) Let $\{\cE_\alpha\}_{\alpha\in A}$ be a family of VH-spaces such that, for each
$\alpha\in A$, $\cE_\alpha$ is a VH-space over the admissible space
$Z_\alpha$. As in Example~\ref{ex:as}, consider the admissible space 
$Z=\prod_{\alpha\in A}Z_\alpha$ and the vector space $\cE=\prod_{\alpha\in A}
\cE_\alpha$ on which we define
\begin{equation*}[(e_\alpha)_{\alpha\in A},(f_\alpha)_{\alpha\in
      A}]=([e_\alpha,f_\alpha])_{\alpha\in A}\in Z,\quad 
(e_\alpha)_{\alpha\in A},(f_\alpha)_{\alpha\in A}\in \cE. 
\end{equation*} Then $\cE$ is a VE-space over $Z$. On $Z$ consider
the topology generated by the family of
increasing seminorms $\{q_j^{(\beta)}\}_{\substack{\beta\in
    A\\ j\in\cJ_\beta}}$ defined at \eqref{e:qs}, with respect to which 
$Z$ becomes an admissible space. For each $\beta\in A$ and each
$j\in\cJ_\beta$, in view of Lemma~\ref{l:topology}, consider the seminorm
\begin{equation*} \tl{q}_j^{(\beta)}((e_\alpha)_{\alpha\in
    A})=p_j^{(\beta)}([e_\alpha,e_\alpha])^{1/2}, \quad (e_\alpha)_{\alpha\in A}\in\cE.
\end{equation*}
The family of seminorms $\{\tl{q}_j^{(\beta)}\}_{\substack{\beta\in
    A\\ j\in\cJ_\beta}}$
generates on $\cE$ the topology with respect to which 
it is a VH-space over $Z$.

(5) Let $Z$ be an admissible space and $\cE_1,\ldots,\cE_n$ VH-spaces over
$Z$. On  $\cE=\prod_{j=1}^n \cE_j$ define
\begin{equation}
[(e_j)_{j=1}^n,(f_j)_{j=1}^n]_\cE=\sum_{j=1}^n [e_j,f_j]_{\cE_j},\quad
(e_j)_{j=1}^n,(f_j)_{j=1}^n\in\cE, 
\end{equation}
and observe that $(\cE;[\cdot,\cdot]_{\cE})$ is a VE-space over $Z$. In addition, for any $p\in S(Z)$
letting $\widetilde p\colon \cE\ra\RR_+$ be defined as in \eqref{e:qujeh}, 
$\widetilde p(e)=p([e,e]_\cE)^{1/2}$, for all $e\in\cE$, it is easy to see that $\cE$ 
is a VH-space over $Z$. It is clear that we can 
denote this VH-space by $\bigoplus_{j=1}^n \cE_j$ and call it the 
\emph{direct sum VH-space} of the VH-spaces $\cE_1,\ldots,\cE_n$.

(6) Let $\cH$ be a Hilbert space and $\cE$ a
VH-space over the admissible space $Z$. On the algebraic tensor product
$\cH\otimes \cE$ define a gramian by
\begin{equation*} [h\otimes e,l\otimes f]_{\cH\otimes\cE}
=\langle h,l\rangle_\cH [e,f]_\cE\in Z,\quad
  h,l\in \cH,\ e,f\in \cE,
\end{equation*} and then extend it to $\cH\otimes \cE$ by linearity. It can
be proven that, in this way, $\cH\otimes \cE$ is a VE-space over $Z$. Since
$Z$ is an admissible space, $\cH\otimes\cE$ can be topologised as in
Lemma~\ref{l:topology} and then completed to a VH-space
$\cH\widetilde\otimes\cE$ over $Z$.

If $\cH=\CC^n$ for some $n\in\NN$ then, with notation as in item (5), it is clear that 
$\CC^n\otimes\cE$ is isomorphic with $\bigoplus_{j=1}^n \cE_j$, with $\cE_j=\cE$ for all 
$j=1,\ldots,n$.
\end{examples}

\begin{remark}\label{r:continuity}
If $\cE$ and $\cF$ are two VH-spaces over the 
same admissible space $Z$, 
by $\cL_{c}(\cE,\cF)$ we denote 
the space of all continuous operators 
from $\cE$ to $\cF$. Let $\cC_0$ be a system of open  and absolutely convex 
neighbourhoods of the origin defining the topology of $Z$.
Since $S(\cE)$ is 
directed and it is 
closed under positive scalar multiplication, the continuity of a 
linear operator $T\in\cL(\cE,\cF)$ is equivalent with: 
for any $p\in S_{\cC_0}(\cF)$, there exists $q\in S(\cE)$ 
and a constant $c\geq 0$ such that 
$p(Th)\leq c\, q(h)$ for all $h\in\cE$. We will 
use this fact frequently in this article.\end{remark}

For $\cE$ and $\cF$ two VH-spaces over the same admissible space $Z$, we 
denote by $\cL_{c}^*(\cE,\cF)$ the subspace of $\cL^*(\cE,\cF)$ consisting 
of all continuous and continuously adjointable operators. 
Note that $\cL_{c}^*(\cE)=\cL_{c}^*(\cE,\cE)$ 
is an ordered $*$-subalgebra of $\cL^*(\cE)$. 

\begin{lemma}\label{l:redunt}
Let $\cH$ be a topological VE-space over 
the topologically ordered $*$-space $Z$. Let $T\in\cL_{c}^*(\cH)$ be 
a positive operator and $p\in S(Z)$. Then there exist 
$q\in S(Z)$ and $c(T,p) \geq 0$ such that 
\begin{equation*}
p([Th,h]_{\cH})\leq c(T,p)\, q([h,h]_{\cH}),\quad h\in\cH.
\end{equation*}
\end{lemma}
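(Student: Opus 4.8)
The plan is to combine the Schwarz-type surrogate of Corollary~\ref{c:4schwarz} with the continuity of $T$, and then to merge the two seminorms that appear by using that $S(Z)$ is directed (indeed a cone). It is worth noting that positivity of $T$ is not actually needed for this estimate: only the continuity of $T$ and the gramian of $\cH$ enter. I would organise the argument in three short steps.

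First, I would apply Corollary~\ref{c:4schwarz} to the pair $e=Th$ and $f=h$ in the topological VE-space $\cH$, with the given $p\in S(Z)$, obtaining
\[
p([Th,h]_\cH)\le 4\,p([Th,Th]_\cH)^{1/2}\,p([h,h]_\cH)^{1/2},\qquad h\in\cH.
\]
This already isolates the two quantities I need to control, at the cost of a product of square roots of seminorms.

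Second, I would invoke the continuity of $T$. By Lemma~\ref{l:topology}, $\tl p:=p([\cdot,\cdot]_\cH)^{1/2}$ is a continuous seminorm on $\cH$, and the topology $\tau_\cH$ is generated by the directed family $S(\cH)=\{\tl r\mid r\in S(Z)\}$ (see the discussion following Lemma~\ref{l:topology}, and Remark~\ref{r:continuity}). Since $T$ is continuous, the seminorm $h\mapsto \tl p(Th)=p([Th,Th]_\cH)^{1/2}$ is continuous on $\cH$, so there exist $q_0\in S(Z)$ and $c_0\ge 0$ with
\[
p([Th,Th]_\cH)^{1/2}\le c_0\,q_0([h,h]_\cH)^{1/2},\qquad h\in\cH.
\]

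Finally, I would merge $p$ and $q_0$. Setting $q:=p+q_0$, which lies in $S(Z)$ because $S(Z)$ is a cone, we have $p\le q$ and $q_0\le q$ pointwise on $Z$, hence $p([h,h]_\cH)^{1/2}\le q([h,h]_\cH)^{1/2}$ and $q_0([h,h]_\cH)^{1/2}\le q([h,h]_\cH)^{1/2}$. Substituting the second display into the first and estimating both factors by $q([h,h]_\cH)^{1/2}$ gives
\[
p([Th,h]_\cH)\le 4c_0\,q_0([h,h]_\cH)^{1/2}\,p([h,h]_\cH)^{1/2}\le 4c_0\,q([h,h]_\cH),
\]
so the conclusion holds with $c(T,p):=4c_0$ and the above $q$. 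The only mild subtlety—the ``hard part,'' such as it is—is the product of square roots of two a priori different seminorms produced by the first two steps; this is resolved precisely by the directedness of $S(Z)$, which lets me dominate both $p$ and $q_0$ by the single increasing seminorm $q=p+q_0$.
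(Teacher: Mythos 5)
Your proof is correct, but it follows a route that differs from the one the paper presents --- in fact, it is precisely the alternative that the authors flag in Remark~\ref{r:redunt}, where they note that the lemma ``can be obtained as a consequence of the Schwarz type inequality as in Corollary~\ref{c:4schwarz} and the fact that $S(Z)$ is directed, but this is more involved than the presented proof.'' The paper's own argument never invokes Corollary~\ref{c:4schwarz}: it expands $[Th-h,Th-h]_\cH\geq 0$, uses the positivity (hence selfadjointness) of $T$ to obtain $0\leq 2[Th,h]_\cH\leq [Th,Th]_\cH+[h,h]_\cH$, applies the increasing seminorm $p$ to this ordered inequality, and then finishes with the continuity of $T$ via Remark~\ref{r:continuity} --- the same two closing ingredients (continuity of $T$ expressed through the generating seminorms, and directedness of $S(Z)$ to merge $p$ with the seminorm produced by continuity) that you use. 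What your route buys is a genuine, if mild, strengthening: as you observe, positivity of $T$ plays no role in your argument, so the estimate holds for any continuous $T$, whereas the paper's proof needs $[Th,h]_\cH\geq 0$ in order to apply the increasing property of $p$. What the paper's route buys is brevity and a better constant: it stays entirely within elementary gramian algebra (one line of expansion) and avoids both the polarisation-based machinery behind Corollary~\ref{c:4schwarz} and the factor $4$ that it drags into your bound $c(T,p)=4c_0$. All individual steps in your write-up check out: the application of Corollary~\ref{c:4schwarz} with $e=Th$, $f=h$ is legitimate, the domination $\tl p(T\cdot)\leq c_0\,\tl q_0$ is the standard characterisation of continuity with respect to a directed generating family of seminorms, and $q:=p+q_0$ lies in $S(Z)$ and dominates both $p$ and $q_0$ pointwise, so the final chain of inequalities is valid.
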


\begin{proof}
To a certain extent, we use an argument in \cite{Loynes1}.
From
\begin{equation*}
[Th-h,Th-h]_{\cH}=[Th,Th]_{\cH}-2[Th,h]_{\cH}+[h,h]_{\cH}\geq0,
\end{equation*}
and taking into account that $T$ is positive, we obtain
\begin{equation*}0\leq 2[Th,h]_\cH\leq [Th,Th]_\cH+[h,h]_\cH.
\end{equation*}
From here, for any seminorm $p\in S(Z)$, using that $p$ is increasing, $T$ 
is continuous, and  Remark~\ref{r:continuity}, it follows that there exist
$q\in S(Z)$ and a constant $c(T,p)\geq 0$ such that, for all $h\in\cH$ we have
\begin{equation*}
p([Th,h]_{\cH})\leq \frac{1}{2}\bigl(p([Th,Th]_{\cH})+p([h,h]_{\cH})\bigr)
\leq c(T,p)\, q([h,h]_{\cH}).\qedhere
\end{equation*}
\end{proof} 

\begin{remark}\label{r:redunt} The previous lemma can be obtained as a
  consequence of the Schwarz type inequality as in Corollary~\ref{c:4schwarz}
  and the fact that $S(Z)$ is directed, but this is more involved than the
  presented proof.
\end{remark}

Let $\cH_1$ and $\cH_2$ be two VH spaces 
over the same admissible space $Z$, with their
family of seminorms $S(\cH_1)=\{\tl{p}_{\cH_1}\mid p\in S(Z)\}$ and, 
respectively, $S(\cH_2)=\{\tl{p}_{\cH_2}\mid p\in S(Z)\}$. 
Then the \emph{strict topology} on 
$\cL^*(\cH_1,\cH_2)$ is defined by the seminorms
$T \mapsto \tl{p}_{\cH_2}(T\xi)$ for 
$\tl{p}_{\cH_2} \in S(\cH_2)$, 
$\xi\in\cH_1$ and 
$T\mapsto \tl{p}_{\cH_1}(T^*\eta)$ for 
$\tl{p}_{\cH_1} \in S(\cH_1)$, 
$\eta \in \cH_2$, for all  
$p\in S(Z)$ with 
the seminorms $\tl{p}_{\cH_1}$ on $\cH_1$ and $\tl{p}_{\cH_2}$ on 
$\cH_2$ defined at \eqref{e:qujeh}. 
Equivalently, we can use all $p\in S_{\cC_0}(Z)$, where 
$\cC_0$ is a collection of open, absolutely convex, and solid neighbourhoods of
$0$ and that generates the topology of $Z$, as in Subsection \ref{ss:as}. 

\begin{lemma}\label{l:compstrict}
Let $\cH_1$ and $\cH_2$ be two VH-spaces over 
the same admissible space $Z$. Then 
$\cL^*(\cH_1,\cH_2)$ with the strict topology is complete . 
\end{lemma}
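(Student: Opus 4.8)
The plan is to prove completeness directly by constructing the limit of a strict Cauchy net pointwise, relying on the completeness of the two VH-spaces and on the joint continuity of the gramian established in Lemma~\ref{l:topology}.(4). First I would fix a net $(T_i)_{i\in I}$ in $\cL^*(\cH_1,\cH_2)$ that is Cauchy for the strict topology. By the very definition of the defining seminorms, this means that for every $\xi\in\cH_1$ the net $(T_i\xi)_{i\in I}$ is Cauchy in $\cH_2$ (with respect to all the seminorms $\tl{p}_{\cH_2}$, $p\in S(Z)$), and for every $\eta\in\cH_2$ the net $(T_i^*\eta)_{i\in I}$ is Cauchy in $\cH_1$. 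Since $\cH_1$ and $\cH_2$ are complete, these limits exist, and I use them to define
\[
T\colon\cH_1\ra\cH_2,\quad T\xi=\lim_i T_i\xi,\qquad S\colon\cH_2\ra\cH_1,\quad S\eta=\lim_i T_i^*\eta.
\]

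Next I would check that $T$ and $S$ are linear. Because addition and scalar multiplication are continuous in the topological vector spaces $\cH_1$ and $\cH_2$, the pointwise limits respect linear combinations, so $T\in\cL(\cH_1,\cH_2)$ and $S\in\cL(\cH_2,\cH_1)$. To identify $S$ as the adjoint of $T$, I would start from the identity $[T_i\xi,\eta]_{\cH_2}=[\xi,T_i^*\eta]_{\cH_1}$, which holds for every index $i$ and all $\xi\in\cH_1$, $\eta\in\cH_2$, and pass to the limit in $Z$. By the joint continuity of the gramian the left-hand side converges to $[T\xi,\eta]_{\cH_2}$ and the right-hand side to $[\xi,S\eta]_{\cH_1}$; since $Z$ is Hausdorff these limits must coincide, yielding $[T\xi,\eta]_{\cH_2}=[\xi,S\eta]_{\cH_1}$ for all $\xi,\eta$. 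Thus $T$ is adjointable with $T^*=S$, so $T\in\cL^*(\cH_1,\cH_2)$.

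Finally I would verify strict convergence $T_i\to T$. By construction $T_i\xi\to T\xi$ in $\cH_2$ for every $\xi\in\cH_1$, which controls the first family of defining seminorms, while $T_i^*\eta\to S\eta=T^*\eta$ in $\cH_1$ for every $\eta\in\cH_2$, which controls the second family; together these say precisely that $T_i\to T$ in the strict topology. I do not expect a genuine obstacle, since each step reduces either to the completeness of $\cH_1$ and $\cH_2$ or to the joint continuity of the gramian, both of which are already available. The only delicate point is the passage to the limit in the adjoint identity: here one must invoke the joint continuity of $[\cdot,\cdot]$ together with the uniqueness of limits in the Hausdorff space $Z$, rather than any naive interchange of limit and gramian, and it is exactly this joint continuity of the $Z$-valued inner product that makes the argument work at the level of generality of VH-spaces.
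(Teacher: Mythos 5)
Your proposal is correct and follows essentially the same route as the paper's proof: define $T$ and $T^*$ as pointwise limits using the completeness of $\cH_1$ and $\cH_2$, identify the adjoint by passing to the limit in $[T_i\xi,\eta]_{\cH_2}=[\xi,T_i^*\eta]_{\cH_1}$ via the continuity of the gramians (Lemma~\ref{l:topology}), and then observe that strict convergence $T_i\to T$ holds by construction. The only differences are cosmetic: you make explicit the linearity check and the appeal to the Hausdorff property of $Z$, which the paper leaves implicit.
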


\begin{proof}
Let $(T_i)_i$ be a Cauchy net in $\cL^*(\cH_1,\cH_2)$ with respect to the
strict topology. Then, $(T_{i}\xi)_i$ is a Cauchy net in 
$\cH_2$ for all $\xi\in\cH_1$ and 
$(T_{i}^*\eta)_i$ is a Cauchy net in $\cH_1$ for all 
$\eta \in \cH_1$, since 
they are Cauchy with respect to 
all seminorms in $S(\cH_2)$ and 
$S(\cH_1)$, respectively. Since 
$\cH_1$ and $\cH_2$ are complete, we have that 
$T_{i}\xi \xrightarrow[i]{} x_{\xi}$ and 
$T_{i}^*\eta \xrightarrow[i]{} y_{\eta}$ 
for some $x_{\xi} \in \cH_2$ and $y_{\eta} \in \cH_1$. 

Define the linear operators $T:\cH_1 \ra \cH_2$ 
by $T\xi=x_{\xi}$ and $R:\cH_2 \ra \cH_1$ 
by $R\eta=y_{\eta}$. Then, by the continuity of the gramians, see
Lemma~\ref{l:topology}, we have 
\begin{equation*}
[ T\xi,\eta ]_{\cH_2}=
\lim_{i}[ T_{i}\xi,\eta ]_{\cH_2}=
\lim_{i}[ \xi,T_{i}^*\eta ]_{\cH_1}=
[ \xi,R\eta ]_{\cH_1}.
\end{equation*}
Therefore, $T$ is adjointable with $T^*=R$, and $T_i \xrightarrow[i]{} T$ 
in the strict topology of $\cL^*(\cH_1,\cH_2)$. 
\end{proof}

A subspace $\cM$ of a VH-space $\cH$ is \emph{orthocomplemented}, or 
\emph{accessible} \cite{Loynes1}, 
if every element $h \in \cH$ can be written as 
$h=g+k$ where $g$ 
is in $\cM$ and $k$ is such that $[l,k]=0$ for all $l \in \cM$, 
that is, $k$ is in the 
\emph{orthogonal companion} $\cM^\perp$ of $\cM$. Observe that if such a 
decomposition exists it is unique and hence the \emph{orthogonal projection} 
$P_\cM$ onto $\cM$ can be defined by $P_\cM h=g$.
Any orthogonal projection $P$ is selfadjoint and idempotent, 
in particular we have 
$[Ph,k]=[Ph,Pk]$ for all $j,k\in \cH$, hence $P$ is positive and contractive,
in the sense $[Ph,Ph]\leq [h,h]$ for all $h\in\cH$, hence $P$ is continuous. 
Conversely, any selfadjoint idempotent operator is an orthogonal 
projection onto its range subspace. Any orthocomplemented subspace is closed.

\section{Positive Semidefinite Kernels with Values Adjointable
  Operators}\label{s:pskvao} 

Our main result is Theorem~\ref{t:vhinvkolmo2}  
that provides necessary and sufficient conditions
for a positive semidefinite kernel with values adjointable operators and
invariant under an action of a $*$-semigroup to give rise to a
$*$-representation of the given $*$-semigroup on a VH-space. We first provide
some preliminary results on positive semidefinite kernels with values
adjointable operators in a VE-space, cf.\ \cite{AyGheondea}.

\subsection{Kernels with Values Adjointable Operators.}\label{ss:kvao}
Let $X$ be a nonempty set and let $\cH$ be a VE-space over the ordered 
$*$-space $Z$. 
A map $\fk\colon X\times X\ra \cL(\cH)$ is called a \emph{kernel} on $X$ and 
valued in 
$\cL(\cH)$. In case the kernel $\fk$ has all its values in $\cL^*(\cH)$, 
an \emph{adjoint} kernel 
$\fk^*\colon X\times X\ra\cL^*(\cH)$ can be associated by 
$\fk^*(x,y)=\fk(y,x)^*$ for all 
$x,y\in X$. The kernel $\fk$ is called \emph{Hermitian} if $\fk^*=\fk$.

Let $\cF=\cF(X;\cH)$ denote the complex vector space of all functions 
$f\colon X\ra \cH$ and let $\cF_0=\cF_0(X;\cH)$ be its subspace of those 
functions having 
finite support. A pairing $[\cdot,\cdot]_{\cF_0}\colon \cF_0\times \cF_0\ra Z$ 
can be defined by
\begin{equation}\label{e:prodge}
[g,h]_{\cF_0}=\sum_{y\in X} [g(y),h(y)]_\cH,\quad g,h\in\cF_0.
\end{equation} This pairing is clearly a $Z$-gramian on $\cF_0$,  hence 
$(\cF_0;[\cdot,\cdot]_{\cF_0})$ is a VE-space. 

Another pairing $[\cdot,\cdot]_\fk$ can be defined on $\cF_0$ by
\begin{equation}\label{e:prodka} [g,h]_\fk=
\sum_{x,y\in X}[\fk(y,x)g(x),h(y)]_\cH,
\quad g,h\in \cF_0.\end{equation} In general, the pairing $[\cdot,\cdot]_\fk$ 
is linear in the second variable and conjugate linear in the first variable. 
If, in addition, 
$\fk=\fk^*$ then the pairing $[\cdot,\cdot]_\fk$ is Hermitian as well, that is,
\begin{equation*} [g,h]_\fk=[h,g]_\fk^*,\quad g,h\in\cF_0.
\end{equation*} 

A \emph{convolution operator} $K\colon\cF_0\ra\cF$ can be associated to the 
kernel $\fk$ by
\begin{equation}\label{e:convop} (Kg)(y)=\sum_{x\in X} \fk(y,x)g(x),\quad
  g\in\cF_0, 
\end{equation} and it is easy to see that $K$ is a linear operator.
There is a natural relation between the pairing $[\cdot,\cdot]_\fk$ 
and the convolution operator $K$ given by
\begin{equation*}[g,h]_\fk=[Kg,h]_{\cF_0},\quad g,h\in\cF_0.
\end{equation*} 

Given $n\in\NN$, the kernel $k$ is called \emph{$n$-positive} if for any 
$x_1,x_2,\ldots,x_n\in X$ and any $h_1,h_2,\ldots,h_n\in\cH$ we have
\begin{equation}\label{e:npos} \sum_{i,j=1}^n [\fk(x_i,x_j)h_j,h_i]_\cH\geq 0.
\end{equation}
 The kernel $k$ is called \emph{positive semidefinite} 
(or \emph{of positive type}) if it is $n$-positive for all natural numbers $n$.

 \begin{lemma}[Lemma~3.1 from \cite{Gheondea}]\label{L:twopos} 
Assume that the kernel $\fk\colon X\times 
X\ra\cL^*(\cH)$ is $2$-positive. Then:
 
 \nr{1} $\fk$ is Hermitian.
 
 \nr{2} If, for some $x\in X$, we have $\fk(x,x)=0$, then $\fk(x,y)=0$ for all 
$y\in X$.

\nr{3} There exists a unique decomposition $X=X_0\cup X_1$, such that
$X_0\cap X_1=\emptyset$, $\fk(x,y)=0$ for all $x,y\in X_0$ and $\fk(x,x)\neq
0$ for all $x\in X_1$.
\end{lemma}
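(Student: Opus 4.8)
The plan is to extract everything from the single inequality furnished by $2$-positivity, namely that for all $x,y\in X$ and all $h_1,h_2\in\cH$ the element
\[
S=[\fk(x,x)h_1,h_1]_\cH+[\fk(x,y)h_2,h_1]_\cH+[\fk(y,x)h_1,h_2]_\cH+[\fk(y,y)h_2,h_2]_\cH
\]
belongs to $Z^+$, and in particular is selfadjoint since, by (a2), $Z^+$ consists of selfadjoint elements only.

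For (1), I would first specialise $h_2=0$ (respectively $h_1=0$) to get $[\fk(x,x)h_1,h_1]_\cH\ge 0$ for all $h_1$, so that each diagonal value $\fk(x,x)$ is a positive, hence selfadjoint, operator; thus the two diagonal terms of $S$ are selfadjoint. Next I would replace $h_1$ by $\lambda h_1$ with $\lambda\in\CC$; using that the gramian is conjugate linear in the first variable and linear in the second, $S$ becomes $|\lambda|^2P+\ol\lambda B+\lambda C+D$, where $P=[\fk(x,x)h_1,h_1]_\cH$, $B=[\fk(x,y)h_2,h_1]_\cH$, $C=[\fk(y,x)h_1,h_2]_\cH$, and $D=[\fk(y,y)h_2,h_2]_\cH$. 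Equating this element with its own adjoint (it lies in $Z^+$) and cancelling the selfadjoint terms $P$ and $D$ yields $\ol\lambda B+\lambda C=\lambda B^*+\ol\lambda C^*$ for every $\lambda$; taking $\lambda=1$ and $\lambda=\iac$ gives $C=B^*$, that is, $[\fk(x,y)^*h_1,h_2]_\cH=[\fk(y,x)h_1,h_2]_\cH$. Since $h_1,h_2$ are arbitrary, the nondegeneracy axiom (ve1) forces $\fk(x,y)^*=\fk(y,x)$, i.e.\ $\fk=\fk^*$.

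For (2), assume $\fk(x,x)=0$, fix $h_2\in\cH$, and set $g=\fk(x,y)h_2$. Using (1) to rewrite the term involving $\fk(y,x)=\fk(x,y)^*$, and substituting $h_1=\lambda g$ with $\lambda\in\RR$ into $S$, the two cross terms collapse to $2\lambda[g,g]_\cH$, so that $2\lambda[g,g]_\cH+D\ge 0$ for all real $\lambda$, where $D=[\fk(y,y)h_2,h_2]_\cH\ge 0$ is independent of $\lambda$. Taking $\lambda<0$, dividing by $2|\lambda|$, and letting $\lambda\to-\infty$, closedness of $Z^+$ gives $-[g,g]_\cH\ge 0$, and strictness of the cone ($Z^+\cap-Z^+=\{0\}$) then forces $[g,g]_\cH=0$, whence $g=\fk(x,y)h_2=0$ by (ve1). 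As $h_2$ was arbitrary, $\fk(x,y)=0$, and then $\fk(y,x)=\fk(x,y)^*=0$ by (1). I expect this last passage --- deducing $[g,g]_\cH=0$ from the one-parameter inequality --- to be the main obstacle, since it is precisely the point where the order structure of $Z$ (strictness together with an Archimedean/closedness property of $Z^+$) must be used, rather than purely algebraic manipulation.

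For (3), I would set $X_1=\{x\in X\mid \fk(x,x)\ne 0\}$ and $X_0=X\setminus X_1$; these are disjoint with union $X$, and by construction $\fk(x,x)\ne 0$ for all $x\in X_1$. By (2), $x\in X_0$ gives $\fk(x,y)=0$ for every $y\in X$, in particular $\fk(x,y)=0$ for all $x,y\in X_0$. For uniqueness, if $X=X_0'\cup X_1'$ is any decomposition with the stated properties, then $x\in X_1'$ forces $\fk(x,x)\ne 0$, so $X_1'\subseteq X_1$, while $x\in X_0'$ gives $\fk(x,x)=0$ (take $y=x$ in the vanishing of $\fk$ on $X_0'$), so $X_0'\subseteq X_0$; since both are partitions of $X$, this yields $X_0'=X_0$ and $X_1'=X_1$.
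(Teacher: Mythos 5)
Your argument is, in substance, the proof of Lemma~3.1 of \cite{Gheondea}, which is all the paper itself offers (it states the lemma by citation, with no proof): part (1) by scaling $h_1$ and comparing the positive, hence selfadjoint, $2$-positivity sum with its own adjoint; part (2) by the substitution $h_1=\lambda\,\fk(x,y)h_2$ and a limiting argument; part (3) formally from (2). Parts (1) and (3) are correct as written; in (1) note only that the gramian is conjugate linear in the \emph{first} variable, so scaling $h_1$ produces $\lambda B+\ol{\lambda}C$ rather than $\ol{\lambda}B+\lambda C$, but the conclusion $C=B^*$ comes out the same.

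The step you yourself flagged in (2) is, however, a genuine gap \emph{in the setting in which this paper states the lemma}. Here $\cH$ is a VE-space over a plain ordered $*$-space $Z$, so only axioms (a1)--(a2) are available: $Z$ carries no topology, ``letting $\lambda\to-\infty$'' has no meaning, and there is no closedness of $Z^+$ to invoke. Moreover, the purely order-theoretic implication your limit is meant to deliver, namely that $0\leq z\leq \epsilon d$ for all $\epsilon>0$ forces $z=0$, is false in a general ordered $*$-space: in $Z=\CC^2$ with coordinatewise conjugation and the strict cone $Z^+=\{(a,b)\in\RR^2\mid a>0\}\cup\{(0,b)\mid b\geq 0\}$, the selfadjoint elements $z=(0,1)$ and $d=(1,0)$ satisfy $0\leq z\leq\epsilon d$ for every $\epsilon>0$ although $z\neq 0$. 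So your proof of (2) is valid exactly in the admissible-space setting of \cite{Gheondea}, where axiom (a5) makes $Z^+$ closed and axioms (a3), (a6) provide the topology, but not in the generality claimed here; establishing (2) over an arbitrary ordered $*$-space would require a genuinely different, purely algebraic argument (if the statement is true at all in that generality --- your abstract obstruction does not by itself produce a $2$-positive kernel counterexample, since adjointability of the values is a strong constraint). The paper quietly sidesteps the issue: it only ever invokes part (1) of the lemma, whose proof is algebraic and complete.
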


Given an $\cL^*(\cH)$-valued kernel $\fk$ on a nonempty set $X$, for some 
VE-space $\cH$ on an ordered $*$-space $Z$,  a \emph{VE-space 
linearisation} or, equivalently, a
\emph{VE-space Kolmogorov decomposition} of $\fk$ is, by definition, 
a pair $(\cK;V)$, subject to the following conditions:
  
  \begin{itemize}
  \item[(vel1)] $\cK$ is a VE-space over the same ordered $*$-space $Z$.
  \item[(vel2)] $V\colon X\ra\cL^*(\cH,\cK)$ satisfies $\fk(x,y)=V(x)^*V(y)$ 
for all $x,y\in X$. \end{itemize}
The VE-space linearisation $(\cK;V)$ is called \emph{minimal} if
  \begin{itemize}
  \item[(vel3)] $\lin V(X)\cH=\cK$.
  \end{itemize}
Two VE-space linearisations $(V;\cK)$ and $(V';\cK')$ 
of the same kernel $\fk$ are 
called \emph{unitary equivalent} if there exists a VE-space isomorphism 
$U\colon \cK\ra\cK'$ such that $UV(x)=V'(x)$ for all $x\in X$.
  
The uniqueness of a minimal VE-space 
linearisation $(\cK;V)$ of a positive semidefinite kernel $\fk$, 
modulo unitary equivalence, 
follows in the usual way, see \cite{AyGheondea}.

Let $\cH$ be a VE-space over the ordered $*$-space $Z$, and let $X$ be a 
nonempty set. A VE-space $\cR$, 
over the same ordered $*$-space 
$Z$, is called an \emph{$\cH$-reproducing kernel VE-space on $X$} 
if there exists a Hermitian kernel $\fk\colon X\times X\ra\cL^*(\cH)$ 
such that the following axioms are satisfied:
\begin{itemize} 
\item[(rk1)] $\cR$ is a subspace of $\cF(X;\cH)$, with all algebraic operations.
\item[(rk2)] For all $x\in X$ and all $h\in\cH$, 
the $\cH$-valued function $\fk_x h=\fk(\cdot,x)h\in\cR$.
\item[(rk3)] For all $f\in\cR$ we have $[f(x),h]_\cH=[f,k_x h]_\cR$, for all 
$x\in X$ and $h\in \cH$.
\end{itemize}
As a consequence of (rk2), $\lin\{\fk_xh\mid x\in X,\ h\in\cH\}\subseteq \cR$.
The reproducing kernel VE-space $\cR$ is called \emph{minimal} if
the following property holds as well:
\begin{itemize}
\item[(rk4)] $\lin\{\fk_xh\mid x\in X,\ h\in \cH\}=\cR$.
\end{itemize}

Observe that if $\cR$ is an $\cH$-reproducing kernel VE-space on $X$ with 
kernel 
$\fk$, then $\fk$ is positive semidefinite and uniquely determined by $\cR$ 
hence, we can talk about \emph{the} 
$\cH$-reproducing kernel $\fk$ corresponding to $\cR$.
On the other hand, a minimal reproducing kernel VE-space $\cR$ is 
uniquely determined by its reproducing kernel $\fk$.

Letting $\cH$ be a VE-space over
an ordered $*$-space $Z$, for $X$ a nonempty set, an \emph{evaluation operator}
$E_x\colon \cF(X;\cH)\ra \cH$ can be defined for each $x\in X$ by letting 
$E_x f=f(x)$ for all $f\in\cF(X;\cH)$. Clearly, $E_x$ is linear.
If $\cR\subseteq \cF(X;\cH)$, with all algebraic operations, 
is a VE-space over $Z$, then $\cR$ 
is an $\cH$-reproducing kernel VE-space if and only if, for all $x\in X$, the 
restriction of the evaluation operator $E_x$ to $\cR$ is adjointable as a
linear operator $\cR\ra \cH$, e.g.\ see \cite{AyGheondea}.

\begin{proposition}[Proposition~2.4 in \cite{AyGheondea}]
\label{p:lvsrk}
Let $X$ be a nonempty set, $\cH$ a VE-space over an ordered $*$-space
$Z$, and let $\fk\colon X\times X\ra\cL^*(\cH)$ be a Hermitian kernel.

\nr{1} Any $\cH$-reproducing kernel VE-space $\cR$ with kernel $\fk$ is
a VE-space linearisation $(\cR;V)$ of $\fk$, with $V(x)=\fk_x$ for all $x\in X$.

\nr{2} For any minimal VE-space linearisation $(\cK;V)$ of $\fk$, letting 
\begin{equation}\label{e:redef} \cR=\{V(\cdot)^*f\mid f\in\cK\},
\end{equation} we obtain an $\cH$-reproducing kernel VE-space with
reproducing kernel $\fk$.
\end{proposition}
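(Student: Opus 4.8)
The plan is to treat the two statements separately, in each case by direct verification against the defining axioms, the engine being the reproducing property (rk3) in one direction and the adjointability packaged in $V$ together with minimality in the other.

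For (1), suppose $\cR$ is an $\cH$-reproducing kernel VE-space with kernel $\fk$ and set $V(x)=\fk_x$. First I would record that $V(x)\colon\cH\ra\cR$ is linear and genuinely valued in $\cR$: linearity is immediate from the linearity of each operator $\fk(y,x)$, and the range lies in $\cR$ by (rk2). The key point is adjointability. Starting from (rk3), that is $[f(x),h]_\cH=[f,\fk_xh]_\cR$, I would apply (ve2) on both sides to get
$$[V(x)h,f]_\cR=[\fk_xh,f]_\cR=[f,\fk_xh]_\cR^*=[f(x),h]_\cH^*=[h,f(x)]_\cH=[h,E_xf]_\cH,$$
for all $h\in\cH$ and $f\in\cR$, where $E_x$ is the evaluation operator. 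This exhibits $V(x)\in\cL^*(\cH,\cR)$ with $V(x)^*=E_x|_\cR$, so (vel2) reduces to a one-line computation: for $h\in\cH$,
$$V(x)^*V(y)h=V(x)^*(\fk_yh)=(\fk_yh)(x)=\fk(x,y)h,$$
whence $\fk(x,y)=V(x)^*V(y)$. Together with (vel1), which holds since $\cR$ is by hypothesis a VE-space over $Z$, this proves that $(\cR;V)$ is a VE-space linearisation of $\fk$.

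For (2), the natural device is the map $\Phi\colon\cK\ra\cF(X;\cH)$, $\Phi(f)=V(\cdot)^*f$, whose range is exactly $\cR$ and which satisfies $(\Phi f)(x)=V(x)^*f$. Since $\Phi$ is visibly linear, $\cR$ is a subspace of $\cF(X;\cH)$, which is (rk1). The crux, and the step where minimality (vel3) is indispensable, is injectivity of $\Phi$: if $\Phi(f)=0$ then $[f,V(x)h]_\cK=[V(x)^*f,h]_\cH=0$ for all $x\in X$ and $h\in\cH$, and since $\lin V(X)\cH=\cK$ this forces $[f,g]_\cK=0$ for every $g\in\cK$, whence $f=0$ by (ve1). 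With $\Phi$ a linear bijection onto $\cR$, I would transport the gramian by declaring
$$[\Phi(f),\Phi(g)]_\cR:=[f,g]_\cK,\quad f,g\in\cK,$$
which is well-defined precisely because $\Phi$ is injective, and which makes $\cR$ a VE-space over $Z$ with $\Phi$ a VE-space isomorphism, the axioms (ve1)--(ve3) transporting verbatim.

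It then remains to check the reproducing axioms for this gramian. For (rk2), I would observe that $\fk_xh=\fk(\cdot,x)h$ sends $y\mapsto\fk(y,x)h=V(y)^*V(x)h$, so $\fk_xh=\Phi(V(x)h)\in\cR$. Finally (rk3) follows by pushing both sides through $\Phi$: for $f\in\cK$ and the generic element $\Phi(f)\in\cR$,
$$[(\Phi f)(x),h]_\cH=[V(x)^*f,h]_\cH=[f,V(x)h]_\cK=[\Phi(f),\Phi(V(x)h)]_\cR=[\Phi(f),\fk_xh]_\cR,$$
as required; that $\fk$ is indeed the reproducing kernel (and Hermitian, since $\fk(y,x)^*=V(x)^*V(y)=\fk(x,y)$) then follows from its uniqueness. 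I do not anticipate a genuine obstacle here, as everything becomes bookkeeping once $\Phi$ is in place; the only delicate point is the well-definedness of the transported gramian, which is exactly what the minimality hypothesis (vel3) secures in (2).
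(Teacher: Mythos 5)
Your proof is correct and follows essentially the same route the paper takes: part (1) is the direct verification that $V(x)=\fk_x$ is adjointable with $V(x)^*=E_x$ via (rk3), and part (2) is exactly the paper's construction, where your map $\Phi$ is the operator $U$ of \eqref{e:defu} and your transported gramian is precisely \eqref{e:ufege}, with minimality (vel3) securing injectivity and hence well-definedness. Nothing further is needed.
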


Let a (multiplicative) semigroup $\Gamma$ act on $X$, 
denoted by $\xi\cdot x$, for all $\xi\in\Gamma$ 
and all $x\in X$. By definition, we have 
\begin{equation}\label{e:action}
\alpha\cdot(\beta\cdot x)=(\alpha\beta)\cdot x\mbox{ for all }\alpha,
\beta\in \Gamma\mbox{ and all }x\in X.\end{equation} 
Equivalently, this means that we have a semigroup morphism 
$\Gamma\ni\xi\mapsto \xi\cdot \in G(X)$, where $G(X)$ denotes the 
semigroup, with respect to composition, of all maps $X\ra X$. 
In case the semigroup $\Gamma$ has a unit 
$\epsilon$, the action is called \emph{unital} if $\epsilon\cdot x=x$ 
for all $x\in X$, equivalently, $\epsilon\cdot=\mathrm{Id}_X$.

Assume that $\Gamma$ is a $*$-semigroup, that is, there is an 
\emph{involution} $*$ on $\Gamma$: $(\xi\eta)^*=\eta^* \xi^*$ 
and $(\xi^*)^*=\xi$ for all $\xi,\eta\in\Gamma$. 
Note that, in case $\Gamma$ has a unit $\epsilon$ then 
$\epsilon^*=\epsilon$.

Given a VE-space $\cH$ we consider those Hermitian kernels 
$\fk\colon X\times X\ra\cL^*(\cH)$ that are \emph{invariant} under the action 
of $\Gamma$ on $X$, that is,
\begin{equation}\label{e:invariant} \fk(y,\xi\cdot x)=\fk(\xi^*\cdot y,x)
\mbox{ for all }x,y\in X\mbox{ and all }\xi\in\Gamma.
\end{equation}
A triple $(\cK;\pi;V)$ is called an \emph{invariant VE-space linearisation} 
of  the kernel $\fk$ and the action of $\Gamma$ on $X$, shortly a
\emph{$\Gamma$-invariant VE-space linearisation} of $\fk$, if:
\begin{itemize}
\item[(ikd1)] $(\cK;V)$ is a VE-space linearisation of the kernel $\fk$.
\item[(ikd2)] $\pi\colon \Gamma\ra\cL^*(\cK)$ is a $*$-representation, that
  is, a multiplicative $*$-morphism.
\item[(ikd3)] $V$ and $\pi$ are related by the formula: 
$V(\xi\cdot x)= \pi(\xi)V(x)$, for all $x\in X$, $\xi\in\Gamma$.
\end{itemize}

If $(\cK;\pi;V)$ is a $\Gamma$-invariant VE-space 
linearisation of the kernel 
$\fk$ then $\fk$ is invariant under the action of $\Gamma$ on $X$.

If, in addition to the axioms (ikd1)--(ikd3), the triple $(\cK;\pi;V)$ 
has the property
\begin{itemize}
\item[(ikd4)] $\lin V(X)\cH=\cK$,
\end{itemize} that is, the VE-space linearisation $(\cK;V)$ is minimal,
then $(\cK;\pi;V)$ is called a \emph{minimal
$\Gamma$-invariant VE-space linearisation} of $\fk$ and the action of 
$\Gamma$ on $X$.

\begin{theorem}[Theorem~2.8 in \cite{AyGheondea}]\label{t:vhinvkolmo} 
Let $\Gamma$ be a $*$-semigroup 
that acts 
on the nonempty set $X$ and let $\fk\colon X\times X\ra\cL^*(\cH)$ be a kernel, 
for some VE-space $\cH$ over an ordered $*$-space $Z$. 
The following assertions are equivalent:

\begin{itemize}
\item[(1)] $\fk$ is positive semidefinite, in the sense of \eqref{e:npos}, 
and invariant under the action of $\Gamma$ on $X$, that is, 
\eqref{e:invariant} holds.
\item[(2)] $\fk$ has a $\Gamma$-invariant VE-space linearisation 
$(\cK;\pi;V)$.
\item[(3)] $\fk$ admits an $\cH$-reproducing kernel VE-space $\cR$ and 
there exists a $*$-representation $\rho\colon \Gamma\ra\cL^*(\cR)$ such that 
$\rho(\xi)\fk_xh=\fk_{\xi\cdot x}h$ for all $\xi\in\Gamma$, $x\in X$, $h\in\cH$.
\end{itemize}

In addition, in case any of the assertions \emph{(1)}, \emph{(2)}, or
\emph{(3)} holds,  
then a minimal\, $\Gamma$-invariant VE-space linearisation can be constructed,
any minimal $\Gamma$-invariant VE-space linearisation is unique up to unitary 
equivalence, 
a pair $(\cR;\rho)$ as in assertion \emph{(3)} with $\cR$ 
minimal can be always obtained and, in this case, it is uniquely 
determined by $\fk$ as well.
\end{theorem}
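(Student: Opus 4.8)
The plan is to prove the equivalences along the cycle (2)$\Ra$(1)$\Ra$(2) together with (2)$\LRa$(3), reducing everything to the non-invariant theory already available through Proposition~\ref{p:lvsrk}, and then to upgrade the constructions by incorporating the action of $\Gamma$. The only genuinely new ingredient compared with the non-invariant Kolmogorov decomposition is the $*$-representation $\pi$ (respectively $\rho$), so the bulk of the work is its construction and the verification of (ikd2)--(ikd3). First I would dispose of (2)$\Ra$(1): given a $\Gamma$-invariant linearisation $(\cK;\pi;V)$, the identity $\fk(x,y)=V(x)^*V(y)$ makes $\fk$ positive semidefinite by the usual Gram argument applied to \eqref{e:npos}, while invariance follows from (ikd3) and (ikd2), since $\fk(y,\xi\cdot x)=V(y)^*\pi(\xi)V(x)$ and $\fk(\xi^*\cdot y,x)=(\pi(\xi^*)V(y))^*V(x)=V(y)^*\pi(\xi)V(x)$, so \eqref{e:invariant} holds.

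For (1)$\Ra$(2), I would first produce a minimal (non-invariant) VE-space linearisation $(\cK;V)$ of $\fk$, equivalently the minimal $\cH$-reproducing kernel VE-space $\cR=\lin\{\fk_xh\mid x\in X,\ h\in\cH\}$ whose gramian is determined by $[\fk_xh,\fk_yg]_\cR=[\fk(y,x)h,g]_\cH$ and with $V(x)=\fk_x$; here positive semidefiniteness makes this gramian positive, and the delicate point --- that the isotropic vectors are exactly $0$, so that (ve1) holds and $\cR$ is a genuine VE-space --- is the Schwarz-type input supplied by the non-invariant theory (Lemma~\ref{L:twopos} and \cite{AyGheondea}). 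Granting this, I define $\pi(\xi)$ on $\cK=\lin V(X)\cH$ by $\pi(\xi)\bigl(\sum_iV(x_i)h_i\bigr)=\sum_iV(\xi\cdot x_i)h_i$, which is forced by (ikd3). The crux is well-definedness: if $\sum_iV(x_i)h_i=0$, then for every $y\in X$ and $g\in\cH$,
\[
\Bigl[\sum_iV(\xi\cdot x_i)h_i,V(y)g\Bigr]_\cK=\sum_i[\fk(y,\xi\cdot x_i)h_i,g]_\cH=\sum_i[\fk(\xi^*\cdot y,x_i)h_i,g]_\cH=\Bigl[\sum_iV(x_i)h_i,V(\xi^*\cdot y)g\Bigr]_\cK=0,
\]
where the middle equality is exactly the invariance \eqref{e:invariant}; since $\cK$ is spanned by the vectors $V(y)g$ and its gramian is nondegenerate (a consequence of (ve1)), this forces $\sum_iV(\xi\cdot x_i)h_i=0$. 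Thus each $\pi(\xi)$ is a well-defined linear operator mapping $\cK$ into $\cK$, and I expect this well-definedness step --- which leans on nondegeneracy of the gramian together with invariance --- to be the main obstacle of the argument.

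It then remains to check that $\pi$ is a $*$-representation. Multiplicativity $\pi(\xi)\pi(\eta)=\pi(\xi\eta)$ is immediate on the generators from the action axiom \eqref{e:action}. For adjointability I would verify, for generators $V(x)h$ and $V(y)g$, that $[\pi(\xi)V(x)h,V(y)g]_\cK=[h,\fk(\xi\cdot x,y)g]_\cH$ while $[V(x)h,\pi(\xi^*)V(y)g]_\cK=[h,\fk(x,\xi^*\cdot y)g]_\cH$, and these agree because \eqref{e:invariant} gives $\fk(x,\xi^*\cdot y)=\fk(\xi\cdot x,y)$; hence $\pi(\xi)\in\cL^*(\cK)$ with $\pi(\xi)^*=\pi(\xi^*)$, which is (ikd2), while (ikd3) holds by construction. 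This proves (1)$\Ra$(2), and the linearisation so obtained is minimal.

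Finally, the equivalence (2)$\LRa$(3) is obtained by transporting $\pi$ through Proposition~\ref{p:lvsrk}: from a minimal invariant linearisation, $\cR=\{V(\cdot)^*f\mid f\in\cK\}$ is the reproducing kernel VE-space and the operator $\rho$ corresponding to $\pi$ satisfies $\rho(\xi)\fk_xh=\fk_{\xi\cdot x}h$ by (ikd3); conversely, a pair $(\cR;\rho)$ as in (3) is a linearisation with $V(x)=\fk_x$ and $\pi:=\rho$, and the relation $\rho(\xi)\fk_xh=\fk_{\xi\cdot x}h$ is precisely (ikd3). Minimality is preserved throughout, and uniqueness up to unitary equivalence follows from the corresponding non-invariant uniqueness: the VE-space isomorphism $U$ with $UV(x)=V'(x)$ automatically intertwines the representations, since $U\pi(\xi)V(x)h=V'(\xi\cdot x)h=\pi'(\xi)UV(x)h$ forces $U\pi(\xi)=\pi'(\xi)U$ on the generators and hence on all of $\cK$.
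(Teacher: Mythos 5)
Your proposal is correct, and its overall architecture --- build the minimal non-invariant linearisation as a concrete space spanned by the $\fk_xh$'s, graft the $*$-representation on top, pass between linearisations and reproducing kernel VE-spaces via Proposition~\ref{p:lvsrk}, and get uniqueness by an intertwining argument --- is the same as in the construction the paper recalls from \cite{AyGheondea}. The one genuine divergence is the step you yourself single out as the main obstacle: the definition of $\pi$. You define $\pi(\xi)$ only on generators, $\pi(\xi)\sum_iV(x_i)h_i=\sum_iV(\xi\cdot x_i)h_i$, and prove well-definedness by the nondegeneracy-plus-invariance argument (which is correct: nondegeneracy of the gramian follows from (ve1), and minimality closes the argument); this is a GNS-flavoured route that has the merit of working on \emph{any} abstract minimal linearisation. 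The paper never meets this issue at all: since its linearisation space is concretely the range $\cG$ of the convolution operator \eqref{e:convop}, a space of $\cH$-valued functions, $\pi(\xi)$ is defined globally by precomposition, $(\pi(\xi)f)(y)=f(\xi^*\cdot y)$ as in \eqref{e:dop}, so well-definedness is automatic and what must be checked instead is that $\pi(\xi)$ leaves $\cG$ invariant and that (ikd3) holds, which is where \eqref{e:invariant} enters, as in \eqref{e:vexic}; this trade is precisely the advertised advantage of the function-space model. Two small remarks: both you and the paper defer to \cite{AyGheondea} the foundational fact that the pairing \eqref{e:ipfezero} is a genuine gramian --- in particular (ve1), the absence of nonzero isotropic vectors, which in the purely algebraic ordered $*$-space setting is not a formal consequence of any Schwarz inequality --- so your deferral matches the paper's own level of detail; but your attribution of that input to Lemma~\ref{L:twopos} is slightly off, since that lemma only yields Hermitian-ness of the kernel and the propagation of vanishing along the diagonal, not the triviality of isotropic vectors.
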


Because we will use some of the constructions provided by the proof of 
Theorem~\ref{t:vhinvkolmo} we recall those needed.
Assuming that $\fk$ is positive semidefinite, by 
Lemma~\ref{L:twopos}.(1) it follows that $\fk$ is Hermitian, that is, 
$\fk(x,y)^*=\fk(y,x)$ for all $x,y\in X$.
We consider the convolution 
operator $K$ defined at \eqref{e:convop} and let $\cG=\cG(X;\cH)$ be its 
range, more precisely,
\begin{align}\label{e:fezero} \cG & =\{f\in\cF\mid f=Kg\mbox{ for some }
g\in\cF_0\} \\
& = \{f\in\cF\mid f(y)=\sum_{x\in X} \fk(y,x)g(x)\mbox{ for some }g\in\cF_0
\mbox{ and all } 
x\in X\}.\nonumber
\end{align}
A pairing $[\cdot,\cdot]_{\cG}\colon \cG\times \cG\ra Z$ can be defined by
\begin{align}\label{e:ipfezero} 
[e,f]_{\cG} &=[Kg,h]_{\cF_0}=\sum_{y\in X}[e(y),h(y)]_{\cH}
 =
\sum_{x,y\in X} [\fk(y,x)g(x),h(y)]_\cH,
\end{align} where $f=Kh$ and $e=Kg$ for some $g,h\in\cF_0$. 
The pairing $[\cdot,\cdot]_{\cG}$ is a $Z$-valued gramian, that is, 
it satisfies all the requirements (ve1)--(ve3). 
$(\cG;[\cdot,\cdot]_{\cG})$ is a VE-space that we denote by $\cK$.
For each $x\in X$ define $V(x)\colon \cH\ra\cG$ by
\begin{equation}\label{e:defvex} V(x) h=Kh_x,\quad h\in\cH,
\end{equation} where $h_x=\delta_x h\in\cF_0$ is the function that takes the
value $h$ at $x$ and is null elsewhere.
Equivalently,
\begin{equation}\label{e:vexa} (V(x) h)(y)=(Kh_x)(y)
=\sum_{z\in X} \fk(y,z)(h_x)(z)=\fk(y,x)h,\ \ y\in X.
\end{equation}
Note that $V(x)$ is an operator from the VE-space $\cH$ to the 
VE-space $\cG=\cK$ and it can be shown that $V(x)$ is adjointable 
for all $x\in X$. 

On the other hand, for any $x,y\in X$, by
\eqref{e:vexa}, we have
\begin{equation*} V(y)^*V(x)h=(V(x)h)(y) =\fk(y,x)h,\quad h\in\cH,
\end{equation*} hence $(V;\cK)$ is a VE-space linearisation of $\fk$ and 
it is minimal as well, more precisely, $\cG$ is the range of the
convolution operator $K$ defined at \eqref{e:convop}.

For each $\xi\in\Gamma$ let 
$\pi(\xi)\colon \cF\ra\cF$ be 
defined by
\begin{equation}\label{e:dop}
(\pi(\xi)f)(y)=f(\xi^*\cdot y),\quad f\in\cF,\ y\in X,\ \xi\in\Gamma.
\end{equation}
 $\pi(\xi)$ leaves $\cG$ invariant. 
Denote by 
the same symbol $\pi(\xi)$ the map $\pi(\xi)\colon \cG\ra\cG$.

$\pi$ is a $*$-representation of the semigroup $\Gamma$ on 
the complex vector space $\cG$ and, taking into account 
that $\fk$ is 
invariant under the action of $\Gamma$ on $X$, 
for all $\xi\in\Gamma$, $x,y\in X$, $h\in\cH$,
we have
\begin{equation}\label{e:vexic} 
(V(\xi\cdot x)h)(y)  =\fk(y,\xi\cdot x)h=\fk(\xi^*\cdot y,x)h
  =(V(x)h)(\xi^*\cdot y)=(\pi(\xi) V(x)h)(y),
\end{equation} which proves (ikd3). Thus, $(\cK;\pi;V)$, here constructed, 
is a $\Gamma$-invariant VE-space linearisation of the Hermitian kernel $\fk$.
Note that $(\cK;\pi;V)$ is minimal, that is, the axiom (ikd4) holds, 
since the VE-space linearisation
$(\cK;V)$ is minimal.

The construction of $(\cK;\pi;V)$ just presented
is essentially a minimal $\cH$-reproducing kernel VE-space 
one. In particular, it proves the statement (3) as well. On the other hand,
Proposition~\ref{p:lvsrk}
provides an explicit connection between the collection of all minimal
$\Gamma$-invariant VE-space
linearisations $(\cK;\pi;V)$ of $\fk$, identified by unitary equivalence, 
and the unique  minimal $\cH$-reproducing kernel VE-space $\cR$ of $\fk$. 
On $\cR$ a $Z$-valued gramian is defined by
\begin{equation}\label{e:ufege}
[V(\cdot)^*f,V(\cdot)^*g]_\cR=[f,g]_\cK,\quad f,g\in\cK.
\end{equation} 

\subsection{VE-Module Linearisations.}\label{ss:veml}
Given an ordered $*$-algebra $\cA$ and a VE-module $\cE$ over $\cA$, 
an \emph{$\cE$-reproducing kernel VE-module over $\cA$} is just an
$\cE$-reproducing kernel VE-space over $\cA$, with definition as in 
Subsection~\ref{ss:kvao}, which is also a VE-module over $\cA$. 
 
 \begin{proposition}\label{p:veinvkolmomodule} 
 Let $\Gamma$ be a $*$-semigroup that acts 
on the nonempty set $X$ and let $\fk\colon X\times X\ra\cL^*(\cH)$ 
be a kernel, for some VE-module $\cH$ over an ordered $*$-algebra $\cA$. 
The following assertions are equivalent:

\begin{itemize}
\item[(1)] $\fk$ is positive semidefinite, in the sense of \eqref{e:npos}, 
and invariant under the action of $\Gamma$ on $X$, that is, 
\eqref{e:invariant} holds.
\item[(2)] $\fk$ has a $\Gamma$-invariant VE-module (over $\cA$) 
linearisation $(\cK;\pi;V)$.
\item[(3)] $\fk$ admits an $\cH$-reproducing kernel VE-module $\cR$ and 
there exists a $*$-representation $\rho\colon \Gamma\ra\cL^*(\cR)$ such that 
$\rho(\xi)\fk_xh=\fk_{\xi\cdot x}h$ for all $\xi\in\Gamma$, $x\in X$, $h\in\cH$.
\end{itemize}

In addition, in case any of the assertions \emph{(1)}, \emph{(2)}, or
\emph{(3)} holds,  
then a minimal $\Gamma$-invariant VE-module 
linearisation can be constructed,
any minimal $\Gamma$-invariant VE-module 
linearisation is unique up to unitary equivalence, 
a pair $(\cR;\rho)$ as in assertion \emph{(3)} with $\cR$ 
minimal can be always obtained and, in this case, it is uniquely 
determined by $\fk$ as well.
\end{proposition}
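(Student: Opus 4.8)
The plan is to reduce everything to Theorem~\ref{t:vhinvkolmo}, which already yields the three equivalences at the level of VE-spaces, and then to verify that all the constructions involved respect the right $\cA$-module structure. The single tool that makes this work is the remark recorded in Subsection~\ref{ss:vestlo} that every adjointable operator between VE-modules is automatically a module map; I would apply it repeatedly to the values $\fk(y,x)\in\cL^*(\cH)$, to the operators $V(x)$ and $V(x)^*$, and to the VE-space isomorphisms implementing unitary equivalence.

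For the implication (1)$\Rightarrow$(2) I would take the linearisation $(\cK;\pi;V)$ produced in the proof of Theorem~\ref{t:vhinvkolmo} and equip $\cF=\cF(X;\cH)$ with the pointwise action $(fa)(x)=f(x)a$, which makes $\cF$ and $\cF_0$ into right $\cA$-modules. Since each $\fk(y,x)$ is adjointable, hence a module map, the convolution operator $K$ of \eqref{e:convop} satisfies $K(ga)=(Kg)a$, so its range $\cG=\cK$ is a submodule of $\cF$. The gramian \eqref{e:ipfezero} inherits axiom (vem) directly from the corresponding property of $[\cdot,\cdot]_\cH$, so $\cK$ is a VE-module over $\cA$. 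From $(ha)_x=h_xa$ together with the module property of $K$ one gets $V(x)(ha)=(V(x)h)a$, and from $(\pi(\xi)(fa))(y)=f(\xi^*\cdot y)a$ one gets $\pi(\xi)(fa)=(\pi(\xi)f)a$; thus $V(x)$ and $\pi(\xi)$ are module maps, upgrading $(\cK;\pi;V)$ to a $\Gamma$-invariant VE-module linearisation. The implication (1)$\Rightarrow$(3) is handled in the same spirit: on $\cR=\{V(\cdot)^*f\mid f\in\cK\}$ the pointwise action is well defined because each $V(x)^*$ is a module map, giving $V(\cdot)^*(fa)=(V(\cdot)^*f)a$, and then \eqref{e:ufege} shows the gramian on $\cR$ satisfies (vem), so $\cR$ is an $\cH$-reproducing kernel VE-module.

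The reverse implications (2)$\Rightarrow$(1) and (3)$\Rightarrow$(1) are immediate: forgetting the $\cA$-module structure, a $\Gamma$-invariant VE-module linearisation (respectively, an $\cH$-reproducing kernel VE-module together with $\rho$) is in particular the corresponding VE-space object, so Theorem~\ref{t:vhinvkolmo} gives that $\fk$ is positive semidefinite and invariant. For the final assertions I would reuse the minimal constructions from Theorem~\ref{t:vhinvkolmo} verbatim, noting that the minimal objects just built are already VE-modules; uniqueness up to unitary equivalence then transfers because any VE-space isomorphism $U$ is adjointable with $U^*=U^{-1}$, hence a module map, so the unitary equivalence furnished by Theorem~\ref{t:vhinvkolmo} is automatically an isomorphism of VE-modules.

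I do not expect a genuine obstacle here; the content is entirely in the bookkeeping of module compatibility. The one point requiring a little care is verifying axiom (vem) for the gramians on $\cK$ and on $\cR$ — in particular confirming that the module action on $\cR$ defined pointwise agrees with the one transported through the identification $f\mapsto V(\cdot)^*f$ — and this is exactly the place where the fact that adjointable operators are module maps is indispensable.
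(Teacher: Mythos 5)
Your proposal is correct and follows essentially the same route as the paper: the paper also reduces to Theorem~\ref{t:vhinvkolmo} and upgrades the construction by noting that adjointable operators are module maps, so that the convolution operator $K$ is a module map, its range $\cG=\cK$ is an $\cA$-module, and the gramian \eqref{e:ipfezero} satisfies (vem). The only difference is one of completeness: the paper records explicitly just the module structure on the linearisation of the implication (1)$\Rightarrow$(2), while you also spell out the module compatibility for $\cR$, the reverse implications, and the unitary equivalence, all of which the paper leaves implicit.
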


We briefly recall the construction made in the implication 
(1)$\Ra$(2), for later use. We first observe that, 
 since $\cH$ is a module over $\cA$,
 the space $\cF(X;\cH)$ has a natural structure of right module over 
 $\cA$, more precisely, for any $f\in\cF(X;\cH)$ and $a\in \cA$
 \begin{equation*} (fa)(x)= f(x)a,\quad x\in X.
 \end{equation*}
 In particular, the space $\cF_0(X;\cH)$ is a submodule of $\cF(X;\cH)$. On
 the other hand, by assumption, for each $x,y\in X$, $\fk(x,y)\in\cL^*(\cH)$, 
 hence $\fk(x,y)$ is a module map. These imply that the convolution operator
 $K\colon\cF_0(X;\cH)\ra\cF(X;\cH)$ defined as in \eqref{e:convop} is a 
module map. Indeed, for any $f\in\cF_0(X;\cH)$, $a\in\cA$, and $y\in X$,
\begin{equation*} ((Kf)a)(x)=\sum_{x\in X}\fk(y,x)f(x)a=K(fa)(x).
\end{equation*}
Then, the space $\cG(X;\cH)$ which, with the definition as in 
\eqref{e:fezero}, is the range of the convolution operator $K$, 
is a module over $\cA$ as well. 

When endowed with the $\cA$ valued gramian $[\cdot,\cdot]_\cG$ defined
as in \eqref{e:ipfezero}, we have
\begin{equation}\label{e:gmp}
[e,fa]_\cG=[e,f]_\cG\, a,\quad e,f\in\cG(X;\cH),\ a\in\cA.
\end{equation}
Indeed, let $e=Kg$ and $f=Kh$ for some $g,h\in\cF_0(X;\cH)$. Then,
\begin{equation*}[e,fa]_\cG=[Kg,ha]_{\cF_0}=\sum_{y\in X}
[e(y),h(y)a]_\cH=\sum_{y\in X}[e(y),h(y)]_\cH a=[Kg,h]_{\cF_0}a=[e,f]_\cG a.
\end{equation*}

From \eqref{e:gmp} and the proof of the implication (1)$\Ra$(2) in 
Theorem~\ref{t:vhinvkolmo}, it follows that $\cK=\cG(X;\cH)$ is a 
VE-module over the ordered $*$-algebra $\cA$ and hence, the triple
$(\cK;\pi;V)$ is a minimal $\Gamma$-invariant VE-module linearisation of 
$\fk$.

\subsection{VH-Space Linearisations and Reproducing Kernels.}\label{ss:vhslrk}
Let $\cH$ be a VH-space over the admissible space $Z$, and 
consider a kernel $\fk\colon X\times X\ra \cL_c^*(\cH)$. A \emph{VH-space 
linearisation} of $\fk$, or  
\emph{VH-space Kolmogorov decomposition} of $\fk$, is
a pair $(\cK;V)$, subject to the following conditions:
  
  \begin{itemize}
  \item[(vhl1)] $\cK$ is a VH-space over the same ordered $*$-space $Z$.
  \item[(vhl2)] $V\colon X\ra\cL^*_c(\cH,\cK)$ satisfies $\fk(x,y)=V(x)^*V(y)$ 
for all $x,y\in X$. \end{itemize}
The VH-space linearisation $(\cK;V)$ is called \emph{minimal} if
  \begin{itemize}
  \item[(vhl3)] $\lin V(X)\cH$ is dense in $\cK$.
  \end{itemize}
It is useful to observe that any VH-space linearisation is a VE-space
linearisation with some differences 
between them: the former requires that both the kernel $\fk$ and all
the operators $V(x)$, $x\in X$, are all continuous and continuously 
adjointable operators. As concerning minimality, the two concepts 
are significantly different. 

Two VH-space linearisations $(V;\cK)$ and $(V';\cK')$ 
of the same kernel $\fk$ are 
called \emph{unitary equivalent} if there exists a unitary operator 
$U\colon \cK\ra\cK'$ such that $UV(x)=V'(x)$ for all $x\in X$.
  
The uniqueness of a minimal VH-space 
linearisation $(\cK;V)$ of a positive semidefinite kernel $\fk$, 
modulo unitary equivalence, 
follows in the usual way, taking into account that unitary operators are 
continuous, e.g.\ see \cite{Gheondea}.

A VH-space $\cR$ over the ordered $*$-space 
$Z$ is called an \emph{$\cH$-reproducing kernel VH-space on $X$} 
if there exists a Hermitian kernel $\fk\colon X\times X\ra\cL^*_c(\cH)$ 
such that the following axioms are satisfied:
\begin{itemize} 
\item[(rk1)] $\cR$ is a subspace of $\cF(X;\cH)$, with all algebraic operations.
\item[(rk2)] For all $x\in X$ and all $h\in\cH$, 
the $\cH$-valued function $\fk_x h=\fk(\cdot,x)h\in\cR$.
\item[(rk3)] For all $f\in\cR$ we have $[f(x),h]_\cH=[f,\fk_x h]_\cR$, for all 
$x\in X$ and $h\in \cH$.
\item[(rk4)] For all $x\in X$ the evaluation operator $\cR\ni f\mapsto f(x)\in
\cH$ is continuous.
\end{itemize}
Note that, when comparing a reproducing kernel VH-space with a reproducing
kernel VE-space, for the same kernel $\fk$, there are at least two
differences. First, in
the former, we have a VH-space and the values of the kernel are all continuous 
and continuously adjointable
operators. Second, the axiom (rk4) is new even when compared to the
classical case of reproducing kernel Hilbert spaces, when this is actually a
consequence of the other axioms. 
As the following result shows, these differences have
consequences that differentiate the reproducing kernel 
VH-space from the reproducing kernel VE-space and from the reproducing kernel
Hilbert space.

\begin{lemma}\label{l:mincont}
Let $\cR$ be an $\cH$-reproducing kernel VH-space
with reproducing kernel $\fk$.

\emph{(1)} For any $x\in X$, $\fk_x\in\cL_c^*(\cH,\cR)$.

\emph{(2)} For any $x,y\in X$, $\fk(x,y)=\fk_x^* \fk_y$.

\emph{(3)} $\fk$ is positive semidefinite.

\emph{(4)} The orthogonal space of 
$\lin\{\fk_xh\mid x\in X,\ h\in\cH\}\subseteq \cR$ is the null space.

\emph{(5)} $\fk$ is uniquely determined by $\cR$.
\end{lemma}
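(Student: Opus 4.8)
The plan is to prove the five statements essentially in the order given, since each feeds into the next. The reproducing axioms (rk1)--(rk4) are the only tools available, so everything must be extracted from them.

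First I would prove (1) and (2) together. The axiom (rk4) says each evaluation operator $E_x\colon\cR\ni f\mapsto f(x)\in\cH$ is continuous. The reproducing identity (rk3), namely $[f(x),h]_\cH=[f,\fk_xh]_\cR$ for all $f\in\cR$, $x\in X$, $h\in\cH$, exhibits $\fk_x$ as an adjoint of $E_x$: rewriting it as $[E_xf,h]_\cH=[f,\fk_xh]_\cR$ shows $E_x$ is adjointable with $E_x^*=\fk_x$, hence $\fk_x\in\cL^*(\cH,\cR)$ and $\fk_x^*=E_x$. Since $E_x$ is continuous by (rk4), $\fk_x$ is continuously adjointable; and $\fk_x$ itself is continuous because, by (rk2) and (rk3), one can estimate $\tl p(\fk_xh)^2=p([\fk_xh,\fk_xh]_\cR)=p([(\fk_xh)(x),h]_\cH)=p([\fk(x,x)h,h]_\cH)$ and then apply Lemma~\ref{l:redunt} to the positive operator $\fk(x,x)\in\cL^*_c(\cH)$ to bound this by $c\,q([h,h]_\cH)=c\,\tl q(h)^2$. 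Thus $\fk_x\in\cL_c^*(\cH,\cR)$, which is (1). For (2), I would compute $\fk_x^*\fk_y=E_x\fk_y$, and by definition $E_x(\fk_yh)=(\fk_yh)(x)=\fk(x,y)h$ by (rk2), so $\fk_x^*\fk_y=\fk(x,y)$.

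Statements (3) and (5) then follow quickly. For (3), positive semidefiniteness: given $x_1,\dots,x_n\in X$ and $h_1,\dots,h_n\in\cH$, using (2) and setting $f=\sum_{j}\fk_{x_j}h_j\in\cR$, I would compute
\begin{equation*}
\sum_{i,j=1}^n[\fk(x_i,x_j)h_j,h_i]_\cH
=\sum_{i,j=1}^n[\fk_{x_i}^*\fk_{x_j}h_j,h_i]_\cH
=\sum_{i,j=1}^n[\fk_{x_j}h_j,\fk_{x_i}h_i]_\cR
=[f,f]_\cR\geq 0,
\end{equation*}
which is exactly \eqref{e:npos}. For (5), suppose $\cR$ is an $\cH$-reproducing kernel VH-space with respect to two Hermitian kernels $\fk$ and $\fk'$. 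Applying (rk3) for both kernels to an arbitrary $f\in\cR$ gives $[f,\fk_xh]_\cR=[f(x),h]_\cH=[f,\fk'_xh]_\cR$, so $[f,(\fk_x-\fk'_x)h]_\cR=0$ for every $f\in\cR$. Taking $f=(\fk_x-\fk'_x)h\in\cR$ (which lies in $\cR$ by (rk2) applied to each kernel) forces $[(\fk_x-\fk'_x)h,(\fk_x-\fk'_x)h]_\cR=0$, hence $(\fk_x-\fk'_x)h=0$ by (ve1), for all $x$ and $h$; evaluating at any $y\in X$ yields $\fk(y,x)h=\fk'(y,x)h$, so $\fk=\fk'$.

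Statement (4) is where I expect the genuine obstacle, because it asserts something false for reproducing kernel Hilbert spaces replaced by VH-spaces only if one is careless: here the orthogonal companion of $\cR_0:=\lin\{\fk_xh\mid x\in X,\ h\in\cH\}$ is claimed to be $\{0\}$. The natural argument is: let $f\in\cR$ satisfy $[f,\fk_xh]_\cR=0$ for all $x,h$; by (rk3) this means $[f(x),h]_\cH=0$ for all $x\in X$ and all $h\in\cH$, so by the definiteness axiom (ve1) (applied with $h=f(x)$) we get $f(x)=0$ for every $x$, i.e.\ $f=0$. The subtlety is that the orthogonal companion $\cR_0^\perp$ consists of $f$ annihilating $\cR_0$, and this computation shows $\cR_0^\perp=\{0\}$ directly, without needing $\cR_0$ to be closed or orthocomplemented --- a distinction that matters precisely because VH-subspaces need not be orthocomplemented (as emphasised in the text). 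I would phrase (4) as the statement about the orthogonal companion to avoid any appeal to density or decomposition, since the whole point is that the reproducing property pins down the values of functions pointwise and definiteness does the rest.
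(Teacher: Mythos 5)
Your proposal is correct and follows essentially the same route as the paper's proof: adjointability of the evaluation operator $E_x$ with $E_x^*=\fk_x$ extracted from (rk3) and (rk4), continuity of $\fk_x$ obtained by applying Lemma~\ref{l:redunt} to the positive operator $\fk(x,x)\in\cL_c^*(\cH)$, the factorisation $\fk(x,y)=\fk_x^*\fk_y$ yielding positive semidefiniteness, and nondegeneracy of the $\cH$-gramian giving assertion (4) for the orthogonal companion without any appeal to closedness or orthocomplementation. The only (immaterial) difference is in (5), where you compare two hypothetical kernels directly via $[f,(\fk_x-\fk'_x)h]_\cR=0$, whereas the paper notes that the evaluation operators $E_x$ are determined by $\cR$ and writes $\fk(y,x)=E_yE_x^*$; both arguments rest on the same reproducing identity and uniqueness of adjoints.
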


\begin{proof} Clearly, for arbitrary $x\in X$, the map
$\fk_x\colon\cH\ra \cR$ is a linear operator. From (rk3) it follows that
  $\fk_x$ is adjointable and its adjoint $\fk_x^*$ is $E_x\colon \cR\ra\cH$, 
the evaluation operator $E_x(f)=f(x)$, for $f\in\cR$ which, by (rk4), 
is assumed to be continuous. On the other hand,  by (rk3),
for arbitrary $x,y\in X$, we have
\begin{equation*}
  [\fk_xh,\fk_yg]_\cR=[(\fk_xh)(y),g]_\cH=[\fk(y,x)h,g]_\cH,\quad h,g\in\cH,
\end{equation*} hence the assertion (2) is proven. In particular,
$\fk(x,x)=\fk_x^* \fk_x$ is a positive operator. Since, by assumption,
$\fk(x,x)\in \cL_c^*(\cH)$, we can apply Lemma~\ref{l:redunt} and obtain that,
for every seminorm $p\in S(Z)$ there exist a seminorm $q\in S(Z)$ and a
constant $c\geq 0$ such that
\begin{equation*}p([\fk_x h,\fk_x h]_\cR)=p([\fk(x,x)h,h]_\cH)\leq c\,
  q([h,h]),\quad h\in\cH,
\end{equation*} hence $\fk_x$ is continuous. This concludes the proof of
assertion (1).

Let $n\in\NN$, $x_1,\ldots,x_n\in X$, and $h_1,\ldots,h_n\in\cH$ be
arbitrary. Then
\begin{align*}\sum_{j,k=1}^n[\fk(x_j,x_k)h_k,h_j]_\cH & =
  \sum_{j,k=1}^n[\fk_{x_j}^*\fk_{x_k} h_k,h_j]_\cH=[\sum_{k=1}^n \fk_{x_k}h_k,
    \sum_{j=1}^n \fk_{x_j}h_j]_\cR\geq 0,
\end{align*} hence assertion (3) is proven.

Let $f\in\cR$ be an $\cH$-valued function orthogonal to all $\cH$-valued
functions $\fk_xh$, with $x\in X$ and $h\in \cH$. By (rk3), for each $x\in X$,
\begin{equation*}0=[f,\fk_x h]_\cR=[f(x),h]_\cH,\quad h\in\cH,
\end{equation*} and hence, since the gramian $[\cdot,\cdot]_\cH$ is
nondegenerate it follows that $f(x)=0$. Therefore, $f=0$ and assertion (4)
is proven as well.

In order to see that assertion (5) is true, observe that once the
$\cH$-reproducing kernel VH-space $\cR$ on the set $X$ is given, 
all the evaluation operators $E_x$ are uniquely determined by $\cR$. Since
$E_x=\fk_x^*$, from (2) it follows
\begin{equation*} \fk(y,x)=\fk_y^*\fk_x=E_y E_x^*,\quad x,y\in X,
\end{equation*} hence the kernel $\fk$ is uniquely determined by $\cR$.
\end{proof}

Assertion (4) in the previous lemma says that reproducing kernel VH-spaces
have a built-in minimality property but, due to the fact that not any closed
subspace of a VH-space is orthocomplemented, the following definition makes
sense. An $\cH$-reproducing kernel VH-space $\cR$ on $X$ is called
\emph{minimal} if

\begin{itemize}
\item[(5)] $\lin\{\fk_xh\mid x\in X,\ h\in\cH\}$ is dense in $\cR$.
\end{itemize}

\begin{proposition}\label{p:mrkvhs}
Let $\cH$ be a VH-space over some admissible space $Z$ and $\fk$ an
$\cH$-kernel on $X$ and assume that there exists an $\cH$-reproducing kernel
VH-space $\cK$ on $X$ with reproducing kernel $\fk$.

\nr{1} The closure of $\lin\{\fk_xh\mid x\in X,\ h\in\cH\}$ in
$\cK$ is a minimal $\cH$-reproducing kernel VH-space on $X$ with kernel $\fk$.

\nr{1} If $\cR$ is another minimal $\cH$-reproducing kernel VH-space 
on $X$ with the same reproducing kernel $\fk$, then $\cR\subseteq \cK$. 
In particular, the
minimal $\cH$-reproducing kernel VH-space on $X$ with reproducing kernel $\fk$
is unique.
\end{proposition}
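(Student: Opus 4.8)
The plan is to set $\cR_0:=\ol{\lin\{\fk_xh\mid x\in X,\ h\in\cH\}}$, the closure taken in the VH-space $\cK$, and to prove (1) by verifying that $\cR_0$ is a minimal $\cH$-reproducing kernel VH-space with kernel $\fk$, then to prove (2) by identifying any other minimal such space with $\cR_0$ inside $\cF(X;\cH)$.

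For (1), I first note that $\cR_0$ is a VH-space over $Z$: it is a closed subspace of the complete space $\cK$, hence complete, and it carries the gramian inherited from $\cK$, which still satisfies (ve1)--(ve3) (non-degeneracy being inherited). Since $\cR_0\subseteq\cK\subseteq\cF(X;\cH)$ with the pointwise operations, axiom (rk1) holds, and (rk2) is immediate because each $\fk_xh$ lies in $\lin\{\fk_xh\}\subseteq\cR_0$. Axiom (rk3) is inherited: for $f\in\cR_0$ one has, using (rk3) in $\cK$ and $\fk_xh\in\cR_0$, that $[f(x),h]_\cH=[f,\fk_xh]_\cK=[f,\fk_xh]_{\cR_0}$. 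For (rk4), Lemma~\ref{l:mincont}(1) gives $\fk_x\in\cL_c^*(\cH,\cK)$, so its adjoint, the evaluation operator $E_x$, is continuous on $\cK$, and its restriction to $\cR_0$ is then continuous. Finally, $\lin\{\fk_xh\}$ is dense in $\cR_0$ by construction, which is precisely the minimality condition (5). Each of these is a routine check.

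For (2), the decisive observation is that the gramian on $\lin\{\fk_xh\}$ is completely determined by $\fk$, independently of the ambient reproducing kernel VH-space. Indeed, in any $\cH$-reproducing kernel VH-space with kernel $\fk$, axiom (rk3) yields
\begin{equation*}
[\fk_xh,\fk_yg]=[(\fk_xh)(y),g]_\cH=[\fk(y,x)h,g]_\cH,\quad x,y\in X,\ h,g\in\cH.
\end{equation*}
Since the elements $\fk_xh=\fk(\cdot,x)h$ are literally the same functions in $\cF(X;\cH)$ in either space, the common subspace $\lin\{\fk_xh\}$ carries one and the same gramian, and hence, by \eqref{e:qujeh}, one and the same family of seminorms; in particular the topologies induced from $\cR$ and from $\cK$ coincide on it. Now let $\cR$ be a minimal $\cH$-reproducing kernel VH-space with kernel $\fk$ and fix $f\in\cR$. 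By minimality there is a net $(f_i)$ in $\lin\{\fk_xh\}$ with $f_i\to f$ in $\cR$; by the previous remark $(f_i)$ is Cauchy for the intrinsic topology, hence Cauchy in $\cK$, and completeness of $\cK$ gives a limit $\tl f\in\cR_0\subseteq\cK$. It remains to identify $\tl f$ with $f$ as functions on $X$: for each $x\in X$, continuity of evaluation in $\cR$ (axiom (rk4)) gives $f_i(x)\to f(x)$, while continuity of evaluation in $\cK$ (Lemma~\ref{l:mincont}(1)) gives $f_i(x)\to\tl f(x)$; since $\cH$ is Hausdorff these limits agree, so $f(x)=\tl f(x)$ for all $x$, i.e.\ $f=\tl f\in\cK$. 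This proves $\cR\subseteq\cK$ (in fact $\cR\subseteq\cR_0$, with matching gramians by joint continuity of the gramians, Lemma~\ref{l:topology}(4)). Running the same argument in the opposite direction shows $\cR_0\subseteq\cR$, so $\cR=\cR_0$; applying this with $\cK$ replaced by any minimal reproducing kernel VH-space, whose associated $\cR_0$ is the space itself, yields the asserted uniqueness.

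I expect the main obstacle to be exactly the identification $\tl f=f$ in the last step: a priori $\cR$ and $\cK$ are distinct completions of the same topological VE-space $\lin\{\fk_xh\}$ sitting inside $\cF(X;\cH)$, and nothing forces an abstract limit computed in one to have the same pointwise values as the limit computed in the other. This is precisely where axiom (rk4), the continuity of the evaluation operators---which is a genuinely new requirement compared with the reproducing kernel Hilbert space setting---must be used, together with the Hausdorff property of $\cH$, to transfer convergence from the VH-space topology to pointwise convergence.
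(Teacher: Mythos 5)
Your proposal is correct and follows essentially the same route as the paper's proof: part (1) is a direct verification of the axioms, and for part (2) both you and the paper observe that by (rk3) the gramian, hence the topology \eqref{e:qujeh}, on $\lin\{\fk_xh\mid x\in X,\ h\in\cH\}$ is determined intrinsically by $\fk$, then transport a net witnessing minimality of $\cR$ through the completeness of $\cK$ and identify the two limits as the same function on $X$. The only divergence is in that final identification: you use (rk4) (continuity of the evaluation operators) together with the Hausdorff property of $\cH$, whereas the paper uses (rk3) combined with the joint continuity of the gramians (Lemma~\ref{l:topology}(4)), separatedness of $Z$, and nondegeneracy of $[\cdot,\cdot]_\cH$ --- both arguments are valid, so your closing remark that (rk4) \emph{must} be invoked there is slightly overstated, though your version is the more direct one.
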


\begin{proof} (1) This statement is clear from the axioms (rk1)--(rk4).

(2) Clearly, $\cL=\lin\{\fk_xh\mid x\in X,\ h\in\cH\}$ is contained
in both $\cR$ and $\cK$. In addition,
\begin{equation*}[f,g]_\cR=[f,g]_{\cK},\quad f,g\in\cL,
\end{equation*} and, with notation as in Lemma~\ref{l:topology}, we have
$\tl{p}_\cR|\cL
=\tl{p}_\cK|\cL$ and hence $\tau_\cR|\cL=\tau_\cK|\cL$. 
By the minimality of $\cR$,
for any $f\in\cR$ there exists a net $(f_i)_i$ in $\cL$ such that
$f_i\xrightarrow[i]{\tau_\cR} f$ and
\begin{equation*} [f(x),h]_\cH=[f,\fk_xh]_\cR=\lim_i [f_i,\fk_x h]_\cR,\quad
  x\in X,\ h\in\cH.
\end{equation*} But, $(f_i)_i$ is a Cauchy net in
$(\cL;\tau_\cR|\cL)=(\cL;\tau_\cK|\cL)$ and hence, there exists $g\in\cK$ such
that $f_i\xrightarrow[i]{\tau_\cK} g$, which implies
\begin{equation*}[g(x),h]_\cH=[g,\fk_xh]_\cK=\lim_i [f_i,\fk_xh]_\cK.
\end{equation*} Since, for arbitrary fixed $x\in X$ and $h\in\cH$ we have
\begin{equation*}[f_i,\fk_xh]_\cK=[f_i,\fk_xh]_\cR,\quad \mbox{ for any }i,
\end{equation*} taking into account that $Z$ is separated, it follows
\begin{equation*}[f(x),h]_\cH=[g(x),h]_\cH,\quad x\in X,\ h\in\cH,
\end{equation*} hence $f=g\in\cR$. This proves $\cR\subseteq\cK$.
\end{proof}

Observe that, given $X$ a nonempty set and $\cH$ a VH-space, for any $x\in X$
one can define a
general \emph{evaluation operator} $E_x\colon \cF(X;\cH)\ra \cH$ by
$E_x(f)=f(x)$, for all $f\in\cF(X;\cH)$. In particular, evaluation operators
can be defined if instead of $\cF(X;\cH)$ we can consider any vector 
subspace $\cS$ of $\cF(X;\cH)$.  

\begin{proposition}\label{p:rks}
Let $X$ be a nonempty set, $\cH$ a VH-space over an admissible space 
$Z$, and let $\cR\subseteq \cF(X;\cH)$, with all algebraic operations, 
be a VH-space over $Z$. Then $\cR$ 
is an $\cH$-reproducing kernel VH-space if and only if, for all $x\in X$, the 
evaluation operator $E_x\in\cL_c^*(\cR,\cH)$, that is, $E_x$ is continuous and
continuously adjointable.
\end{proposition}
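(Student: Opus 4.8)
The plan is to establish the two implications separately; in both directions the pivot is the identification of the adjoint of the evaluation operator $E_x$ with the kernel-section operator $\fk_x$, together with the relation $\fk(x,y)=E_xE_y^*$.

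For the direct implication I would assume that $\cR$ is an $\cH$-reproducing kernel VH-space with kernel $\fk$ and read off the continuity of each $E_x$ directly from (rk4). Rewriting the reproducing identity (rk3) as $[E_xf,h]_\cH=[f,\fk_xh]_\cR$, valid for all $f\in\cR$ and all $h\in\cH$, shows at once that $E_x$ is adjointable with $E_x^*=\fk_x$. The remaining point, the continuity of $E_x^*$, is exactly the content of Lemma~\ref{l:mincont}.(1), which gives $\fk_x\in\cL_{c}^*(\cH,\cR)$ and in particular continuous; hence $E_x$ is continuous and continuously adjointable, i.e.\ $E_x\in\cL_{c}^*(\cR,\cH)$.

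For the converse I would assume $E_x\in\cL_{c}^*(\cR,\cH)$ for every $x\in X$ and define $\fk(x,y):=E_xE_y^*$. Since compositions and adjoints of continuous, continuously adjointable operators are again of this type, $\fk(x,y)\in\cL_{c}^*(\cH)$, and $\fk(x,y)^*=(E_xE_y^*)^*=E_yE_x^*=\fk(y,x)$, so $\fk$ is a Hermitian $\cL_{c}^*(\cH)$-valued kernel. Then I would verify the axioms (rk1)--(rk4): axiom (rk1) is the standing hypothesis that $\cR\subseteq\cF(X;\cH)$ with all algebraic operations; the central computation $(\fk_xh)(y)=\fk(y,x)h=E_yE_x^*h=(E_x^*h)(y)$ identifies the function $\fk_xh$ with the element $E_x^*h\in\cR$, which yields (rk2), and then $[f,\fk_xh]_\cR=[f,E_x^*h]_\cR=[E_xf,h]_\cH=[f(x),h]_\cH$ yields (rk3); finally (rk4) is the assumed continuity of $E_x$. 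This exhibits $\cR$ as an $\cH$-reproducing kernel VH-space.

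The step I expect to require the most care---and the reason the statement is not automatic at this generality---is the continuity of the adjoint. In a VH-space an adjointable operator need not have a continuous adjoint, so in the direct implication one genuinely needs Lemma~\ref{l:mincont}.(1) (which rests on Lemma~\ref{l:redunt} applied to the positive operator $\fk(x,x)=\fk_x^*\fk_x$) to supply the continuity of $E_x^*=\fk_x$; symmetrically, in the converse the hypothesis $E_x\in\cL_{c}^*(\cR,\cH)$ is precisely what forces $E_x^*$ to be continuous, ensuring that $\fk(x,y)=E_xE_y^*$ lands in $\cL_{c}^*(\cH)$ rather than merely in $\cL^*(\cH)$. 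Everything else is formal bookkeeping.
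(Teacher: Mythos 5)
Your proposal is correct and follows essentially the same route as the paper: in the forward direction you identify $E_x^*=\fk_x$ from (rk3) and invoke Lemma~\ref{l:mincont}.(1) for the continuity of the adjoint, and in the converse you define the kernel through the evaluation operators (your $\fk(x,y)=E_xE_y^*$ is exactly the paper's $\fk(y,x)h=(E_x^*h)(y)$) and verify (rk1)--(rk4). The only cosmetic difference is that you check Hermitian-ness by the adjoint identity $(E_xE_y^*)^*=E_yE_x^*$, where the paper does the same computation written out in terms of gramians.
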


\begin{proof} Assume first that $\cR$ is an $\cH$-reproducing kernel VH-space
on $X$ and let $\fk$ be its reproducing kernel.
For any $h\in\cH$ and any $f\in\cR$ we have
\begin{equation}\label{e:aeo}
[E_xf,h]_\cH=[f(x),h]_\cH=[f,\fk_x h]_\cR.
\end{equation}
Since $\fk_x \in\cL(\cH,\cR)$, it follows that $E_x$ is adjointable and, in 
addition, $E_x^*=\fk_x$, for all $x\in X$.  It was
proven in Lemma~\ref{l:mincont} that 
$\fk_x\in\cL_c^*(\cH,\cR)$, hence $E_x\in\cL_c^*(\cR,\cH)$.

Conversely, assume that, for all $x\in X$, the evaluation operator 
$E_x\in\cL^*_c(\cR,\cH)$. Equation \eqref{e:aeo} 
suggests to define the kernel $\fk$ 
in the following way:
\begin{equation}\label{e:drk}
\fk(y,x)h=(E_x^*h)(y),\quad x,y\in X,\ h\in\cH.
\end{equation}
Then $k(y,x)\colon \cH\ra\cH$ is a linear operator and, letting
$\fk_x=\fk(\cdot,x)$ for all $x\in X$, we have 
$\fk_x h=E_x ^*h$ for all $x\in X$ and all $h\in \cH$. 
The reproducing property (rk3) holds:
\begin{equation*} [f(x),h]_\cH=[E_x f,h]_\cH=[f,E_x^* h]_\cR=[f,\fk_xh]_\cR,
\quad f\in\cR,\ h\in\cH,\ x\in X.
\end{equation*}
The axioms (rk1), (rk2), and (rk3) are clearly satisfied. We prove
that $\fk$ is a Hermitian kernel. To see this,
fix $x,y\in X$ and $h,l\in\cH$. Then
\begin{align*}[\fk(y,x)h,l]_\cH & = [(\fk_x h)(y),l]_\cH =[\fk_x h,\fk_y l]_\cR\\
& = [\fk_y l,\fk_x h]_\cR^*=[\fk(x,y)l,h]_\cR^*=[h,\fk(x,y)l]_\cR.
\end{align*}
Therefore, $\fk(y,x)$ is adjointable and $\fk(y,x)^*=\fk(x,y)$, hence $\fk$ is
a Hermitian kernel. We have proven that $\fk$ is the reproducing 
kernel of $\cR$.
\end{proof}

There is a very close connection between VH-space linearisations and
reproducing kernel VH-spaces, similar, to a certain extent, 
to the connection between VE-space linearisations and reproducing 
kernel VE-spaces, as in Proposition~\ref{p:lvsrk}.

\begin{proposition}\label{p:vhlvsrk}
Let $X$ be a nonempty set, $\cH$ a VH-space over an admissible space
$Z$, and let $\fk\colon X\times X\ra\cL_c^*(\cH)$ be a Hermitian kernel.

\nr{1} For any VH-space linearisation $(\cK;V)$ of $\fk$, letting $\cK_0$
denote the closure of the linear span of $V(X)\cH$ in $\cK$ and
$V_0(x)h:=V(x)h\in\cK_0$, for all $x\in X$ and all $h\in\cH$, we obtain a
minimal VH-space linearisation $(\cK_0;V_0)$ of $\fk$.

\nr{2} For any minimal VH-space linearisation $(\cK;V)$ of $\fk$, letting 
\begin{equation}\label{e:rehdef} \cR=\{V(\cdot)^*f\mid f\in\cK\},
\end{equation} we obtain the minimal $\cH$-reproducing kernel VH-space with
reproducing kernel $\fk$.

\nr{3} Any $\cH$-reproducing kernel VH-space $\cR$ with kernel $\fk$ is
a VH-space linearisation $(\cR;V)$ of $\fk$, with $V(x)=\fk_x$ for all $x\in X$.
In addition, the $\cH$-reproducing kernel VH-space $\cR$ is minimal if and
only if the VH-space linearisation $(\cR;V)$ is minimal.
\end{proposition}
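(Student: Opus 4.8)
The plan is to prove the three assertions in Proposition~\ref{p:vhlvsrk} by reducing each, as much as possible, to objects and lemmas already established in the excerpt, particularly Proposition~\ref{p:lvsrk} (the VE-space analog), Lemma~\ref{l:mincont}, Proposition~\ref{p:rks}, and the standard density/completeness machinery of VH-spaces. For assertion (1), I would take $(\cK;V)$ to be a VH-space linearisation and set $\cK_0$ to be the closure of $\lin V(X)\cH$ in $\cK$. Since $\cK$ is complete and $\cK_0$ is closed, $\cK_0$ is itself a VH-space over $Z$. The key point is that each $V(x)$ already has its range inside $\cK_0$ by construction, so the corestriction $V_0(x)\colon\cH\ra\cK_0$ is well defined; I must check that $V_0(x)$ is still continuous and continuously adjointable with $V_0(x)^*=V(x)^*|\cK_0$. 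Continuity of $V_0(x)$ is inherited directly from that of $V(x)$ since the topology on $\cK_0$ is the restriction of that on $\cK$, and the adjoint relation $\fk(x,y)=V_0(x)^*V_0(y)$ survives because $V(y)h\in\cK_0$ for all $y,h$. Then $(\cK_0;V_0)$ satisfies (vhl1)--(vhl2) and is minimal by the very definition of $\cK_0$.

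For assertion (2), I would mimic Proposition~\ref{p:lvsrk}.(2) but must respect the continuity axiom (rk4). Given a minimal VH-space linearisation $(\cK;V)$, I define $\cR=\{V(\cdot)^*f\mid f\in\cK\}$ and transport the gramian via $[V(\cdot)^*f,V(\cdot)^*g]_\cR=[f,g]_\cK$, exactly as in \eqref{e:ufege}; minimality of $(\cK;V)$ guarantees this is well defined (if $V(\cdot)^*f=0$ then $[f,V(x)h]_\cK=0$ for all $x,h$, forcing $f\perp\cK_0=\cK$, hence $[f,g]_\cK$ depends only on the function $V(\cdot)^*f$). The reproducing property (rk3) follows from $[V(x)^*f,h]_\cH=[f,V(x)h]_\cK$ and the definition of the gramian; (rk2) holds because $\fk_xh=V(\cdot)^*V(x)h$. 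The genuinely new work here is verifying (rk4), continuity of each evaluation operator $E_x$ on $\cR$: since $E_x(V(\cdot)^*f)=V(x)^*f$ and $V(x)^*$ is continuous on $\cK$ while the map $f\mapsto V(\cdot)^*f$ is an isometry $\cK\ra\cR$, continuity of $E_x$ transfers directly. One must also confirm $\cR$ is complete, which follows from the isometry identifying $\cR$ with $\cK$ (or with $\cK/\ker$, trivial by minimality), so $\cR$ is a VH-space; Lemma~\ref{l:mincont} then certifies it is the (unique) minimal reproducing kernel VH-space.

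For assertion (3), the forward direction is essentially Lemma~\ref{l:mincont}: given an $\cH$-reproducing kernel VH-space $\cR$ with kernel $\fk$, setting $V(x)=\fk_x$ yields $\fk_x\in\cL_c^*(\cH,\cR)$ by Lemma~\ref{l:mincont}.(1) and $\fk(x,y)=\fk_x^*\fk_y$ by Lemma~\ref{l:mincont}.(2), which are exactly (vhl1)--(vhl2). For the minimality equivalence I would observe that $\lin V(X)\cH=\lin\{\fk_xh\mid x\in X,\ h\in\cH\}$, so density of this span in $\cR$ (minimality of the linearisation, vhl3) is literally the defining condition (5) for minimality of the reproducing kernel VH-space, giving the ``if and only if'' immediately. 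I expect the main obstacle to be assertion (2), specifically the verification of axiom (rk4) and of completeness of $\cR$: these are the places where VH-space pathologies (non-orthocomplemented subspaces, absence of Riesz representation) could in principle break the argument, so I would lean on the isometric identification $V(\cdot)^*\colon\cK\ra\cR$ and the already-proven continuity of adjoints in $\cL_c^*$ to keep everything inside the controlled world of continuous, continuously adjointable operators, rather than attempting any direct estimate on $\cR$ itself.
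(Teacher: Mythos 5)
Your proposal is correct and takes essentially the same route as the paper's proof: in (1) the corestriction $V_0(x)$ together with $V_0(x)^*=V(x)^*|\cK_0$, in (2) the isometry $U\colon f\mapsto V(\cdot)^*f$ transporting the gramian from $\cK$ to $\cR$ and yielding completeness and axiom (rk4) from the continuity of $V(x)^*$, and in (3) Lemma~\ref{l:mincont}. The only differences are cosmetic: you verify injectivity/well-definedness of the gramian on $\cR$ directly where the paper cites Proposition~\ref{p:lvsrk}, and you spell out the minimality equivalence in (3), which the paper leaves implicit.
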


\begin{proof} (1) Clearly $\cK_0$ is a VH-subspace of $\cK$. By its very
  definition, $V_0(x)\in \cL_c(\cH,\cK_0)$, for all $x\in X$. Fixing $x\in X$,
  we consider the linear operator $W(x)=V(x)^*|\cK_0\colon \cK_0\ra\cH$ and
  observe that $W(x)\in\cL_c(\cK_0,\cH)$. Then,
\begin{equation*}
  [W(x)k,h]_{\cH}=[V(x)^*k,h]_{\cH}=[k,V(x)h]_{\cK}=[k,V(x)h]_{\cK_o}
  =[k,V_0(x)]_{\cK_0},\quad h\in\cH,\ k\in\cK_0,
\end{equation*}
hence $W(x)$ is the adjoint operator of $V_0(x)$, hence
$V_0(x)\in\cL_c^*(\cH,\cK_0)$. In addition,
\begin{align*}[V_0(x)^*V_0(y)h,g]_{\cH} & =[V_0(y)h,V_0(x)g]_{\cK_0}
=[V(y)h,V(x)g]_{\cK}\\
& =[V(x)^*V(y)h,g]_{\cH}=[\fk(x,y)h,g]_{\cH},\quad
  h,g\in\cH, 
\end{align*}
hence $(\cK_0;V_0)$ is a VH-space linearisation of $\fk$. Since, by
definition, $\cK_0$ coincides with the closure of the linear span of
$V_0(X)\cH$, it follows that it is minimal as well.

(2) Let $(\cK;\pi;V)$ be a minimal VH-space linearisation of the kernel $\fk$.
Define $\cR$ as in \eqref{e:rehdef} that is, $\cR$ consists of all functions 
$X\ni x\mapsto V(x)^*f\in\cH$, in particular $\cR\subseteq \cF(X;\cH)$, 
and we endow $\cR$ with the algebraic operations inherited from the 
complex vector space $\cF(X;\cH)$. We consider the correspondence
\begin{equation}\label{e:defu} \cK\ni f\mapsto Uf=V(\cdot)^*f\in\cR.
\end{equation} 
From Proposition~\ref{p:lvsrk}, we know
that $(\cR; [ \cdot, \cdot ]_{\cR})$ with the $Z$-gramian 
$[ Uf, Ug ]_{\cR} = [ f, g ]_{\cK}$ is a VE-space, that
$U \colon \cK \ra \cR$ is a unitary operator of VE-spaces $\cK$ and $\cR$, 
and that $(\cR;[\cdot,\cdot]_{\cR})$
is an $\cH$-reproducing kernel VE-space with reproducing kernel $\fk$.
In addition, by \eqref{e:defu} and the definition of the natural topology
of a VH-space, see Lemma~\ref{l:topology}, it follows that $U$ is a
homeomorphism, hence $\cR$ is a VH-space. Therefore, the axioms (rk1)-(rk3)
hold and the minimality of $\cR$ follows from the minimality of $\cK$.  
It only remains  to show that the axiom (rk4) holds as well.

We show that $\fk_x \in \cL_{c}^{*}(\cH, \cR) $ for all $x \in X$. First
recall that $\fk_x \in \cL^{*}(\cH, \cR) $ for all $x \in X$
by the reproducing kernel axiom. We first prove that
$\fk_x$ is continuous. By the continuity
of $V(x)$ for arbitrary $x \in X$, for any 
$p \in S(Z)$ there exist $q \in S(Z)$ and $c_p(x) \geq 0$ such that
\begin{equation*}
p ([ \fk_{x}(h), \fk_{x}(h) ]_{\cR})
= p ([ \fk_{x}h, \fk_{x}h ]_{\cR})
=p ([ V(x)h, V(x)h ]_{\cK}) 
\leq c_p(x)\,q ( [ h, h ]_{\cH} ),\quad h\in\cH,
\end{equation*}
hence $\fk_x$ is continuous.

Finally we show that 
$\fk_x^{*}$ is continuous. 
Let $p \in S(Z)$. Then, by the
continuity of $V(x)^{*}$ for arbitrary $x \in X$, for some $q \in S(Z)$ 
and $c_p(x) \geq 0$, we have
\begin{align*}
 p ( [ \fk_x^{*}f, \fk_x^{*}f ]_{\cH} )
& = p ( [ f(x), f(x) ]_{\cH} )
= p ( [ V(x)^{*}g, V(x)^{*}g ]_{\cH} ) \\
& \leq c_p(x)\, q ( [ g, g ]_{\cK} )
= c_p(x)\, q ( [ Ug, Ug ]_{\cR} ) \\
& = c_p(x)\, q ( [ f, f ]_{\cR} ),\quad f\in\cR,
\end{align*}
where $g \in \cK$ is the unique vector 
such that $Ug = V(\cdot)^{*}g = f$. Hence
the continuity of $\fk_x^{*}$ is proven.

(3). Assume that $(\cR;[\cdot,\cdot]_\cR)$ is an $\cH$-reproducing 
kernel VH-space on $X$, with reproducing kernel $\fk$.
 We let $\cK=\cR$ and define $V(x) \colon \cH \ra \cK$ by
\begin{equation}\label{e:vhxak}V(x)h=\fk_x h,\quad x\in X,\ h\in \cH.
\end{equation}
Then, $V(x) \in \cL^{*}(\cH, \cK)$, 
with $V(x)^{*} \colon f \ni \cK=\cR \mapsto f(x) \in \cH$ 
for all $x \in X$. From Lemma~\ref{l:mincont},
we see that, actually, $V(x) \in \cL_{c}^{*}(\cH, \cK)$ and that
$V(y)^{*}V(x) = \fk(y,x)$ for all $x,y \in X$.
Thus, $(\cK;V)$ is a VH-space linearisation of $\fk$.  
\end{proof}

Let us observe that, until now, we did not say anything about the existence of
reproducing kernel VH-spaces or, equivalently, of VH-space linearisations,
associated to a given positive semidefinite
$\cH$-kernel. This question is considered in the next subsection and
answered in Corollary~\ref{c:evhsl}, as a
consequence of Theorem~\ref{t:vhinvkolmo2}, by providing a necessary and
sufficient condition (b2). We present some cases when this boundedness condition is automatically 
fulfilled, for example, in Subsection~\ref{ss:cb2t} for a class of positive semidefinite kernels
having a certain property of m-topologisability, or the case when $\cH$ is a Hilbert module over a
locally $C^*$-algebra, see in Subsection~\ref{ss:kvaola}.

\subsection{Dilation in VH-Spaces}\label{ss:dvhs}
Let $\cH$ be a VH-space over an admissible space $Z$, let $\fk\colon X\times
X\ra\cL_c^*(\cH)$ be a kernel on some nonempty set $X$, and let $\Gamma$ be a
$*$-semigroup that acts at left on $X$. As in the case of VE-space operator
valued kernels, we call $\fk$ \emph{$\Gamma$-invariant} if
\begin{equation}\fk(\xi\cdot x,y)=\fk(x,\xi^*\cdot y),\quad
  \xi\in\Gamma,\ x,y\in X.
\end{equation} 
A triple $(\cK;\pi;V)$ is called
a \emph{$\Gamma$-invariant VH-space linearisation} for $\fk$ if
\begin{itemize}
\item[(ihl1)] $(\cK;V)$ is a VH-space linearisation of $\fk$.
\item[(ihl2)] $\pi\colon \Gamma\ra\cL^*_c(\cK)$ is a $*$-representation.
\item[(ihl3)] $V(\xi\cdot x)=\pi(\xi)V(x)$ for all $\xi\in\Gamma$ and all $x\in X$.
\end{itemize}
Also, $(\cK;\pi;V)$ is \emph{minimal} if the VH-space linearisation $(\cK;V)$ is
minimal, that is, $\cK$ is the closure of the linear span of $V(X)\cH$.

\begin{remark}\label{r:mininv} Let $(\cK;\pi;V)$ be a $\Gamma$-invariant
  VH-space linearisation for the positive semidefinite
  kernel $\fk\colon X\times X\ra\cL_c^*(\cH)$ and consider the minimal VH-space
  linearisation $(\cK_0;V_0)$
  as in Proposition~\ref{p:vhlvsrk}, that is, $\cK_0$ is the closure of the
  linear span of $V(X)\cH$ and $V_0\colon X\ra\cL_c^*(\cH,\cK_0)$ is defined
  by $V_0(x)h=V(x)$, for all $x\in X$ and all $h\in\cH$. We observe that for
  every $\xi\in \Gamma$, the operator $\pi(\xi)$ leaves $\cK_0$ invariant: for
  any $x\in X$ and any $h\in\cH$, by (ihl3) we have $\pi(\xi)V(x)h=V(\xi\cdot
  x)h\in \cK_0$, and then use linearity and continuity of $\pi(\xi)$. 
Thus, we can define $\pi_0\colon \Gamma\ra\cL_c^*(\cK_0)$ by
$\pi_0(\xi)k=\pi(\xi)k\in\cK_0$ for any $\xi\in\Gamma$ and any
  $k\in\cK_0$. Then, it is easy to see that $(\cK_0;\pi_0;V_0)$ is a minimal
  $\Gamma$-invariant linearisation of $\fk$. 
\end{remark}

The following is a topological version of Theorem \ref{t:vhinvkolmo}. 

\begin{theorem} \label{t:vhinvkolmo2} 
Let $\Gamma$ be a $*$-semigroup that acts 
on the nonempty set $X$ and let $\fk\colon X\times X\ra\cL_{c}^*(\cH)$ 
be a kernel, 
for some VH-space $\cH$ over an admissible space $Z$. 
Then the following assertions are equivalent:

\begin{itemize}
\item[(1)] $\fk$ is positive semidefinite, in the sense of \eqref{e:npos}, 
and invariant under the action of $\Gamma$ on $X$, that is, 
\eqref{e:invariant} holds, and, in addition, the following conditions hold:
\begin{itemize}
\item[(b1)] For any $\xi \in \Gamma$ and 
any seminorm $p \in S(Z)$, there exists a seminorm 
$q \in S(Z)$ and 
a constant $c_p(\xi)\geq 0$ such that for all $n \in \NN$,
$\{h_i \}_{i=1}^{n} \in \cH$, $\{ x_i \}_{i=1}^{n} \in X$ we have
\begin{equation*}
p ( \sum_{i,j=1}^{n} [ \fk(\xi\cdot x_i, \xi\cdot x_j)h_j, h_i ]_{\cH} )
\leq c_p(\xi)\,  q( \sum_{i,j=1}^{n} 
[ \fk(x_i, x_j)h_j, h_i ]_{\cH} ). 
\end{equation*} 
\item[(b2)] For any $x \in X$ and any seminorm $p \in S(Z)$, 
 there exists 
a seminorm 
$q \in S(Z)$ and 
a constant $c_p(x)\geq 0$ such that for all 
$n \in \NN$, $\{ y_i \}_{i=1}^{n} \in X$, 
$\{ h_i \}_{i=1}^{n} \in \cH$ we have
\begin{equation*}
p ( \sum_{i,j=1}^{n} [ \fk(x,y_i)h_i, \fk(x, y_j)h_j ]_{\cH} )
\leq c_p(x)\,  q ( \sum_{i,j=1}^{n} 
[ \fk(y_j, y_i)h_i, h_j ]_{\cH} ). 
\end{equation*}
\end{itemize}
\item[(2)] $\fk$ has a $\Gamma$-invariant VH-space
linearisation $(\cK;\pi;V)$.
\item[(3)] $\fk$ admits an $\cH$-reproducing kernel VH-space $\cR$ and 
there exists a $*$-representation $\rho\colon \Gamma\ra\cL_{c}^*(\cR)$ such that 
$\rho(\xi)\fk_xh=\fk_{\xi\cdot x}h$ for all $\xi\in\Gamma$, $x\in X$, $h\in\cH$.
\end{itemize}

In addition, in case any of the assertions \emph{(1)}, \emph{(2)}, or
\emph{(3)} holds,  
then a minimal\, $\Gamma$-invariant VH-space linearisation of $\fk$ can be 
constructed, any minimal $\Gamma$-invariant VH-space linearisation of $\fk$ 
is unique up to unitary equivalence, 
and the pair $(\cR;\rho)$ as in assertion \emph{(3)} is uniquely determined by 
$\fk$ as well.
\end{theorem}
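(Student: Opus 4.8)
The plan is to establish the cycle of implications $(1)\Ra(2)\Ra(3)\Ra(1)$, building throughout on the nontopological Theorem~\ref{t:vhinvkolmo} together with its explicit construction. The guiding observation is that, once positive semidefiniteness and invariance are assumed, Theorem~\ref{t:vhinvkolmo} already supplies all of the \emph{algebraic} data, namely a minimal VE-space linearisation $(\cG;\pi;V)$ on the range $\cG$ of the convolution operator $K$ with $\pi$ a $*$-representation, and the only genuinely new content of conditions (b1) and (b2) is that they are exactly the reformulations, via the gramian $[\cdot,\cdot]_\cG$ on $\cG$, of the continuity of $\pi(\xi)$ and of the evaluation operators $V(x)^*$, respectively. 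Thus the whole proof reduces to translating (b1) and (b2) into continuity statements and back, together with a completion argument.

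For $(1)\Ra(2)$ I would start from $(\cG;\pi;V)$ with $V(x)h=Kh_x$, $V(x)^*f=f(x)$, and $(\pi(\xi)f)(y)=f(\xi^*\cdot y)$. First, $V(x)$ is automatically continuous: since $[V(x)h,V(x)h]_\cG=[\fk(x,x)h,h]_\cH$ and $\fk(x,x)=V(x)^*V(x)$ is a positive element of $\cL_c^*(\cH)$, Lemma~\ref{l:redunt} gives the required bound. Next, writing $f=Kg$ with $g=\sum_i\delta_{y_i}h_i$, the identities \eqref{e:ipfezero} and \eqref{e:vexa} yield $[V(x)^*f,V(x)^*f]_\cH=\sum_{i,j}[\fk(x,y_i)h_i,\fk(x,y_j)h_j]_\cH$ and $[f,f]_\cG=\sum_{i,j}[\fk(y_j,y_i)h_i,h_j]_\cH$, so by Remark~\ref{r:continuity} condition (b2) is \emph{equivalent} to continuity of $V(x)^*$; with the previous step this gives $V(x)\in\cL_c^*(\cH,\cG)$. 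Using invariance \eqref{e:invariant} one checks $\pi(\xi)Kg=Kg'$ with $g'=\sum_i\delta_{\xi\cdot x_i}h_i$, whence $[\pi(\xi)f,\pi(\xi)f]_\cG=\sum_{i,j}[\fk(\xi\cdot x_i,\xi\cdot x_j)h_j,h_i]_\cH$; thus (b1) is precisely continuity of $\pi(\xi)$, and continuity of $\pi(\xi)^*=\pi(\xi^*)$ follows by applying (b1) to $\xi^*$, giving $\pi(\xi)\in\cL_c^*(\cG)$. Finally I would complete $\cG$ to a VH-space $\cK$ and extend $V(x)$, $\pi(\xi)$ by uniform continuity; the adjoint relations persist because the gramian is jointly continuous (Lemma~\ref{l:topology}), producing a minimal $\Gamma$-invariant VH-space linearisation.

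For $(2)\Ra(3)$ I would reduce to the minimal linearisation $(\cK_0;\pi_0;V_0)$ via Remark~\ref{r:mininv}, then invoke Proposition~\ref{p:vhlvsrk}(2) to obtain the minimal $\cH$-reproducing kernel VH-space $\cR=\{V_0(\cdot)^*f\mid f\in\cK_0\}$ with unitary $U\colon\cK_0\ra\cR$, $Uf=V_0(\cdot)^*f$. Setting $\rho(\xi)=U\pi_0(\xi)U^*$ gives a $*$-representation into $\cL_c^*(\cR)$, and since $\fk_xh=UV_0(x)h$ the relation $\rho(\xi)\fk_xh=U\pi_0(\xi)V_0(x)h=UV_0(\xi\cdot x)h=\fk_{\xi\cdot x}h$ follows from (ihl3). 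For $(3)\Ra(1)$, positive semidefiniteness is Lemma~\ref{l:mincont}(3), and invariance follows from $\fk(x,y)=\fk_x^*\fk_y$ (Lemma~\ref{l:mincont}(2)) together with $\fk_{\xi\cdot x}=\rho(\xi)\fk_x$ and $\rho(\xi)^*=\rho(\xi^*)$; condition (b1) is read off from $\sum_{i,j}[\fk(\xi\cdot x_i,\xi\cdot x_j)h_j,h_i]_\cH=[\rho(\xi)f,\rho(\xi)f]_\cR$ with $f=\sum_i\fk_{x_i}h_i$ using continuity of $\rho(\xi)$, and (b2) from $\sum_{i,j}[\fk(x,y_i)h_i,\fk(x,y_j)h_j]_\cH=[\fk_x^*g,\fk_x^*g]_\cH$ with $g=\sum_i\fk_{y_i}h_i$ using continuity of the evaluation $\fk_x^*=E_x$ (axiom (rk4), cf.\ Lemma~\ref{l:mincont}(1)). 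The uniqueness up to unitary equivalence and the minimal construction then follow exactly as in the VE-space case, since unitaries are continuous.

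The main obstacle I anticipate is the bookkeeping in the equivalences ``(b1) $\Leftrightarrow$ continuity of $\pi(\xi)$'' and ``(b2) $\Leftrightarrow$ continuity of $V(x)^*$'': one must match the double sums in (b1), (b2) with $[\cdot,\cdot]_\cG$ evaluated on $f=Kg$, which relies on the precise identities \eqref{e:ipfezero}, \eqref{e:vexa} and on invariance \eqref{e:invariant} to rewrite $\pi(\xi)Kg$ as a convolution. A conceptually important subtlety is that continuity of $V(x)$ is automatic (via Lemma~\ref{l:redunt}) whereas continuity of the adjoint $V(x)^*$ is not, which is exactly the gap that (b2) is designed to fill; dually, continuity of $\pi(\xi)^*$ is not free either but is recovered from (b1) applied to $\xi^*$. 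Keeping these asymmetries straight, and checking that adjointability survives the completion, is where the care is needed.
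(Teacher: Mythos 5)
Your proposal is correct and follows essentially the same route as the paper's own proof: the same minimal VE-space construction from Theorem~\ref{t:vhinvkolmo}, Lemma~\ref{l:redunt} for the automatic continuity of $V(x)$, the reading of (b1) and (b2) as continuity of $\pi(\xi)$ and of $V(x)^*$ on $\cG$ followed by completion, and Proposition~\ref{p:vhlvsrk} with the unitary $U$ for the passage to the reproducing kernel VH-space. The only difference is organizational: the paper proves the two bi-implications (1)$\Leftrightarrow$(2) and (2)$\Leftrightarrow$(3), whereas you close a single cycle with a direct (3)$\Ra$(1) that simply composes the paper's (3)$\Ra$(2) and (2)$\Ra$(1) computations.
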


\begin{proof}
(1)$\Ra$(2).
We consider the notation and the minimal $\Gamma$-invariant VE-space
linearisation $(\cG;V;\pi)$ defined as in \eqref{e:fezero}--\eqref{e:dop}.
Consider the VE-space $( \cG;[\cdot,\cdot]_{\cG} )$ 
with its natural topology defined as in Subsection \ref{ss:vhs}. 
We show that, for all $\xi\in\Gamma$, 
$\pi(\xi)$ is continuous
as a linear operator on the locally convex space $\cG$.
By the boundedness condition (b1), for any 
$p \in S(Z)$ there exists $q\in S(Z)$ and $c_p(\xi)\geq 0$ such that, for all
$f\in\cG$, we have
\begin{align*} p ([\pi(\xi)f,\pi(\xi)f]_{\cG} )
 & = p ([\pi(\xi^*)\pi(\xi)f,f]_{\cG} ) = p ([\pi(\xi^*
\xi)f,f]_{\cG} ) \\
& = p ( \sum_{x,y\in X} [ 
\fk(\xi^*\xi\cdot y,x)g(x), g(y) ]_{\cH} ) \\
& = p (\sum_{x,y\in X} [ 
\fk(\xi\cdot y,\xi\cdot x)g(x), g(y) ]_{\cH}) \\
& \leq c_p(\xi)\, q ( \sum_{x,y\in X} 
[\fk(y,x)g(x), g(y) ]_{\cH} ) \\
& = c_p(\xi)\, q ([f,f]_{\cG} ),
\end{align*}
where $f=Kg$ for some $g \in \cF_{0}$. 
Hence the continuity of $\pi(\xi)$ is proven.  

Let $\cK$ be the VH-space completion of the VE-space 
$\cG$. It follows that $\pi(\xi)$ extends uniquely to 
a continuous operator on $\cK$ and that $\pi$ is a $*$-representation of
$\Gamma$ in $\cL_{c}^*(\cK)$.

We now show that all the operators $V(x)$ defined as in \eqref{e:defvex}
are continuous as linear operators defined on $\cH$ and with values in $\cG$. 
Fix $x\in X$ and $p \in S(Z)$, but arbitrary. 
By Lemma \ref{l:redunt}, for some $q\in S(Z)$ and $c_p(x)\geq 0$, for all
$h\in\cH$  we have
\begin{align*}
 p ( [ V(x)h, V(x)h ]_{\cG} ) & =
p ( [ V(x)^{*}V(x)h, h ]_{\cH} ) \\
&=p ( [ \fk(x,x)h, h ]_{\cH} ) 
\leq c_p(x)\,q ( [ h, h ]_{\cH} ). 
\end{align*}
This proves the continuity of $V(x)$. 

On the other hand, the operators $V(x)^{*}$ 
obtained as in Theorem~\ref{t:vhinvkolmo} are 
continuous on $\cG$ for all $x \in X$. To see this,  using 
the boundedness condition (b2), for any 
$p \in S(Z)$ there exist $q\in S(Z)$ and $c_p(x)\geq 0$ such that, for all
$f\in \cG$ we have 
\begin{align*}
 p ( [ V(x)^{*}f, V(x)^{*}f ]_{\cH} ) & =
p ( [ f(x), f(x) ]_{\cH} )\\
& = p ( \sum_{y,z \in X}[ \fk(x,y)g(y), \fk(x,z)g(z) ]_{\cH} ) \\
& \leq c_p(x) q ( \sum_{y,z \in X} 
[ \fk(z,y)g(y), g(z) ]_{\cH} ) 
= c_p(x)\, q ( 
[ f, f ]_{\cG} ), 
\end{align*}
where $Kg=f$ for some $g\in\cF_0$. Hence
$V(x)^*$ is continuous and,
consequently, it extends uniquely to 
a continuous operator $V(x)^{*}\colon \cK \ra \cH$. 
A continuity argument establishes the fact 
that $V(x)\colon\cH\ra\cK$ is adjointable with adjoint 
$V(x)^{*}\colon\cK\ra\cH$. Hence $V(x) \in \cL_{c}^*(\cH,\cK)$. 
By \eqref{e:vexa} we obtain $V(y)^{*}V(x)=\fk(y,x)$
for all $x,y \in X$, and by
\eqref{e:vexic} $\pi(\xi)V(x) = V(\xi \cdot x)$ for
all $\xi \in \Gamma$ and $x \in X$. 
Therefore $(\cK;\pi;V)$ is a $\Gamma$-invariant
VH-space linearisation of $\fk$. Clearly,
it is minimal. 
The uniqueness of the minimal invariant VH-space 
linearisation follows as usually.

(2)$\Ra$(1). Let $(\cK;\pi;V)$ be a $\Gamma$-invariant
VH-space linearisation of $\fk$. We already know from
Theorem \ref{t:vhinvkolmo} that
$\fk$ is positive semidefinite and that $\fk$ is
invariant under the action of $\Gamma$
on $X$. To show that (b1) holds, letting $p \in S(Z)$ 
be a seminorm and $\xi\in\Gamma$,  since the operator
$\pi(\xi)$ is continuous, there exist
$q\in S(Z)$ and $c_p(\xi)\geq 0$, such that,
for all
$n \in \NN$,
$\{h_i \}_{i=1}^{n} \in \cH$, $\{ x_i \}_{i=1}^{n} \in X$, we have
\begin{align*}
p ( \sum_{i,j=1}^{n} [ \fk(\xi\cdot x_i, \xi\cdot x_j)h_j,
 h_i ]_{\cH} )
&=p ( \sum_{i,j=1}^{n} [ V(\xi\cdot x_i)^{*}
V(\xi\cdot x_j)h_j, h_i ]_{\cH} ) \\
&=p ( \sum_{i,j=1}^{n} [
V(\xi\cdot x_j)h_j, V(\xi\cdot x_i)h_i ]_{\cK} ) \\
&=p ( [
\pi(\xi)(\sum_{j=1}^{n}V(x_j)h_j ), 
\pi(\xi)(\sum_{i=1}^{n}V(x_i)h_i ) ]_{\cK} ) \\
& \leq c_p(\xi)\, q ( [ \sum_{j=1}^{n}V(x_j)h_j,
\sum_{i=1}^{n}V(x_i)h_i ]_{\cK} ) \\
& = c_p(\xi)\, q( \sum_{i,j=1}^{n} 
[ \fk(x_i, x_j)h_j, h_i ]_{\cH} ).
\end{align*}

We show that (b2) holds. 
Let $x \in X$ and $p \in S(Z)$ be fixed. Since the operator
$V(x)^*\in\cL(\cK,\cH)$ is continuous, for some $q\in S(Z)$ and $c_p(x)\geq 0$,
and arbitrary $n \in \NN$, $\{ y_i \}_{i=1}^{n} \in X$,
$\{ h_i \}_{i=1}^{n} \in \cH$, we have
\begin{align*}
p ( \sum_{i,j=1}^{n} [ \fk(x,y_i)h_i, 
\fk(x, y_j)h_j ]_{\cH} )
&= p ( [ V(x)^{*}(\sum_{i=1}^{n}V(y_i)h_i ), 
V(x)^{*} ( \sum_{j=1}^{n}V(y_j)h_j ) ]_{\cH} ) \\
& \leq c_p(x)\, q ( [ \sum_{i=1}^{n}V(y_i)h_i,
\sum_{j=1}^{n}V(y_j)h_j ]_{\cH} ) \\
&= c_p(x)\, q ( \sum_{i,j=1}^{n} 
[ \fk(y_j, y_i)h_i, h_j ]_{\cH} ). 
\end{align*}

(2)$\Ra$(3). Basically, this is a consequence of Proposition~\ref{p:lvsrk}.
Here are the details.
Let $(\cK;\pi;V)$ be a minimal $\Gamma$-invariant VH-space linearisation
of the kernel $\fk$ and the action of $\Gamma$ on $X$.
Defining $\cR$ as in \eqref{e:rehdef}, from Proposition~\ref{p:lvsrk} it 
follows that $\cR$ has a natural structure of minimal $\cH$-reproducing
kernel VH-space with reproducing kernel $\fk$.
Letting $\rho(\xi)=U\pi(\xi)U^{-1}$, 
where $U\colon\cK\ra\cR$ is 
the unitary operator defined as in \eqref{e:defu}, we obtain a 
$*$-representation of 
$\Gamma$ on the VH-space $\cR$ such that 
$\fk_{\xi\cdot x}=\rho(\xi)\fk_x$ for all 
$\xi\in\Gamma$ and $x\in X$. By
continuity of $\pi(\xi)$ for any $\xi \in \Gamma$, 
$\rho(\xi)$ is continuous for any $\xi \in \Gamma$
as well.

(3)$\Ra$(2). Assume that $(\cR;[\cdot,\cdot]_\cR)$ is an $\cH$-reproducing 
kernel VH-space on $X$, with reproducing kernel $\fk$
and $\rho\colon\Gamma\ra\cL_{c}^*(\cR)$ is a $*$-representation 
such that 
$\fk_{\xi\cdot x}=\rho(\xi)\fk_x$ for all $\xi\in\Gamma$ and $x\in X$.
 We let $\cK=\cR$ and define $V(x) \colon \cH \ra \cK$ by
\begin{equation*}V(x)h=\fk_x h,\quad x\in X,\ h\in \cH.
\end{equation*}
By Proposition~\ref{p:lvsrk}, it follows that 
$(\cK;V)$ is a VH-space linearisation of $\fk$. 
Then, letting $\pi=\rho$, $(\cK;\pi;V)$ is a minimal $\Gamma$-invariant 
VH-space linearisation of $\fk$.
\end{proof}

\begin{remarks}\label{r:czero}
(1) With notation as in Theorem~\ref{t:vhinvkolmo2}, let $\cC_0$ be a 
family of open, absolutely convex and solid neighbourhoods 
of the origin defining the topology of $Z$, 
and let $S_{\cC_0}(Z)=\{p_C\mid C\in\cC_0\}$ be defined as in 
Section~\ref{ss:as}. Then, the boundedness conditions (b1) and (b2) in the
assertion (1) of 
Theorem~\ref{t:vhinvkolmo2} can, equivalently, be stated only for all $p\in
S_{\cC_0}(Z)$. 

(2) In the particular case when $\Gamma$ is a group and $\xi^*=\xi^{-1}$ for
all $\xi\in\Gamma$, the  boundedness condition (i) in assertion (1) is 
always fulfilled, due to the $\Gamma$-invariance of the kernel $\fk$. 
\end{remarks}

As a consequence of Theorem~\ref{t:vhinvkolmo2}, for a given positive
semidefinite $\cH$-kernel $\fk$, we can show that the boundedness condition
(b2) is necessary and sufficient for the existence of a VH-space linearisation
and, equivalently, for the existence of an $\cH$-reproducing kernel VH-space
associated to $\fk$.

\begin{corollary}\label{c:evhsl}
Let $\fk$ be a positive semidefinite $\cH$-kernel on $X$, for some VH-space
$\cH$ over an admissible space $Z$. Then, the following assertions are
equivalent:
\begin{itemize}
\item[(1)] The following condition holds:
\begin{itemize}
\item[(b2)] For any $x \in X$ and any seminorm $p \in S(Z)$, there exists 
a seminorm $q \in S(Z)$ and 
a constant $c_p(x)\geq 0$ such that for all 
$n \in \NN$, $\{ y_i \}_{i=1}^{n} \in X$, 
$\{ h_i \}_{i=1}^{n} \in \cH$ we have
\begin{equation*}
p ( \sum_{i,j=1}^{n} [ \fk(x,y_i)h_i, \fk(x, y_j)h_j ]_{\cH} )
\leq c_p(x)\,  q ( \sum_{i,j=1}^{n} 
[ \fk(y_j, y_i)h_i, h_j ]_{\cH} ). 
\end{equation*}
\end{itemize}
\item[(2)] $\fk$ has a VH-space
linearisation $(\cK;V)$.
\item[(3)] $\fk$ admits an $\cH$-reproducing kernel VH-space $\cR$.
\end{itemize}
\end{corollary}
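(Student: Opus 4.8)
The plan is to deduce the corollary directly from Theorem~\ref{t:vhinvkolmo2} by specialising to the trivial $*$-semigroup. I would take $\Gamma=\{\epsilon\}$, the one-element group, with involution $\epsilon^*=\epsilon$, acting on $X$ by $\epsilon\cdot x=x$ for every $x\in X$. With this choice the invariance requirement \eqref{e:invariant} becomes the tautology $\fk(y,x)=\fk(y,x)$, so it holds for free; together with the standing hypothesis that $\fk$ is positive semidefinite, the ``positive semidefinite and invariant'' part of assertion (1) of Theorem~\ref{t:vhinvkolmo2} is therefore automatic.

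Next I would observe that the boundedness condition (b1) of Theorem~\ref{t:vhinvkolmo2} is vacuous for this $\Gamma$: taking $\xi=\epsilon$ gives $\fk(\epsilon\cdot x_i,\epsilon\cdot x_j)=\fk(x_i,x_j)$, so both sides of the (b1)-inequality coincide and the condition holds with $q=p$ and $c_p(\epsilon)=1$. Consequently, for $\Gamma=\{\epsilon\}$, assertion (1) of Theorem~\ref{t:vhinvkolmo2} reduces to exactly ``$\fk$ is positive semidefinite and (b2) holds'', which, under our standing assumption, is precisely assertion (1) of the corollary.

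It then remains only to match assertions (2) and (3) of the theorem with those of the corollary. For $\Gamma=\{\epsilon\}$ a $\Gamma$-invariant VH-space linearisation $(\cK;\pi;V)$ carries no additional information: given a VH-space linearisation $(\cK;V)$ one sets $\pi(\epsilon)=\mathrm{Id}_\cK\in\cL_{c}^*(\cK)$, which is a $*$-representation and makes (ihl3) read $V(x)=\mathrm{Id}_\cK\,V(x)$, hence trivially true; conversely, forgetting $\pi$ turns a $\{\epsilon\}$-invariant linearisation back into a linearisation. Thus (2) of the theorem is equivalent to (2) here. Similarly, in (3) of the theorem the representation $\rho\colon\{\epsilon\}\ra\cL_{c}^*(\cR)$ is forced to be $\rho(\epsilon)=\mathrm{Id}_\cR$, and the intertwining relation $\rho(\epsilon)\fk_xh=\fk_{\epsilon\cdot x}h=\fk_xh$ holds automatically, so the existence of such a pair $(\cR;\rho)$ is the same as the bare existence of an $\cH$-reproducing kernel VH-space $\cR$ with kernel $\fk$, i.e.\ assertion (3) here. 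I expect no genuine obstacle: the whole content is already in Theorem~\ref{t:vhinvkolmo2}, and the only things to verify are the two routine trivialisations above, after which the chain (1)$\LRa$(2)$\LRa$(3) of the theorem immediately yields that of the corollary.
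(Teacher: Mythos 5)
Your proposal is correct and is essentially the paper's own argument: the paper states Corollary~\ref{c:evhsl} as a direct consequence of Theorem~\ref{t:vhinvkolmo2}, obtained precisely by specialising to the trivial $*$-semigroup $\Gamma=\{\epsilon\}$ with the unital action (the same device the paper itself uses in the proof of Lemma~\ref{l:schwarzforposker}), under which invariance and (b1) trivialise and $\pi$, $\rho$ carry no information. One pedantic remark: $\rho(\epsilon)$ is not literally \emph{forced} to equal $\mathrm{Id}_\cR$ (a $*$-representation of $\{\epsilon\}$ need only send $\epsilon$ to a selfadjoint idempotent fixing the functions $\fk_xh$, and without minimality these need not span a dense subspace), but this does not matter since your equivalence only uses that one \emph{may} choose $\rho(\epsilon)=\mathrm{Id}_\cR$ in one direction and discard $\rho$ in the other.
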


\subsection{Condition (b2) of Theorem \ref{t:vhinvkolmo2}.}\label{ss:cb2t}
Condition (b2) of Theorem \ref{t:vhinvkolmo2} for a positive semidefinite 
kernel can be considered as a weaker version of an 
inequality for positive semidefinite kernels taking values 
in $\cB^*(\cH)$, obtained in Proposition 3.2. of \cite{Gheondea}, 
where $\cH$ is a VH-space and $\cB^*(\cH)$ is the $C^*$-algebra of all
adjointable and bounded, in Loynes sense, operators 
on $\cH$, cf.\ \cite{Loynes1}. Consequently, it is natural to ask to which extent of generality 
condition (b2) is automatically satisfied or not.
Here, we show a rather general class of
$\cL_{c}^*(\cH)$ valued kernels, for an arbitrary topological VE-space $\cH$,
that guarantees the validity of condition (b2). 

We first prove a Schwarz type inequality for positive operators. This
inequality should be compared with that from Corollary~\ref{c:4schwarz} and
observed that here, in this particular case, 
the constant $4$ is improved to $1$.

\begin{lemma}\label{l:schwarzforposop}
Let $T \in \cL^*(\cH)$ be a positive operator on a topological VE-space $\cH$ 
over the topologically ordered $*$-space $Z$. Let $p\in S(Z)$. Then 
\begin{equation*}
p([Th,h] )\leq p([Th,Th] )^{\frac{1}{2}}
p([h,h] )^{\frac{1}{2}},\quad
h\in\cH.
\end{equation*} 
\end{lemma}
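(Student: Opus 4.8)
The plan is to avoid the polarisation route used in Corollary~\ref{c:4schwarz}, which is exactly what produces the constant $4$, and instead to exploit the special relationship between the two vectors $Th$ and $h$ coming from the positivity of $T$. First I would record that, since $T$ is positive, it is selfadjoint (as noted in Subsection~\ref{ss:vestlo}) and $\alpha:=[Th,h]_\cH\geq 0$; in particular $\alpha$ is a selfadjoint element of $Z$, so that $[h,Th]_\cH=[Th,h]_\cH^*=\alpha$ as well. Writing $\beta:=[Th,Th]_\cH$ and $\gamma:=[h,h]_\cH$, both positive by (ve1), the asserted inequality becomes $p(\alpha)\leq p(\beta)^{1/2}p(\gamma)^{1/2}$.

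The key step is to feed the correct vector into the gramian positivity axiom. For every real $t$ I would expand
\begin{equation*}
0\leq [Th-th,Th-th]_\cH=[Th,Th]_\cH-2t[Th,h]_\cH+t^2[h,h]_\cH=\beta-2t\alpha+t^2\gamma,
\end{equation*}
where the two cross terms combine because $[Th,h]_\cH=[h,Th]_\cH=\alpha$ and $t$ is real. Note that this computation uses only (ve1)--(ve3) together with the selfadjointness of $T$; the positivity of $T$ enters only in the next step, where it guarantees $\alpha\geq 0$.

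From $\beta-2t\alpha+t^2\gamma\geq 0$ I would deduce, for each $t>0$, the order relation $0\leq 2t\alpha\leq \beta+t^2\gamma$ in $Z$, the left-hand inequality being precisely the positivity of $\alpha$. Applying the increasing seminorm $p\in S(Z)$ (see Remark~\ref{r:increasing}) and then its subadditivity and positive homogeneity yields
\begin{equation*}
2t\,p(\alpha)=p(2t\alpha)\leq p(\beta+t^2\gamma)\leq p(\beta)+t^2p(\gamma),\quad t>0.
\end{equation*}
It then remains to optimise over $t$: dividing by $2t$ gives $p(\alpha)\leq \tfrac12\bigl(p(\beta)/t+t\,p(\gamma)\bigr)$, and the elementary bound $\inf_{t>0}\bigl(A/t+Bt\bigr)=2\sqrt{AB}$ for $A,B\geq 0$ produces $p(\alpha)\leq p(\beta)^{1/2}p(\gamma)^{1/2}$, as required.

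I do not expect any serious obstacle here; the only subtlety is the bookkeeping with the increasing seminorm, namely ensuring that $2t\alpha$, $\beta$ and $\gamma$ are genuinely positive so that monotonicity of $p$ applies, and this is exactly where the hypothesis that $T$ is positive (not merely selfadjoint) is used. The degenerate cases $p(\gamma)=0$ or $p(\beta)=0$ are absorbed automatically by the infimum, since letting $t\to\infty$ or $t\to 0^+$ in the displayed estimate forces $p(\alpha)=0$. Conceptually, the improvement of the constant from $4$ to $1$ is explained by the fact that the two arguments of the gramian are not independent but linked through the positive operator $T$, which is precisely what makes the single-parameter inequality $\beta-2t\alpha+t^2\gamma\geq 0$ available.
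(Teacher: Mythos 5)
Your proof is correct and follows essentially the same route as the paper: expand the positive quantity $[h-\lambda Th,h-\lambda Th]_\cH$ (your $[Th-th,Th-th]_\cH$, the same up to the reparametrisation $t=1/\lambda$), use positivity of $T$ to ensure $[Th,h]_\cH\geq 0$ so that the increasing seminorm $p$ and its subadditivity can be applied, and then optimise over the parameter to obtain the geometric mean. The bookkeeping of the degenerate cases and the role of positivity are exactly as in the paper's argument.
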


\begin{proof}
For any $h_1,h_2 \in \cH$ and  any number $\lambda >0$ we have 
\begin{align*}
0 & \leq [h_1-\lambda h_2,h_1-\lambda h_2]  \\
& =[h_1,h_1]+\lambda^2
[h_2,h_2]-\lambda[h_2,h_1] -\lambda[h_1,h_2] . 
\end{align*} 
By definition of the partial ordering on $Z$ and dividing by $\lambda$ we
obtain  
\begin{equation}\label{e:homogineq}
[h_2,h_1] +[h_1,h_2]  \leq
\frac{1}{\lambda}[h_1,h_1] +\lambda[h_2,h_2] . 
\end{equation}
Letting $h_1:=h$, $h_2:=Th$ in \eqref{e:homogineq}, since $T$ is positive,
applying $p$ on both sides of \eqref{e:homogineq}, and taking into account
that $p$ is increasing, we obtain 
\begin{equation}\label{e:homogineq2}
2p([Th,h] ) \leq \frac{1}{\lambda}p([h,h] )+\lambda
p([Th,Th] ),\quad h\in\cH.
\end{equation}  
Since the left side of \eqref{e:homogineq2} does not depend on $\lambda$
it follows that
\begin{equation*}p([Th,h])\leq \frac{1}{2}\inf_{\lambda>0} \bigl(
\frac{1}{\lambda}p([h,h] )+\lambda p([Th,Th])\bigr)=p([Th,Th] )^{\frac{1}{2}}
p([h,h] )^{\frac{1}{2}},
\end{equation*} which is the required inequality.
\end{proof}

We now reformulate Lemma \ref{l:schwarzforposop} in case of a
positive semidefinite kernel. 

\begin{lemma}\label{l:schwarzforposker}
Let $\fk\colon X\times X\ra\cL^*(\cH)$ be a positive semidefinite kernel, 
where $\cH$ is a 
topological VE-space  over a topologically ordered $*$-space $Z$. Then 
for every $p\in S(Z)$, $n\in\NN$, $x, \left \{ y_i \right\}_{i=1}^{n}\in X$ 
and $\left \{ h_i \right\}_{i=1}^{n}\in\cH$ we have the inequality  
\begin{align*}
&p(\sum_{i,j=1}^n[\fk(x,y_i)h_i,\fk(x,y_j)h_j] ) \\ &\phantom{abcd}\leq 
p(\sum_{i,j=1}^n[\fk(x,x)\fk(x,y_i)h_i,\fk(x,y_j)h_j] )^{\frac{1}{2}} \
p(\sum_{i,j=1}^n[\fk(y_i,y_j)h_j,h_i] )^{\frac{1}{2}}.
\end{align*}  
\end{lemma}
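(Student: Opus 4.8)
The plan is to reduce the statement to Lemma~\ref{l:schwarzforposop} by passing to a VE-space linearisation of $\fk$. Since $\fk$ is positive semidefinite and valued in $\cL^*(\cH)$, Theorem~\ref{t:vhinvkolmo} (applied with the trivial $*$-semigroup, or equivalently the Kolmogorov construction recalled just after it) furnishes a VE-space linearisation $(\cK;V)$ of $\fk$: a VE-space $\cK$ over $Z$ together with a map $V\colon X\ra\cL^*(\cH,\cK)$ such that $\fk(y,x)=V(y)^*V(x)$ for all $x,y\in X$. Because $Z$ is topologically ordered, $\cK$ carries its natural topology $\tau_\cK$ and is thus a topological VE-space over $Z$, so that Lemma~\ref{l:schwarzforposop} is legitimately available on $\cK$. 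Note that this linearisation exists for \emph{any} positive semidefinite kernel, with no boundedness hypothesis whatsoever, so there is no circularity with condition (b2).

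Next I would introduce the single vector $\xi:=\sum_{i=1}^n V(y_i)h_i\in\cK$ and the operator $S:=V(x)V(x)^*\in\cL^*(\cK)$, which is selfadjoint and positive since $[Sk,k]_\cK=[V(x)^*k,V(x)^*k]_\cH\geq 0$ for all $k\in\cK$. The crux is then to recognise each of the three sums occurring in the statement as a gramian in $\cK$ evaluated at $\xi$ and $S\xi$. Using $\fk(x,y_i)=V(x)^*V(y_i)$, $\fk(y_i,y_j)=V(y_i)^*V(y_j)$, and the conjugate-linearity/linearity of the gramian to collapse the double sums, I would verify the identities
\begin{align*}
\sum_{i,j=1}^n[\fk(x,y_i)h_i,\fk(x,y_j)h_j]_\cH &=[S\xi,\xi]_\cK,\\
\sum_{i,j=1}^n[\fk(x,x)\fk(x,y_i)h_i,\fk(x,y_j)h_j]_\cH &=[S\xi,S\xi]_\cK,\\
\sum_{i,j=1}^n[\fk(y_i,y_j)h_j,h_i]_\cH &=[\xi,\xi]_\cK.
\end{align*}
The third is immediate once $\fk(y_i,y_j)=V(y_i)^*V(y_j)$ is used to move $V(y_i)^*$ to the second slot. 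For the first, setting $g:=V(x)^*\xi=\sum_i\fk(x,y_i)h_i\in\cH$, both sides equal $[g,g]_\cH$, using $[S\xi,\xi]_\cK=[V(x)V(x)^*\xi,\xi]_\cK=[V(x)^*\xi,V(x)^*\xi]_\cH$. For the second, $S\xi=V(x)g$, whence $[S\xi,S\xi]_\cK=[V(x)g,V(x)g]_\cK=[g,V(x)^*V(x)g]_\cH=[\fk(x,x)g,g]_\cH$, the last equality because $\fk(x,x)=V(x)^*V(x)$ is selfadjoint; expanding $[\fk(x,x)g,g]_\cH$ back through $g=\sum_i\fk(x,y_i)h_i$ reproduces the second sum.

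With the three identities in hand, I would simply apply Lemma~\ref{l:schwarzforposop} to the positive operator $S$ and the vector $\xi\in\cK$, obtaining
\[
p([S\xi,\xi]_\cK)\leq p([S\xi,S\xi]_\cK)^{1/2}\,p([\xi,\xi]_\cK)^{1/2},
\]
which, after substituting the identities above, is precisely the asserted inequality. I do not anticipate a genuine obstacle: the whole content is the recognition that the seemingly three-term kernel inequality is nothing but the Schwarz inequality of Lemma~\ref{l:schwarzforposop} for $S=V(x)V(x)^*$ on $\cK$. The only points demanding care are the bookkeeping with the conjugate-linearity of the gramian when factoring each double sum through $\xi$ (respectively $g$), and the routine remark that $\cK$, being a VE-space over the topologically ordered space $Z$, is automatically a topological VE-space, so that the hypotheses of Lemma~\ref{l:schwarzforposop} are met on $\cK$ rather than on $\cH$.
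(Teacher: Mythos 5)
Your proposal is correct and follows essentially the same route as the paper's own proof: pass to the VE-space linearisation $(\cK;V)$ via Theorem~\ref{t:vhinvkolmo} with the trivial $*$-semigroup, take the positive operator $V(x)V(x)^*$ and the vector $\sum_{i=1}^n V(y_i)h_i$, and apply Lemma~\ref{l:schwarzforposop}. The only difference is that you spell out the three gramian identities explicitly, which the paper leaves as an implicit computation.
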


\begin{proof}
By Theorem \ref{t:vhinvkolmo}, there is a minimal VE-space linearisation 
$(\cK;\pi;V)$ of $\fk$, where $\Gamma=\{\epsilon\}$ 
is the trivial $*$-group and the unital action 
of $\Gamma$ on $X$. For each fixed $x\in X$, 
consider the positive operator $T:=V(x)V(x)^*\colon \cK\ra\cK$ 
and for arbitrary $\left \{ y_i \right\}_{i=1}^{n}\in X$ and 
all $\left \{ h_i \right\}_{i=1}^{n}\in \cH$ 
the corresponding element $h:=\sum_{i=1}^n V(y_i)h_i \in\cK$. Given any 
$p\in S(Z)$, by applying Lemma~\ref{l:schwarzforposop} for these
$T$ and $h$, and 
taking into account that $V(z)^*V(t)=\fk(z,t)$ for any 
$z,t\in X$, we obtain the required inequality.   
\end{proof}

For a topological VE-space $\cH$ over the topologically 
ordered $*$-space $Z$, following \cite{Zelazko}, an operator $T\in \cL(\cH)$ 
is called \emph{m-topologisable} if for every $p\in S(Z)$, there exists 
a constant $D_p\geq 0$ and a continuous seminorm $r$ on $\cH$ 
such that, for every $n \in \NN$ and every $h\in \cH$,
\begin{equation}\label{e:mtop}
\tl{p}(T^n h)=p([T^nh,T^nh])^{\frac{1}{2}} \leq D_p^n\, r(h).
\end{equation}
Observe that m-topologisable operators are those continuous linear operators $T\colon \cH\ra\cH$ 
for which there is a certain control of the growth of their powers uniformly on $\cH$.

The inequality in the following lemma, which can be viewed as a stronger version of 
the inequality from 
Lemma~\ref{l:redunt} for the special case of an m-topologisable positive operator,
is a generalisation of the celebrated
Krein-Reid-Lax-Dieudonn\'e inequality; the iteration method through which we obtain it
is following a similar idea as that in \cite{Krein}, \cite{Reid}, 
\cite{Lax}, and \cite{Dieudonne}.

\begin{lemma}\label{l:schwarzformposop}
Let $T \in \cL^*(\cH)$ be an m-topologisable positive operator on a topological VE-space $\cH$ 
over the topologically ordered $*$-space $Z$. Let $p\in S(Z)$. Then there 
is a constant $C$, depending only on $T$ and $p$, such that 
\begin{equation*}
p([Th,h] )\leq C\, p([h,h] ),\quad h\in \cH.
\end{equation*}
\end{lemma}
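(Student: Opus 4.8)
The plan is to run an iteration driven by the sharp Schwarz inequality of Lemma~\ref{l:schwarzforposop}, and then to control the resulting iterates by the m-topologisability bound \eqref{e:mtop}. First I would observe that, since a positive operator is automatically selfadjoint, every power $T^{2^k}$ is again positive (hence selfadjoint): indeed $[T^2h,h]=[Th,Th]\geq 0$ by (ve1), and one proceeds by induction, using that $(T^{2^k})^{*}=T^{2^k}$. Selfadjointness furnishes the crucial identity $[T^{2^k}h,T^{2^k}h]=[T^{2^{k+1}}h,h]$, which is exactly what makes the exponent double at each stage of the iteration.

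Next, fix $p\in S(Z)$ and $h\in\cH$ and abbreviate $A_k:=p([T^{2^k}h,h])$ and $B:=p([h,h])$. Applying Lemma~\ref{l:schwarzforposop} to the positive operator $T^{2^k}$ and the vector $h$, together with the identity above, gives $A_k\leq A_{k+1}^{1/2}B^{1/2}$. A routine induction then yields the telescoped estimate
\[
p([Th,h])=A_0\leq A_n^{1/2^n}\,B^{\,1-1/2^n},\qquad n\in\NN.
\]
This is the heart of the matter; everything afterwards is a question of taming $A_n^{1/2^n}$ as $n\to\infty$.

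To bound $A_n$ I would now use the coarser Schwarz inequality of Corollary~\ref{c:4schwarz}, obtaining $A_n\leq 4\,p([T^{2^n}h,T^{2^n}h])^{1/2}\,B^{1/2}$, and then feed in the m-topologisability estimate \eqref{e:mtop} associated with $p$, which supplies a constant $D_p\geq 0$ and a continuous seminorm $r$ on $\cH$ such that $p([T^{2^n}h,T^{2^n}h])^{1/2}\leq D_p^{2^n}r(h)$. Hence $A_n\leq 4\,D_p^{2^n}r(h)\,B^{1/2}$, so that $A_n^{1/2^n}\leq 4^{1/2^n}D_p\,r(h)^{1/2^n}B^{1/2^{n+1}}$. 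Substituting this into the telescoped estimate and collecting the powers of $B$ leaves
\[
p([Th,h])\leq 4^{1/2^n}\,D_p\,r(h)^{1/2^n}\,B^{\,1-1/2^{n+1}},\qquad n\in\NN.
\]

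Finally I would let $n\to\infty$. In the generic case $B>0$ and $r(h)>0$ the factors $4^{1/2^n}$ and $r(h)^{1/2^n}$ tend to $1$ while the exponent of $B$ tends to $1$, so the inequality collapses to $p([Th,h])\leq D_p\,B$, and $C:=D_p$ does the job. The step I expect to need the most care is the degenerate one, where $B=0$ or $r(h)=0$: if $B=0$ then Lemma~\ref{l:schwarzforposop} already forces $p([Th,h])=0$, and if $r(h)=0$ then \eqref{e:mtop} with $n=1$ gives $p([Th,Th])=0$, whence again $p([Th,h])=0$ by Lemma~\ref{l:schwarzforposop}; in both cases the asserted inequality holds trivially. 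Since $D_p$ depends only on $T$ and $p$, the constant $C:=D_p$ has the required property.
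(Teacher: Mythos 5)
Your proof is correct and follows essentially the same route as the paper's: both iterate the Schwarz inequality of Lemma~\ref{l:schwarzforposop} to double the powers of $T$, invoke the m-topologisability bound \eqref{e:mtop} to tame the top term, and pass to the limit, obtaining $C=D_p$. The only differences are cosmetic: your detour through Corollary~\ref{c:4schwarz} introduces a harmless factor $4^{1/2^n}\to 1$ that the paper avoids by keeping the term $p([T^{2^n}h,T^{2^n}h])^{1/2^{n+1}}$ inside the iteration, and you make explicit the degenerate cases $p([h,h])=0$ or $r(h)=0$, which the paper's limit argument leaves implicit.
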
 

\begin{proof}
Using the fact $[T^{2^n}h,T^{2^n}h] =[T^{2\,2^n}h,h] $ 
for any $h\in\cH$ and any $n\in\mathbb{N}$, 
as well as successively applying Lemma \ref{l:schwarzforposop}, we obtain 
\begin{align*}
p([Th,h] ) & \leq p([Th,Th] )^{\frac{1}{2}} p([h,h] )^{\frac{1}{2}} \\
& \leq p([T^2h,T^2h] )^{\frac{1}{4}} p([h,h] )^{\frac{1}{2}+\frac{1}{4}} \\
& \,\,\, \vdots \\
& \leq p([T^{2^n}h,T^{2^n}h] )^{\frac{1}{2^{n+1}}} 
p([h,h] )^{\frac{1}{2}+\frac{1}{4}+\cdots +\frac{1}{2^{n+1}}} \\
& \leq D_p^{\frac{2^n}{2^{n}}} r(h)^{\frac{1}{2^{n}}} 
p([h,h] )^{\frac{1}{2}+\frac{1}{4}+\cdots +\frac{1}{2^{n+1}}},
\end{align*}
where the last inequality follows from the m-topologisability of $T$, 
with some constant $D_p$. 
Taking limits as $n\to\infty$, we obtain the required 
inequality with $C=D_p$. 
\end{proof}

\begin{remark}\label{r:mtopgen}
The conclusion of Lemma~\ref{l:schwarzformposop} can be obtained for a class of positive 
operators $T$ larger than that of m-topologisable ones, namely, in
\eqref{e:mtop} it is sufficient that $r\colon \cH\ra [0,+\infty)$ is an arbitrary function.
\end{remark}

It now follows that if an
m-topologisability condition is imposed on the kernel $\fk$, a stronger inequality
than that in condition (b2) of Theorem \ref{t:vhinvkolmo2} is obtained. In
particular, this kind 
of kernels always have VH-space linearisation, equivalently, their reproducing
kernel VH-spaces always exist.

\begin{proposition}\label{p:autob2mker}
Let $\fk\colon X\times X\ra\cL^*(\cH)$ be a positive semidefinite kernel, 
where $\cH$ is a 
topological VE-space  over a topologically ordered $*$-space $Z$. 
Assume that for every $x\in X$, the operator $\fk(x,x)$ is 
m-topologisable. Then, for any $x \in X$ and any seminorm $p \in S(Z)$, 
there exists a constant $c_p(x)\geq 0$ such that for all 
$n \in \NN$, $\{ y_i \}_{i=1}^{n} \in X$, 
$\{ h_i \}_{i=1}^{n} \in \cH$ we have
\begin{equation*}
p ( \sum_{i,j=1}^{n} [ \fk(x,y_i)h_i, \fk(x, y_j)h_j ]  )
\leq c_p(x)\,  p ( \sum_{i,j=1}^{n} 
[ \fk(y_j, y_i)h_i, h_j ]  ). 
\end{equation*} 
\end{proposition}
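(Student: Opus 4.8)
The plan is to reduce the whole statement to a single positive operator on a linearisation and then run the Krein--Reid--Lax--Dieudonn\'e iteration of Lemma~\ref{l:schwarzformposop}, feeding in the m-topologisability of $\fk(x,x)$ rather than of the operator we iterate. First I would invoke Theorem~\ref{t:vhinvkolmo}, with $\Gamma$ the trivial group, to obtain a minimal VE-space linearisation $(\cK;V)$ of $\fk$, exactly as in the proof of Lemma~\ref{l:schwarzforposker}: here $\cK$ is a topological VE-space over $Z$ and $V(y)^*V(z)=\fk(y,z)$ for all $y,z\in X$. Fixing $x\in X$, set $S:=V(x)^*V(x)=\fk(x,x)\in\cL^*(\cH)$, which is positive and m-topologisable by hypothesis, and $T:=V(x)V(x)^*\in\cL^*(\cH)$ acting on $\cK$, which is positive as well.

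The computational heart is the identity, valid for every integer $m\geq 1$ and every $h\in\cK$, which follows from $T^m=V(x)S^{m-1}V(x)^*$ together with $[V(x)a,V(x)b]_{\cK}=[Sa,b]_{\cH}$:
\begin{equation*}
[T^mh,T^mh]_{\cK}=[S^mg,S^{m-1}g]_{\cH},\qquad g:=V(x)^*h\in\cH.
\end{equation*}
Applying the Schwarz inequality of Lemma~\ref{l:schwarzforposop} to the positive operator $S$ and the vector $S^{m-1}g$, and then the m-topologisability bound $p([S^kg,S^kg])^{1/2}\leq D_p^{\,k}\,r(g)$ (after, if needed, enlarging the continuous seminorm $r$ by the continuous seminorm $h\mapsto p([h,h])^{1/2}$ of Lemma~\ref{l:topology} so that the estimate also holds at the zeroth power, and taking $D_p\geq 1$), I obtain for a fixed $p\in S(Z)$
\begin{equation*}
p([T^mh,T^mh]_{\cK})\leq p([S^mg,S^mg]_{\cH})^{1/2}\,p([S^{m-1}g,S^{m-1}g]_{\cH})^{1/2}\leq D_p^{\,2m-1}\,r(g)^2,
\end{equation*}
so that $p([T^mh,T^mh]_{\cK})^{1/2}\leq D_p^{\,m}\,\tl{r}(h)$ with $\tl{r}(h):=D_p^{-1/2}\,r(V(x)^*h)$.

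The tempting next move is to declare $T$ m-topologisable and quote Lemma~\ref{l:schwarzformposop} verbatim, and this is where the only genuine obstacle sits: $\tl{r}$ is the composition of the continuous seminorm $r$ with $V(x)^*\colon\cK\ra\cH$, and the continuity of $V(x)^*$ is precisely condition (b2) itself, so assuming it would be circular. The resolution is Remark~\ref{r:mtopgen}, whose whole point is that in the hypothesis \eqref{e:mtop} the majorant $r$ may be an \emph{arbitrary} function $\cK\ra[0,+\infty)$; since $\tl{r}$ is exactly such a function, the conclusion of Lemma~\ref{l:schwarzformposop} remains valid for $T$, yielding a constant $C=C(T,p)=c_p(x)$, depending only on $x$ (through $S=\fk(x,x)$) and $p$, with $p([Th,h]_{\cK})\leq c_p(x)\,p([h,h]_{\cK})$ for all $h\in\cK$.

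Finally I would specialise $h=\sum_{i=1}^{n}V(y_i)h_i\in\cK$ and translate both sides through $V(y)^*V(z)=\fk(y,z)$. On one hand $V(x)^*h=\sum_{i=1}^{n}\fk(x,y_i)h_i$, so that
\begin{equation*}
[Th,h]_{\cK}=[V(x)^*h,V(x)^*h]_{\cH}=\sum_{i,j=1}^{n}[\fk(x,y_i)h_i,\fk(x,y_j)h_j]_{\cH};
\end{equation*}
on the other hand
\begin{equation*}
[h,h]_{\cK}=\sum_{i,j=1}^{n}[V(y_j)^*V(y_i)h_i,h_j]_{\cH}=\sum_{i,j=1}^{n}[\fk(y_j,y_i)h_i,h_j]_{\cH}.
\end{equation*}
Substituting these into $p([Th,h]_{\cK})\leq c_p(x)\,p([h,h]_{\cK})$ gives the asserted inequality, with the \emph{same} seminorm $p$ on both sides --- which is exactly the strengthening over condition (b2) of Theorem~\ref{t:vhinvkolmo2}. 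I expect the presentation of the identity $[T^mh,T^mh]_{\cK}=[S^mg,S^{m-1}g]_{\cH}$ and the careful appeal to Remark~\ref{r:mtopgen} (rather than to literal m-topologisability of $T$) to be the only delicate points; the remaining steps are bookkeeping.
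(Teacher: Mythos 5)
Your proof is correct, but it is organised genuinely differently from the paper's. The paper applies the Krein--Reid--Lax--Dieudonn\'e Lemma~\ref{l:schwarzformposop} directly to the m-topologisable operator $\fk(x,x)$ acting on $\cH$, with the test vector $\sum_i\fk(x,y_i)h_i$, then invokes the kernel Schwarz inequality of Lemma~\ref{l:schwarzforposker}, and finishes with the standard division argument; it never needs Remark~\ref{r:mtopgen}, only the literal statement of Lemma~\ref{l:schwarzformposop} with a continuous seminorm majorant. You instead transfer the power bounds from $S=\fk(x,x)$ to the operator $T=V(x)V(x)^*$ on the linearisation $\cK$ (your identity $[T^mh,T^mh]_\cK=[S^mg,S^{m-1}g]_\cH$ is correct, as is the Schwarz step and the enlargement of $r$ to cover the zeroth power), and then run the iteration on $\cK$ with the test vector $\sum_iV(y_i)h_i$. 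This forces you to use the generalized hypothesis of Remark~\ref{r:mtopgen}, since your majorant $\tl{r}=D_p^{-1/2}\,r\circ V(x)^*$ cannot be asserted continuous without circularity with condition (b2) --- you identified and resolved this correctly, and the appeal is legitimate because the iteration in the proof of Lemma~\ref{l:schwarzformposop} uses only pointwise values of the majorant, never its continuity or subadditivity. What your route buys: the final inequality drops out of the KRLD conclusion by direct specialisation, with no division trick and no need for Lemma~\ref{l:schwarzforposker}, and it makes explicit a propagation statement (the compression $V(x)V(x)^*$ inherits generalized m-topologisability from $\fk(x,x)$) in the spirit of Proposition~\ref{p:propagation}. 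What the paper's route buys: economy, since it reuses the already-established Lemma~\ref{l:schwarzforposker} and stays within the literal hypotheses of Lemma~\ref{l:schwarzformposop}. (One minor slip: $T=V(x)V(x)^*$ belongs to $\cL^*(\cK)$, not $\cL^*(\cH)$.)
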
 

\begin{proof}
Since $\fk(x,x)$ is an m-topologisable positive operator, by taking 
$T:=\fk(x,x)$ and $h:=\sum_{i=1}^n  \fk(x,y_i)h_i$ 
in Lemma \ref{l:schwarzformposop}, for some constant $c_p(x)\geq 0$ we have 
\begin{equation}\label{e:ineqdiagker}
p(\sum_{i,j=1}^n[\fk(x,x)\fk(x,y_i)h_i,\fk(x,y_j)h_j] )
\leq c_p(x)\, p ( \sum_{i,j=1}^{n} [ \fk(x,y_i)h_i, \fk(x, y_j)h_j ]).
\end{equation} 
Then, by Lemma~\ref{l:schwarzforposker}, we have
\begin{align}\nonumber
p(\sum_{i,j=1}^n[\fk(x,y_i)h_i,\fk(x,y_j)h_j] ) & \leq 
p(\sum_{i,j=1}^n[\fk(x,x)\fk(x,y_i)h_i,\fk(x,y_j)h_j] )^{\frac{1}{2}} \,
p(\sum_{i,j=1}^n[\fk(y_i,y_j)h_j,h_i] )^{\frac{1}{2}}\\
\intertext{whence, by \eqref{e:ineqdiagker},}
 & \leq c_p(x)^{1/2}\, p( \sum_{i,j=1}^{n} [ \fk(x,y_i)h_i, \fk(x, y_j)h_j ])^{1/2}\,
p(\sum_{i,j=1}^n[\fk(y_i,y_j)h_j,h_i] )^{\frac{1}{2}}. \label{e:trouble}
\end{align}
A standard argument implies now the required inequality.
\end{proof}

\begin{remark}\label{r:trouble}
The inequality in Proposition~\ref{p:autob2mker} is stronger than condition (b2) in 
Theorem~\ref{t:vhinvkolmo2} and one can ask whether the inequality obtained in 
Lemma~\ref{l:redunt}, which does not require any extra condition on the
positive operator $T$,  
may be used instead of Lemma~\ref{l:schwarzformposop}, in order to obtain the
validity of the  
inequality in the condition (b2), in general. Unfortunately, an inspection of
the proof of  
Proposition~\ref{p:autob2mker}, more precisely \eqref{e:trouble}, 
shows that this is not the case and, if condition (b2) has to be proven in general, 
this way does not work and probably a completely new idea is needed. 
On the other hand, we do not have a counter-example
of  positive semidefinite kernels for which condition (b2) does not hold: in
view of \cite{Bonet}, 
such a counter-example should be very pathological, if exists.
The question formulated at the beginning of this subsection remains open.
\end{remark}

The next proposition shows that under very general assumptions of positivity, an m-topologisable
diagonal of the kernel propagates an even stronger continuity property throughout the kernel.

\begin{proposition}\label{p:propagation}
Let $\fk\colon X\times X\ra \cL^*_c(\cH)$ be a $2$-positive kernel, for some topological
VE-space space $\cH$ over a topologically ordered $*$-space $Z$. 
If $\fk(x,x)$ is m-topologisable for all $x\in X$ then, for any 
$x,y\in X$ and any $p\in S(Z)$, there exists $C\geq 0$ such that, for all $h\in\cH$ the following 
inequality holds
\begin{equation}\label{e:pekaxy}
p([\fk(y,x)h,\fk(y,x)h])\leq C\, p([h,h]).
\end{equation}
In particular, the linear operator $\fk(y,x)$ is m-topologisable for all $x,y\in X$.
\end{proposition}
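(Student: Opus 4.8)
The plan is to reduce the problem to a genuinely positive semidefinite kernel on a two-element set, and then to chain the two m-topologisable Schwarz-type inequalities already available. Fix $x,y\in X$. Although $\fk$ is only assumed $2$-positive, I claim its restriction to the two-point set $\{x,y\}$ is positive semidefinite. Given $n\in\NN$, points $x_1,\dots,x_n\in\{x,y\}$ and vectors $h_1,\dots,h_n\in\cH$, I would group the indices according to the value of $x_i$ and put $g_x=\sum_{i\,:\,x_i=x}h_i$ and $g_y=\sum_{i\,:\,x_i=y}h_i$. Using bilinearity of the gramian together with the linearity of each operator $\fk(a,b)$, the sum $\sum_{i,j=1}^n[\fk(x_i,x_j)h_j,h_i]_\cH$ collapses to $\sum_{a,b\in\{x,y\}}[\fk(a,b)g_b,g_a]_\cH$, which is exactly the $2$-positivity expression for the pair $(x,y)$ and hence is $\geq0$. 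Thus $\fk$ restricted to $\{x,y\}\times\{x,y\}$ is positive semidefinite, and in particular Hermitian by Lemma~\ref{L:twopos}, so the earlier results apply to it verbatim.

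Since the restricted kernel is positive semidefinite and both diagonal values $\fk(x,x),\fk(y,y)$ are m-topologisable by hypothesis, I would apply Proposition~\ref{p:autob2mker} to this restricted kernel, taking the distinguished point to be $y$ and a single off-point $x$ with vector $h$ (that is, $n=1$, $y_1=x$, $h_1=h$). This gives, for each $p\in S(Z)$, a constant $c_p(y)\geq0$ with $p([\fk(y,x)h,\fk(y,x)h]_\cH)\leq c_p(y)\,p([\fk(x,x)h,h]_\cH)$ for all $h\in\cH$. The operator $\fk(x,x)$ is positive (by $2$-positivity) and m-topologisable, so Lemma~\ref{l:schwarzformposop} yields a further constant $C'$, depending only on $\fk(x,x)$ and $p$, with $p([\fk(x,x)h,h]_\cH)\leq C'\,p([h,h]_\cH)$. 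Composing the two estimates produces \eqref{e:pekaxy} with $C=c_p(y)\,C'$. The decisive point is that the \emph{same} seminorm $p$ occurs on both sides throughout; this is exactly what the m-topologisable refinements in Lemma~\ref{l:schwarzformposop} and Proposition~\ref{p:autob2mker} buy us over the generic constant-$4$ Schwarz inequality of Corollary~\ref{c:4schwarz}, in which the seminorm on the right is only some larger $q\in S(Z)$.

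For the final assertion, in the notation of Lemma~\ref{l:topology} I would write $C=C_p$ (depending on $p$ and on the fixed $x,y$, but not on $h$) and rephrase \eqref{e:pekaxy} as $\tl p(\fk(y,x)h)\leq C_p^{1/2}\,\tl p(h)$, valid for all $h\in\cH$. Iterating this single-seminorm contraction $n$ times gives $\tl p\bigl((\fk(y,x))^n h\bigr)\leq C_p^{n/2}\,\tl p(h)$ for every $n\in\NN$ and every $h\in\cH$, which is precisely the defining inequality \eqref{e:mtop} for m-topologisability, with $D_p=C_p^{1/2}$ and the continuous seminorm $r=\tl p$. Hence $\fk(y,x)$ is m-topologisable for all $x,y\in X$.

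The step I expect to be the main obstacle is the very first one. Proposition~\ref{p:autob2mker} and its supporting lemmas are built on a Kolmogorov linearisation and therefore require a genuinely positive semidefinite kernel, so they cannot be invoked for a merely $2$-positive $\fk$ on all of $X$. The two-point restriction is the device that repairs this, and verifying that $2$-positivity upgrades to full positive semidefiniteness on $\{x,y\}$ — via the index-grouping collapse above — is the technical heart of the argument; everything afterwards is a routine composition and iteration of inequalities already proven.
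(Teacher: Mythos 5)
Your proof is correct, but it takes a genuinely different route from the paper's. The paper argues directly from the $2$-positivity inequality $[\fk(x,y)g,h]+[\fk(y,x)h,g]\leq[\fk(x,x)h,h]+[\fk(y,y)g,g]$: it substitutes $g=C_y^{-1}\fk(y,x)h$, where $C_y$ is the constant of Lemma~\ref{l:schwarzformposop} for $T=\fk(y,y)$, uses $\fk(x,y)=\fk(y,x)^*$ from Lemma~\ref{L:twopos}, applies the increasing seminorm $p$, and then invokes Lemma~\ref{l:schwarzformposop} twice (once for $\fk(y,y)$ to absorb the term $C_y^{-2}p([\fk(y,y)\fk(y,x)h,\fk(y,x)h])$, once for $\fk(x,x)$), arriving at $C=C_xC_y$ — no linearisation and no appeal to Proposition~\ref{p:autob2mker}. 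Your route instead rests on the observation, not made in the paper, that a $2$-positive kernel restricted to a two-point set $\{x,y\}$ is automatically positive semidefinite (the index-grouping collapse, which is valid: it only uses additivity of the gramian and linearity of the operators), and this licenses applying Proposition~\ref{p:autob2mker} to the restriction, followed by Lemma~\ref{l:schwarzformposop} for $\fk(x,x)$. Unwound, the constants essentially coincide, since the $c_p(y)$ produced inside Proposition~\ref{p:autob2mker} is precisely the Lemma~\ref{l:schwarzformposop} constant of $\fk(y,y)$; so both proofs give the same bound, with the same seminorm $p$ on both sides, and the concluding iteration to m-topologisability is identical. What your approach buys is modularity (Proposition~\ref{p:autob2mker} is reused verbatim) and a sharpening of the paper's own remark following this proposition, which suggests that the Proposition~\ref{p:autob2mker} route requires upgrading $2$-positivity to full positive semidefiniteness: your two-point restriction shows that upgrade is free. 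What the paper's approach buys is economy: it is self-contained and bypasses the Kolmogorov linearisation hidden in Lemma~\ref{l:schwarzforposker}, on which Proposition~\ref{p:autob2mker} depends.
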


\begin{proof} Let us fix $x,y\in X$ and $p\in S(Z)$, and let $h,g\in\cH$ vary. 
By the $2$-positivity assumption, we have
\begin{equation*} 
[\fk(x,y)g,h]+[\fk(y,x)h,g]\leq [\fk(x,x)h,h]+[\fk(y,y)g,g],
\end{equation*} 
and take $g=C_y^{-1}\fk[y,x]h$, where $C_y>0$ is a constant as 
in Lemma~\ref{l:schwarzformposop} when applied to the m-topologisable and positive operator
$T=\fk(y,y)$. 
Then, taking into account that, by Lemma~\ref{L:twopos}, $\fk(x,y)=\fk(y,x)^*$, it follows
\begin{equation*}
2C_y^{-1}[\fk(y,x)h,\fk(y,x)h]\leq [\fk(x,x)h,h]+C_y^{-2}[\fk(y,y)\fk(y,x)h,\fk(y,x)h],
\end{equation*} and, since the left side is in $Z^+$ and $p$ is increasing, we obtain
\begin{align*}
2C_y^{-1}p([\fk(y,x)h,\fk(y,x)h]) & \leq p([\fk(x,x)h,h])+C_y^{-2}p([\fk(y,y)\fk(y,x)h,\fk(y,x)h]) \\
& \leq  p([\fk(x,x)h,h])+C_y^{-1}p([\fk(y,x)h,\fk(y,x)h])\\
& \leq C_x p([h,h])+C_y^{-1}p([\fk(y,x)h,\fk(y,x)h]),
\end{align*}
which provides the required inequality \eqref{e:pekaxy}, with $C=C_x C_y$.

Finally, given arbitrary $x,y\in X$, for any $p\in S(Z)$ and any natural number $n\geq 1$, 
by iterating the inequality \eqref{e:pekaxy} $n$ times we get
\begin{equation*} p([\fk(y,x)^nh,\fk(y,x)^nh])\leq C^n p([h,h]),\quad h\in \cH,
\end{equation*} hence $\fk(y,x)$ is m-topologisable.
\end{proof}

\begin{remark} The conclusion of Proposition~\ref{p:propagation} can be obtained as a 
consequence of Proposition~\ref{p:autob2mker} if the assumption of 
$2$-positivity of the kernel is replaced by its positive 
semidefiniteness.
\end{remark}

\subsection{Kernels with Values Adjointable Operators on VH-Modules.}
\label{ss:kvaovhm}
In the following we point out an application of Theorem~\ref{t:vhinvkolmo2} to
linear maps with values adjointable operators on VH-modules over admissible
$*$-algebras. By definition, an \emph{admissible $*$-algebra} 
$\cA$ is a $*$-algebra that is, in the same time, an admissible space. A
\emph{VH-module $\cE$ over an admissible $*$-algebra $\cA$} is, by definition,
a VH-space over $\cA$, viewed as an admissible space, which is a right
$\cA$-module, as well. Given a VH-module $\cE$ over an admissible $*$-algebra $\cA$, 
an \emph{$\cH$-reproducing kernel VH-module over $\cA$} is just an
$\cE$-reproducing kernel VH-space over $\cA$, with definition as in 
Subsection~\ref{ss:vhslrk}, which is also a VH-module over $\cA$. We have
the following consequence of Theorem~\ref{t:vhinvkolmo2} and 
Proposition~\ref{p:veinvkolmomodule}.
 
 \begin{proposition}\label{p:vhinvkolmomodule} 
 Let $\Gamma$ be a $*$-semigroup that acts 
on the nonempty set $X$ and let $\fk\colon X\times X\ra\cL_c^*(\cH)$ 
be a kernel, for some VH-module $\cH$ over an admissible $*$-algebra $\cA$. 
Then, assertion \emph{(1)} in Theorem~\ref{t:vhinvkolmo2} 
is equivalent with each of the following assertions:
\begin{itemize}
\item[(2)] $\fk$ has a $\Gamma$-invariant VH-module (over $\cA$) 
linearisation $(\cK;\pi;V)$.
\item[(3)] $\fk$ admits an $\cH$-reproducing kernel VH-module $\cR$ and 
there exists a $*$-representation $\rho\colon \Gamma\ra\cL_c^*(\cR)$ such that 
$\rho(\xi)\fk_xh=\fk_{\xi\cdot x}h$ for all $\xi\in\Gamma$, $x\in X$, $h\in\cH$.
\end{itemize}

In addition, in case any of the assertions \emph{(1)}, \emph{(2)}, or
\emph{(3)} holds,  
then a minimal $\Gamma$-invariant VH-module 
linearisation can be constructed,
any minimal $\Gamma$-invariant VH-module 
linearisation is unique up to unitary equivalence, 
a pair $(\cR;\rho)$ as in assertion \emph{(3)} with $\cR$ 
minimal can be always obtained and, in this case, it is uniquely 
determined by $\fk$ as well.
\end{proposition}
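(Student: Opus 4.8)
The plan is to deduce this proposition from Theorem~\ref{t:vhinvkolmo2} together with the module-theoretic construction recalled in Proposition~\ref{p:veinvkolmomodule}, so that essentially no new analytic work is required beyond keeping track of the $\cA$-action. Indeed, Theorem~\ref{t:vhinvkolmo2} already furnishes the equivalence of assertion~(1) with the plain VH-space versions of (2) and (3), namely with the existence of a $\Gamma$-invariant VH-space linearisation $(\cK;\pi;V)$ and, respectively, of an $\cH$-reproducing kernel VH-space $\cR$ carrying a continuous $*$-representation $\rho$ with $\rho(\xi)\fk_x=\fk_{\xi\cdot x}$. Thus the only additional content to establish is that, in the present module setting, $\cK$ may be taken to be a VH-module over $\cA$ with $V$ and $\pi$ respecting the $\cA$-action, and that the associated $\cR$ is then a VH-module over $\cA$ as well. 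Minimality and uniqueness up to unitary equivalence will be inherited from the corresponding statements in Theorem~\ref{t:vhinvkolmo2} and Proposition~\ref{p:veinvkolmomodule}.

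For (1)$\Ra$(2), I would use the concrete linearisation built in the proof of Theorem~\ref{t:vhinvkolmo2}: $\cK$ is the VH-space completion of $\cG=\cG(X;\cH)=\ran K$, equipped with the gramian \eqref{e:ipfezero}, while $V$ and $\pi$ are defined by \eqref{e:defvex} and \eqref{e:dop}. By Proposition~\ref{p:veinvkolmomodule}, $\cG$ is already a VE-module over $\cA$ under the pointwise action $(fa)(x)=f(x)a$, and its gramian satisfies $[e,fa]_\cG=[e,f]_\cG\,a$, see \eqref{e:gmp}. The decisive step is to check that this $\cA$-action extends from the dense subspace $\cG$ to the completion $\cK$. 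For fixed $a\in\cA$, right multiplication $R_a\colon\cG\ra\cG$, $f\mapsto fa$, satisfies
\begin{equation*}
[fa,fa]_\cG=a^*[f,f]_\cG\,a,
\end{equation*}
so that, for every $p\in S(Z)$, we have $\tl p(fa)^2=p\bigl(a^*[f,f]_\cG\,a\bigr)$; bounding this by $c_a\,\tl q(f)^2$ for a suitable $q\in S(Z)$ shows that $R_a$ is continuous and hence extends uniquely to $\cK$, making $\cK$ a VH-module over $\cA$. Since the operators $V(x)\in\cL_c^*(\cH,\cK)$ are adjointable, they are automatically module maps, and $\pi(\xi)\in\cL_c^*(\cK)$ likewise; therefore $(\cK;\pi;V)$ is a $\Gamma$-invariant VH-module linearisation of $\fk$, minimal because $\cG=\lin V(X)\cH$ is dense in $\cK$.

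The implication (2)$\Ra$(1) is immediate, since a $\Gamma$-invariant VH-module linearisation is in particular a $\Gamma$-invariant VH-space linearisation, so Theorem~\ref{t:vhinvkolmo2} applies verbatim. For the equivalence with (3), I would transport the structure through the unitary $U\colon\cK\ra\cR$ of \eqref{e:defu}, exactly as in Proposition~\ref{p:vhlvsrk}: the reproducing kernel VH-space $\cR=\{V(\cdot)^*f\mid f\in\cK\}$ inherits a pointwise $\cA$-action from $\cF(X;\cH)$, and because each $V(x)^*$ is a module map one has $U(fa)=V(\cdot)^*(fa)=(V(\cdot)^*f)a=(Uf)a$, so $U$ is a module isomorphism and $\rho(\xi)=U\pi(\xi)U^{-1}$ is the required continuous $*$-representation. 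Conversely, given $\cR$ and $\rho$ as in (3), setting $V(x)=\fk_x$ and $\pi=\rho$ recovers a $\Gamma$-invariant VH-module linearisation by Proposition~\ref{p:vhlvsrk}.

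The main obstacle I anticipate is precisely the continuity of the right-multiplication operators $R_a$, i.e.\ controlling $p(a^*z a)$ for $z\in Z^+$ by an increasing seminorm on $Z$; this is what guarantees both that the module action does not degenerate upon completion and that $U$ is a module map. In the applications --- in particular when $\cA$ is a locally $C^*$-algebra, the setting of the next subsection --- this is automatic, since the topology of $\cA$ is then generated by increasing submultiplicative ($C^*$-)seminorms and $p(a^*za)\leq p(a)^2\,p(z)$. Granting this, the minimality and uniqueness-up-to-unitary-equivalence assertions follow by combining the uniqueness part of Theorem~\ref{t:vhinvkolmo2} with the module refinement of Proposition~\ref{p:veinvkolmomodule}, exactly as in the nontopological case.
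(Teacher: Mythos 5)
Your route is exactly the paper's own: the paper gives no separate argument for Proposition~\ref{p:vhinvkolmomodule}, it simply declares it a consequence of Theorem~\ref{t:vhinvkolmo2} and Proposition~\ref{p:veinvkolmomodule}, and your write-up is that deduction carried out in detail (the identification $[fa,fa]_\cG=a^*[f,f]_\cG\,a$ via \eqref{e:gmp}, the fact that adjointable operators between VE-modules are automatically module maps, the transport of the module structure through $U$ of \eqref{e:defu}, and the inheritance of minimality and uniqueness are all correct). However, the step you yourself flag and then only ``grant'' is a genuine gap, not a routine verification: the paper's definition of an admissible $*$-algebra ($*$-algebra which is simultaneously an admissible space) imposes \emph{no} compatibility between the multiplication of $\cA$ and its topology or order, so there is nothing from which to derive an estimate of the form $p(a^*[f,f]_\cG\,a)\leq c_a\, q([f,f]_\cG)$ with $q\in S(\cA)$. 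Without it, $R_a$ need not be continuous, a Cauchy net $(f_i)_i$ in $\cG$ need not have $(f_ia)_i$ Cauchy, and hence the $\cA$-action on the dense VE-module $\cG$ need not extend to the VH-space completion $\cK$; for the same reason one cannot show that the reproducing kernel space $\cR$ is stable under the pointwise action $(fa)(x)=f(x)a$. So assertions (2) and (3) are not established at the stated level of generality, and no appeal to Theorem~\ref{t:vhinvkolmo2} (a pure VH-space statement) or to Proposition~\ref{p:veinvkolmomodule} (a purely algebraic, nontopological statement) can supply the missing continuity.

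To be clear, this is a lacuna you share with the paper rather than one you introduced: the paper's one-line ``proof'' glosses over precisely this point. Your remark that the gap closes when $\cA$ is a locally $C^*$-algebra is correct, since then $p(a^*za)\leq p(a)^2\,p(z)$ for every $p\in S_*(\cA)$ by submultiplicativity and $*$-invariance of $C^*$-seminorms, and that is the only setting in which the paper actually uses the proposition afterwards (Theorem~\ref{t:vhinvkolmo3}). A repair at the general level would require either adding an explicit hypothesis --- e.g.\ that for every $a\in\cA$ and $p\in S(\cA)$ there exist $q\in S(\cA)$ and $c\geq 0$ with $p(a^*za)\leq c\,q(z)$ for all $z\in\cA^+$, or separate continuity of multiplication in $\cA$ --- or building such a requirement into the definition of a VH-module over an admissible $*$-algebra; as it stands, your proof (and the paper's claim) is complete only modulo that assumption.
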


If $\phi\colon \cB\ra\cL_c^*(\cH)$ is a linear map, for some $*$-algebra $\cB$ 
and some VH-module $\cH$ over an admissible $*$-algebra $\cA$, one can define
a kernel $\fk\colon \cB\times\cB\ra \cL^*_c(\cH)$ by
\begin{equation}\label{e:kerphi}
\fk(a,b)=\phi(a^*b),\quad a,b\in\cB. 
\end{equation}
It is immediate to verify that, letting the $*$-semigroup $\cB$ act on itself
by multiplication, $\fk$ is $\cB$-invariant, in the sense of
\eqref{e:invariant}. Consequently, the following holds.

\begin{corollary}
\label{c:stinespringmodulevh}
Let $\phi\colon \cB\ra\cL^*_c(\cH)$ be a linear map, for some $*$-algebra $\cB$ 
and some VH-module $\cH$ over an admissible $*$-algebra $\cA$. 
The following assertions are equivalent:
\begin{itemize}
\item[(1)] The map $\phi$ is positive semidefinite, in the sense that the
  kernel $\fk$ defined at \eqref{e:kerphi} is positive semidefinite, and
\begin{itemize}
\item[(b1)] For any $b \in \cB$ and any seminorm $p \in S(\cA)$, 
there exist a seminorm $q \in S(\cA)$ and 
a constant $c_p(b)\geq 0$ such that, for all $n \in \NN$,
$\{h_i \}_{i=1}^{n} \in \cH$, $\{ a_i \}_{i=1}^{n} \in \cB$, we have
\begin{equation*}
p ( \sum_{i,j=1}^{n} [ \phi(a_i^*b^* b a_j)h_j, h_i ]_{\cH} )
\leq c_p(b)\,  q( \sum_{i,j=1}^{n} 
[ \phi(a_i^* a_j)h_j, h_i ]_{\cH} ). 
\end{equation*} 
\item[(b2)] For any $b \in\cB$ and any seminorm $p \in S(\cA)$, 
there exist a seminorm 
$q \in S(\cA)$ and 
a constant $c_p(b)\geq 0$ such that, for all 
$n \in \NN$, $\{ a_i \}_{i=1}^{n} \in \cB$, 
$\{ h_i \}_{i=1}^{n} \in \cH$, we have
\begin{equation*}
p ( \sum_{i,j=1}^{n} [ \phi(b^*a_i)h_i, \phi(b^*a_j)h_j ]_{\cH} )
\leq c_p(b)\,  q ( \sum_{i,j=1}^{n} 
[ \phi(a_j^*a_i)h_i, h_j ]_{\cH} ). 
\end{equation*}
\end{itemize}
\item[(2)] There exist a VH-module $\cK$ over the admissible $*$-algebra $\cA$, 
a linear map $V\colon\cB\ra\cL_c^*(\cH,\cK)$, and a $*$-representation 
$\pi\colon \cB\ra\cL_c^*(\cK)$, such that:
\begin{itemize}
\item[(i)] $\phi(a^*b)=V(a)^*V(b)$ for all $a,b\in\cB$.
\item[(ii)] $V(ab)=\pi(a)V(b)$ for all $a,b\in\cB$.
\end{itemize}
\end{itemize}
In addition, if this happens, then the triple $(\cK;\pi;V)$ can always 
be chosen minimal, in the sense that $\cK$ is the closed linear span of the set 
$V(\cB)\cH$, and any two minimal triples as before are unique, modulo 
unitary equivalence.
\begin{itemize}
\item[(3)] There exist an $\cH$-reproducing kernel VH-module $\cR$ on 
$\cA$ and a $*$-representation $\rho\colon\cB\ra\cL_c^*(\cR)$ such that:
\begin{itemize}
\item[(i)] $\cR$ has the reproducing kernel $\cB\times\cB
\ni(a,b)\mapsto \phi(a^*b)\in\cL^*(\cH)$.
\item[(ii)] $\rho(a)\phi(\cdot b)h=\phi(\cdot ab)h$ for all 
$a,b\in\cB$ and $h\in\cH$.
\end{itemize}
\end{itemize}
In addition, the reproducing kernel VH-module $\cR$ as in (3) can always be 
constructed minimal and in this case it is uniquely determined by 
$\phi$.
\end{corollary}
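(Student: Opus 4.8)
The plan is to obtain Corollary~\ref{c:stinespringmodulevh} as the specialisation of Proposition~\ref{p:vhinvkolmomodule} to the set $X=\cB$, with $\Gamma=\cB$ regarded as a $*$-semigroup under multiplication and acting on itself by left multiplication $\xi\cdot a=\xi a$, and with the kernel $\fk(a,b)=\phi(a^*b)$ as in \eqref{e:kerphi}. First I would check that the hypotheses of the proposition hold: since $\phi$ takes values in $\cL_c^*(\cH)$, so does $\fk$; the kernel $\fk$ is $\cB$-invariant in the sense of \eqref{e:invariant}, as observed just before the corollary; and the positive semidefiniteness of $\fk$ is, by definition, that of $\phi$. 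Note also that here the underlying admissible space $Z$ is $\cA$ itself, so $S(Z)=S(\cA)$. Thus Proposition~\ref{p:vhinvkolmomodule} applies, and the remaining task is to match each of its clauses with the corresponding assertion of the corollary.

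Next I would translate the two boundedness conditions. In condition (b1) of Theorem~\ref{t:vhinvkolmo2}, setting $\xi=b$, $x_i=a_i$ and using
\begin{equation*}\fk(b\cdot a_i,b\cdot a_j)=\phi\bigl((ba_i)^*(ba_j)\bigr)=\phi(a_i^*b^*ba_j),\end{equation*}
turns it into condition (b1) of the corollary; likewise, in condition (b2), setting $x=b$, $y_i=a_i$ and using $\fk(b,a_i)=\phi(b^*a_i)$ and $\fk(a_j,a_i)=\phi(a_j^*a_i)$ yields condition (b2). Together with the equivalence (1)$\LRa$(2)$\LRa$(3) of Proposition~\ref{p:vhinvkolmomodule} this already gives the equivalence of the three assertions at the level of the $*$-semigroup data: the invariant VH-module linearisation of assertion (2) supplies a map $V\colon\cB\ra\cL_c^*(\cH,\cK)$ with $\phi(a^*b)=\fk(a,b)=V(a)^*V(b)$ and, by (ihl3), $V(ab)=V(a\cdot b)=\pi(a)V(b)$, which are exactly (i) and (ii), while assertion (3) transcribes $\rho(\xi)\fk_xh=\fk_{\xi\cdot x}h$ into $\rho(a)\phi(\cdot\,b)h=\phi(\cdot\,ab)h$.

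The one point that goes beyond a purely mechanical reading of Proposition~\ref{p:vhinvkolmomodule} is that the corollary asserts $V$ to be a \emph{linear} map and $\pi$, $\rho$ to be linear $*$-representations of the algebra $\cB$, whereas the proposition only delivers a $*$-semigroup representation of $\cB$ and a map $V$ defined on the set $\cB$. Here I would invoke the explicit minimal construction recalled after Theorem~\ref{t:vhinvkolmo}. On one hand, $V(a)h=\fk_ah$ is the function $y\mapsto\phi(y^*a)h$, which is linear in $a$ precisely because $\phi$ is linear. On the other hand, the representation acts on the range $\cG$ of the convolution operator by $(\pi(a)f)(y)=f(a^*y)$, and for $f=Kg$ with $g\in\cF_0$ one computes $(\pi(a)f)(y)=\sum_{x}\phi(y^*ax)g(x)$, which is linear in $a$ again by the linearity of $\phi$; since $\pi(a)$ extends continuously from $\cG$ to $\cK$, this linearity persists, and the same computation, carried through the unitary $U$ of \eqref{e:defu}, gives $\rho$ linear on $\cR$. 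Multiplicativity and $*$-compatibility of $\pi$ and $\rho$ come directly from Proposition~\ref{p:vhinvkolmomodule}.

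Finally, the minimality and uniqueness statements are inherited without extra work: the minimal triple $(\cK;\pi;V)$ furnished by Proposition~\ref{p:vhinvkolmomodule} has $\cK$ equal to the closed linear span of $V(\cB)\cH=V(X)\cH$, uniqueness modulo unitary equivalence is exactly that of the proposition, and the minimal reproducing kernel VH-module $\cR$ of assertion (3), with its representation $\rho$, is the one produced there. I expect the main obstacle to be purely one of bookkeeping---keeping the involution on the correct side when passing between $\fk(a,b)=\phi(a^*b)$ and the multiplication action, and confirming the linearity of $V$ and $\pi$ from that of $\phi$---rather than any genuine analytic difficulty, since all the topological work, namely the continuity and continuous adjointability of the operators $V(x)$ and $\pi(\xi)$, is already subsumed in Theorem~\ref{t:vhinvkolmo2} and Proposition~\ref{p:vhinvkolmomodule}.
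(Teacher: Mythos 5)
Your proposal is correct and follows essentially the same route as the paper, which obtains the corollary by specialising Proposition~\ref{p:vhinvkolmomodule} (hence Theorem~\ref{t:vhinvkolmo2}) to $\Gamma=X=\cB$ acting on itself by multiplication, with the kernel $\fk(a,b)=\phi(a^*b)$ of \eqref{e:kerphi}. The paper leaves this specialisation implicit, and your additional verifications---the translation of (b1) and (b2), and the linearity of $V$, $\pi$, $\rho$ inherited from the linearity of $\phi$ through the explicit construction---are exactly the bookkeeping the paper omits.
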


In case the $*$-algebra $\cB$ is unital, Corollary~\ref{c:stinespringmodulevh} 
takes a form that is closer to a topological version of 
Kasparov's Theorem \cite{Kasparov} and its generalisation \cite{Joita}.

\begin{corollary}\label{c:stinespringmodule2vh} 
Let $\cB$ be a unital $*$-algebra and 
$\phi\colon\cA\ra\cL^*_c(\cH)$ a linear map, for 
some VH-module $\cH$ over an ordered $*$-algebra $\cA$. 
Then, assertion \emph{(1)} 
in Corollary~\ref{c:stinespringmodulevh} is equivalent with
\begin{itemize}
\item[(2)$^\prime$] 
There exist a VH-module $\cK$ over $\cA$, a $*$-representation
 $\pi\colon\cB\ra\cL_c^*(\cK)$, and $W\in\cL_c^*(\cH,\cK)$ such that 
 \begin{equation}\phi(b)=W^*\pi(b)W,\quad b\in\cB.\end{equation}
\end{itemize}
In addition, if this happens, then the triple $(\cK;\pi;W)$ can always be 
chosen minimal, in the sense that $\cK$ is the closed linear span of the set 
$\pi(\cA)W\cH$, and any two minimal triples as before are unique, 
modulo unitary equivalence.
\end{corollary}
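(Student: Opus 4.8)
The plan is to deduce the corollary directly from Corollary~\ref{c:stinespringmodulevh}, using the unit $1_\cB$ of $\cB$ to pass from the ``two-legged'' kernel form $\phi(a^*b)=V(a)^*V(b)$ to the ``compression'' form $\phi(b)=W^*\pi(b)W$. Apart from the equivalence (1)$\LRa$(2) already established in Corollary~\ref{c:stinespringmodulevh}, the only new ingredient is the elementary remark that unitality of $\cB$ lets one absorb both legs into a single operator $W$ together with the representation $\pi$.

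For (1)$\Ra$(2)$^\prime$, first I would invoke Corollary~\ref{c:stinespringmodulevh} to produce a (minimal) triple $(\cK;\pi;V)$ with $V\colon\cB\ra\cL_c^*(\cH,\cK)$, a $*$-representation $\pi\colon\cB\ra\cL_c^*(\cK)$, and $\phi(a^*b)=V(a)^*V(b)$, $V(ab)=\pi(a)V(b)$ for all $a,b\in\cB$. Setting $W:=V(1_\cB)\in\cL_c^*(\cH,\cK)$ and applying the intertwining relation in the form $V(b)=V(b\cdot 1_\cB)=\pi(b)V(1_\cB)=\pi(b)W$, together with $1_\cB^*=1_\cB$, I obtain
\begin{equation*}
\phi(b)=\phi(1_\cB^*\,b)=V(1_\cB)^*V(b)=W^*\pi(b)W,\quad b\in\cB,
\end{equation*}
which is exactly (2)$^\prime$, the continuity and continuous adjointability of $W$ being inherited from those of $V(1_\cB)$.

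For the converse (2)$^\prime\Ra$(1), given $\cK$, $\pi$ and $W$ as in (2)$^\prime$ I would define $V(b):=\pi(b)W$; since $\cL_c^*$ is stable under composition, $V(b)\in\cL_c^*(\cH,\cK)$ and $V$ is linear. Because $\pi$ is a $*$-representation, $V(a)^*V(b)=W^*\pi(a)^*\pi(b)W=W^*\pi(a^*b)W=\phi(a^*b)$ and $V(ab)=\pi(ab)W=\pi(a)\pi(b)W=\pi(a)V(b)$, so conditions (i) and (ii) of assertion (2) in Corollary~\ref{c:stinespringmodulevh} hold; the equivalence (1)$\LRa$(2) there then returns assertion (1).

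It remains to transport minimality and uniqueness across the correspondence $V(b)=\pi(b)W$. On a minimal triple, multiplicativity gives $\pi(1_\cB)\pi(b)W=\pi(b)W$ on the dense set $\pi(\cB)W\cH$, whence $\pi(1_\cB)=I_\cK$ by continuity; consequently $W=V(1_\cB)$ and $\pi(\cB)W\cH=V(\cB)\cH$, so the two minimality conditions (closed linear span of $\pi(\cB)W\cH$, resp.\ of $V(\cB)\cH$) coincide. The uniqueness of the minimal triple then descends from the uniqueness in Corollary~\ref{c:stinespringmodulevh}: given two minimal triples in form (2)$^\prime$, forming $V_i(b)=\pi_i(b)W_i$, a unitary $U$ with $UV_1(b)=V_2(b)$ and $U\pi_1(b)U^{-1}=\pi_2(b)$ satisfies $UW_1=UV_1(1_\cB)=V_2(1_\cB)=W_2$. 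I do not anticipate a genuine obstacle in this argument; the only points needing care are verifying $\pi(1_\cB)=I_\cK$ in the minimal case, so that $W=V(1_\cB)$ recaptures the whole space, and checking that each composition $\pi(b)W$ stays within $\cL_c^*(\cH,\cK)$.
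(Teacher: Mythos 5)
Your proof is correct and follows exactly the argument the paper intends: the corollary is stated there without proof as an immediate consequence of Corollary~\ref{c:stinespringmodulevh}, obtained by setting $W=V(1_\cB)$ (so that $V(b)=\pi(b)V(1_\cB)=\pi(b)W$ and $\phi(b)=V(1_\cB)^*V(b)=W^*\pi(b)W$) in one direction and defining $V(b):=\pi(b)W$ in the other, with minimality and uniqueness transported along this correspondence just as you do. The only point worth keeping explicit is that the $*$-representation $\pi$ in (2)$^\prime$ must be understood as linear (an algebra $*$-morphism, not merely a multiplicative $*$-morphism of the underlying $*$-semigroup), since your converse direction needs $V(b)=\pi(b)W$ to be a \emph{linear} map in order to invoke assertion (2) of Corollary~\ref{c:stinespringmodulevh}.
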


\section{$*$-Representations on Hilbert Modules over Locally 
$C^*$-Algebras}\label{s:rephilbmodloccalg}

In the following we specialise to the case when $\cH$ is a Hilbert module
over a locally $C^*$-algebra. After a review of preliminary material on
locally $C^*$-algebras and Hilbert modules over locally $C^*$-algebras, we
show, by an application of Proposition \ref{p:autob2mker}, 
that the boundedness condition (b2) in Theorem~\ref{t:vhinvkolmo2} is
automatic in this case. Then, as an application, we show how the Kasparov type
dilation theorem in \cite{Joita} can be proven from here in a rather direct way.

\subsection{Hilbert Modules over Locally $C^*$-Algebras}
\label{ss:hmolcsa}
A $*$-algebra $\cA$ that has a complete 
Hausdorff topology induced by a family of
\emph{$C^*$-seminorms}, that is, seminorms $p$ on $\cA$ that satisfy the
\emph{$C^*$-condition} $p(a^*a) = p(a)^2$ for all $a\in\cA$, is called a 
\emph{locally $C^*$-algebra} \cite{Inoue} 
(equivalent names are \emph{(Locally Multiplicatively Convex) $LMC^*$-algebras}
\cite{Schmudgen}, \cite{Mallios}, or \emph{$b^*$-algebra}
\cite{Allan}, \cite{Apostol}, 
or \emph{pro $C^*$-algebra} \cite{Voiculescu}), \cite{Phillips}. Note that, any
$C^*$-seminorm is \emph{submultiplicative},  
$p(ab)\leq p(a)p(b)$ for all $a,b\in\cA$, cf.\ \cite{Sebestyen},
and \emph{$*$-invariant}, $p(a^*)=p(a)$ for all $a\in\cA$. 
Denote the collection of all continuous $C^*$-seminorms 
by $S_*(\cA)$. Then $S_*(\cA)$ is a 
directed set under pointwise maximum seminorm, namely, 
given $p,q\in S_*(\cA)$, letting 
$r(a):=\mbox{max}\{p(a),q(a)\}$ for all $a\in\cA$, then 
$r$ is a continuous $C^*$-seminorm and $p,q\leq r$. 
Locally $C^*$-algebras were studied in \cite{Allan}, \cite{Apostol},
\cite{Inoue}, \cite{Schmudgen}, \cite{Phillips}, and \cite{Zhuraev}, to cite a
few.  

It follows from Corollary 2.8 in \cite{Inoue} that any locally $C^*$-algebra 
is, in particular, an admissible space, more precisely, 
a directed family of increasing 
seminorms generating the topology in axiom (a5$^\prime$) 
in Subsection \ref{ss:as} is $S_*(\cA)$.
Note that $S_*(\cA) \subset S(\cA)$ and, although 
they generate the same topology on $\cA$, 
these two sets are quite different. For instance, 
while $S(\cA)$ is a cone, $S_*(\cA)$ is not even stable 
under positive scalar multiplication. 

By $b(\cA)$ we denote the $C^*$-algebra of 
all bounded elements in $\cA$, i.e. all 
$a\in \cA$ such that 
$\Vert a \Vert_{\infty}:=\mbox{sup}\{p(a) \mid p\in S_*(\cA)\} < \infty$. 
Then $\Vert a \Vert_{\infty}$ defines a 
$C^*$-norm on $b(\cA)$. Also, 
$b(\cA)$ is dense in $\cA$, see \cite{Phillips} or \cite{Frago}.

An \emph{approximate unit} of $\cA$ is an increasing net $(e_j)_{j\in\cJ}$ 
of positive elements in $\cA$ with $p(e_j)\leq 1$ for any 
$p\in S_*(\cA)$ and any $j\in\cJ$, satisfying 
$p(x-xe_j)\xrightarrow[j]{} 0$ and $p(x-e_jx)\xrightarrow[j]{} 0$ for 
all $p\in S_*(\cA)$ and all $x\in \cA$. For any locally $C^*$-algebra,
there exists an approximate unit, cf.\ \cite{Inoue}, \cite{Phillips}. 
 
A \emph{pre-Hilbert module over a locally $C^*$-algebra $\cA$}, 
or a \emph{pre-Hilbert $\cA$-module} is a topological VE-module $\cH$ 
over $\cA$. Note that the topology on the pre-Hilbert $\cA$-module $\cH$ is 
given by the family of seminorms $\{\tilde p\}_{p\in S_*(\cA)}$, where
$\tilde p(h)=p([h,h])^{1/2}$ for all $p\in S_*(\cA)$ and all $h\in\cH$. 
A pre-Hilbert $\cA$-module $\cH$ is called a 
\emph{Hilbert $\cA$-module} if it is complete, e.g.\ see \cite{Phillips}.

Let $\cH$ be a pre-Hilbert $\cA$-module, let $p \in S_*(\cA)$ 
and let $x,y \in \cH$. Then a Schwarz type inequality holds, 
e.g.\ see \cite{Zhuraev}, as follows
\begin{equation}\label{e:schwarz}
p([ h,k]_{\cH})
\leq p([h,h]_{\cH})^{1/2}\, p([k,k]_{\cH})^{1/2},\quad h,k\in\cH.
\end{equation}

For a Hilbert $\cA$-module $\cH$ and $p \in S_*(\cA)$, 
denote $\cI_p^\cA:=\{a\in \cA\mid p(a)=0\}$, or simply $\cI_p$ when there will
be no danger of confusion on the ambient locally $C^*$-algebra, and 
$\tl{\cI}_p^\cH:=\{x\in \cH\mid [x,x]\in \cI_p\}$, or simply 
$\tl{\cI}_p$. Then $\cI_p$ 
is a closed $*$-ideal in $\cA$ and it is known, cf.\ \cite{Apostol},
that the quotient $\cA_p:=\cA/\cI_p$ is a $C^*$-algebra 
with $C^*$-norm $\Vert a+\cI_p \Vert_{\cA_p}:=p(a)$ for 
$a \in \cA$.
Also, $\tl{\cI}_p$ is a closed $\cA$-submodule in $\cH$ and 
the quotient module $\cH_p:=\cH/\tl{\cI}_p$ is a 
Hilbert module over the $C^*$-algebra $\cA_p$, with module action given by 
\begin{equation*}
(h+\tl{\cI}_p)(a+\cI_p):=ha+\tl{\cI}_p,\quad h\in\cH,\ a\in\cA, 
\end{equation*}
and gramian given by 
\begin{equation*}
[ h+\tl{\cI}_p, k+\tl{\cI}_p]_{\cH_p}:=
[ h,k]_{\cH}+\cI_p,\quad h,k\in\cH,\ a\in\cA.
\end{equation*}

On the other hand, when $\cH$ and $\cK$ are Hilbert modules 
over the same locally $C^*$-algebra $\cA$, 
 the space of all adjointable linear operators 
$T\colon\cH \rightarrow\cK$, denoted by $\cL^*(\cH,\cK)$,
has some additional properties, when compared to VH-spaces. 
Any operator $T \in \cL^*(\cH,\cK)$ 
is automatically a module map and continuous, cf.\ \cite{Weidner} or
Lemma~3.2 in \cite{Zhuraev},  
in particular, $T(h\cdot a)=T(h)\cdot a$ for all $h\in\cH$, $a\in\cA$ and 
$\cL^*(\cH,\cK) = \cL^{*}_{c}(\cH,\cK)$, see Subsection~\ref{ss:vhs} 
for notation. 

For fixed $p\in
S_*(\cA)$, any operator 
$T \in \cL^*(\cH,\cK)$ induces an adjointable, hence a continuous module map 
operator $T_p$ from the Hilbert $\cA_p$-module $\cH_p$ to 
the Hilbert $\cA_p$-module $\cK_p$, via 
\begin{equation}\label{e:quotopr}
T_p( h+\tl{\cI}_p^\cH ):=Th+\tl{\cI}_p^\cK,\quad h\in\cH, 
\end{equation}
with adjoint 
\begin{equation}\label{e:quotadjopr}
T^{*}_p( k+\tl{\cI}_p^\cK ):=T^{*}k+\tl{\cI}_p^\cH,\quad k\in\cK.
\end{equation}
Moreover, there is a constant $C \geq 0$ such that 
\begin{equation}\label{e:oprbddp}
\tl{p}_{\cK}(Th)\leq C\, \tl{p}_{\cH}(h),\quad h\in\cH,
\end{equation} e.g.\ see  Lemma~3.2 in \cite{Zhuraev}.

A topology on $\cL^*(\cH,\cK)$ can be defined via the collection of
seminorms $\{\overline p_{\cH,\cK}\}_{p\in S_*(\cA)}$: for arbitrary 
$p\in S_*(\cA)$, 
\begin{equation}\label{e:oprsn}
\overline{p}_{\cH,\cK}(T):=\Vert T_p \Vert,\quad T\in\cL^*(\cH,\cK),  
\end{equation} where $\|\cdot\|$ denotes the operator norm in 
$\cL^*(\cH_p,\cK_p)$,
equivalently, $\Vert T_p \Vert$ is the infimum of all $C\geq 0$ satisfying 
inequality \eqref{e:oprbddp}. 
For the case $\cH=\cK$, these seminorms become 
$C^*$-seminorms and they turn $\cL^*(\cH)$ into a locally 
$C^*$-algebra, c.f.\  \cite{Phillips} and \cite{Zhuraev}. 

For a locally $C^*$-algebra $\cA$, let $M_n(\cA)$ 
denote the $*$-algebra of all $n \times n$ matrices 
over $\cA$. $M_n(\cA)$ becomes a locally $C^*$-algebra 
considered with the topology generated by the $C^*$-seminorms 
\begin{equation*}
p_n( [a_{ij}]_{i,j=1}^{n}):=
\Vert [a_{ij}+\cI_p]_{i,j=1}^{n} \Vert_{M_n(\cA_p)},\quad [a_{ij}]_{i,j=1}^n\in M_n(\cA),
\end{equation*}
where $\|\cdot\|_{M_n(\cA_p)}$ is the $C^*$-norm on 
the $C^*$-algebra $M_n(\cA_p)$.

\subsection{Kernels with Values Adjointable Operators in Hilbert Locally
  $C^*$-Modules.}\label{ss:kvaola}
Let $\cH$ be a Hilbert module over 
a locally $C^*$-algebra $\cA$ and $\fk\colon X\times X\ra \cL^*(\cH)$ a 
positive semidefinite kernel. Then, for each seminorm 
$p \in S_*(\cA)$, a kernel 
\begin{equation}\label{e:quotker}
\fk_p:X \times X \ra \cL^{*}(\cH_p),\quad  
\fk_{p}(x,y):=\fk(x,y)_p\quad \mbox{for all}\,\, x,y \in X 
\end{equation}
is defined, where $\fk(x,y)_p$ is defined as in \eqref{e:quotopr}. It is
easy to see that $\fk_p$ is positive semidefinite.

An 
\emph{$\cH$-reproducing kernel Hilbert $\cA$-module $\cR$} is a 
Hilbert $\cA$-module, which satisfies, along with (vhrk2) 
and (vhrk3),
\begin{itemize}
\item[(vhrk1)$'$] $\cR$ is a submodule of the $\cA$-module $\cF(X;\cH)$, 
with all algebraic operations.  
\end{itemize}
Note that, in this case, the axiom (vhrk4) is automatically satisfied due to
the fact that, in the case of Hilbert modules over locally $C^*$-algebras, any
adjointable, hence continuous, linear operator has continuous adjoint.

The following lemma shows that, in this special case of kernels with values
adjointable operators on Hilbert modules over locally $C^*$-algebras, the
boundedness condition (b2) in Theorem~\ref{t:vhinvkolmo2} is automatic.

\begin{lemma}\label{l:autoadjbdd}
Let $\cA$ be a locally $C^{*}$-algebra, let $\cH$ be a 
Hilbert $\cA$-module, let $X$ be a nonempty set and let 
$\fk\colon X \times X \ra \cL^{*}(\cH)$ be a positive 
semidefinite kernel. Then  
for any seminorm $p \in S_*(\cA)$ 
and any $x \in X$ there exists  
a constant $c_p(x)\geq 0$ such that for all $\{ y_i \}_{i=1}^{n} \in X$,
$\{ h_i \}_{i=1}^{n} \in \cH$ we have
\begin{equation*}
p ( \sum_{i,j=1}^{n} [ \fk(x,y_i)h_i, \fk(x, y_j)h_j ]_{\cH} )
\leq c_p(x)\, p ( \sum_{i,j=1}^{n} 
[ \fk(y_j, y_i)h_i, h_j ]_{\cH} ). 
\end{equation*}
\end{lemma}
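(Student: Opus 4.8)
The plan is to obtain this instance of the boundedness condition (b2) as a direct application of Proposition~\ref{p:autob2mker}, so that the entire task reduces to checking that, for every $x\in X$, the diagonal operator $\fk(x,x)$ is m-topologisable. First I would record that $\fk(x,x)$ is a positive operator: taking $n=1$ in the positive semidefiniteness condition \eqref{e:npos} gives $[\fk(x,x)h,h]_\cH\geq 0$ for all $h\in\cH$. Moreover $\fk(x,x)\in\cL^*(\cH)=\cL_c^*(\cH)$, since on a Hilbert module over a locally $C^*$-algebra every adjointable operator is automatically a continuous module map. Thus $\fk(x,x)$ is a positive operator in $\cL_c^*(\cH)$, and Proposition~\ref{p:autob2mker} becomes applicable as soon as m-topologisability is established.

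The special feature of Hilbert modules over locally $C^*$-algebras that I would exploit is the contraction-type estimate \eqref{e:oprbddp}: for each $p\in S_*(\cA)$ and each $T\in\cL^*(\cH)$ one has $\tl{p}_\cH(Th)\leq \|T_p\|\,\tl{p}_\cH(h)$, with the \emph{same} seminorm $\tl{p}_\cH$ on both sides (this is exactly the phenomenon that is unavailable for general VH-spaces and is responsible for (b2) not being automatic in the general setting). Fixing $T=\fk(x,x)$ and iterating this inequality $n$ times yields $\tl{p}_\cH(T^nh)\leq \|T_p\|^n\,\tl{p}_\cH(h)$ for all $n\in\NN$ and $h\in\cH$, which is precisely the m-topologisability estimate \eqref{e:mtop} for the $C^*$-seminorm $p$, with growth constant $D_p=\|T_p\|$ and continuous seminorm $r=\tl{p}_\cH$.

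It then remains to promote this from the $C^*$-seminorms in $S_*(\cA)$ to an arbitrary $q\in S(\cA)$, as the definition of m-topologisability demands. Here I would use that $S_*(\cA)$ is directed under the pointwise maximum and generates the topology of $\cA$, so that any continuous increasing seminorm $q$ is dominated as $q\leq C\,p_0$ for some $C\geq 0$ and some $p_0\in S_*(\cA)$. Since $q$ is increasing and $\tl{q}=q([\cdot,\cdot])^{1/2}$, this gives $\tl{q}(T^nh)\leq C^{1/2}\,\tl{p_0}(T^nh)\leq \|T_{p_0}\|^n\,(C^{1/2}\tl{p_0}(h))$, i.e. \eqref{e:mtop} holds for $q$ with $D_q=\|T_{p_0}\|$ and $r=C^{1/2}\tl{p_0}$. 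Hence $\fk(x,x)$ is m-topologisable for every $x\in X$, Proposition~\ref{p:autob2mker} applies, and it delivers the asserted inequality for all $p\in S(\cA)$, in particular for all $p\in S_*(\cA)\subseteq S(\cA)$.

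I expect the only genuinely delicate point to be the seminorm bookkeeping: verifying that \eqref{e:oprbddp} really carries the same seminorm on both sides (so that iteration does not degrade the estimate) and that the passage from $S_*(\cA)$ to $S(\cA)$ keeps the growth constant $D_q$ and the auxiliary seminorm $r$ independent of $n$ and $h$. Once m-topologisability is secured, the conclusion is an immediate invocation of Proposition~\ref{p:autob2mker}, whose output has the same seminorm $p$ on both sides and therefore matches the statement verbatim.
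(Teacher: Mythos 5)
Your proposal is correct and follows the paper's own proof essentially verbatim: the paper likewise reduces the lemma, via Proposition~\ref{p:autob2mker}, to showing that each diagonal operator $\fk(x,x)$ is m-topologisable, and establishes this by iterating the estimate \eqref{e:oprbddp} from Lemma~3.2 of \cite{Zhuraev}, which has the same seminorm $\tl{p}$ on both sides. The only difference is that you explicitly carry out the promotion of the iterated estimate from the $C^*$-seminorms in $S_*(\cA)$ to arbitrary seminorms in $S(\cA)$ (as the definition of m-topologisability formally requires), a bookkeeping step the paper leaves implicit.
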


\begin{proof}
By Proposition \ref{p:autob2mker}, it is enough to 
show that $\fk(x,x)$ is an m-topologisable operator 
for every $x\in X$. We use Lemma~3.2 in \cite{Zhuraev}, more precisely,  
we let $T:=\fk(x,x)$ in inequality \eqref{e:oprbddp} and get 
a constant $c_p(x)\geq 0$ such that 
\begin{equation*}
\tl{p}(\fk(x,x)h)\leq c_p(x)\, \tl{p}(h),\quad h\in\cH.
\end{equation*} 
This gives
\begin{equation*}
\tl{p}(\fk(x,x)^n h)\leq c_p(x)\, \tl{p}(\fk(x,x)^{n-1} h)\leq\dots
\leq c_p(x)^{n-1}\, \tl{p}(\fk(x,x)h)\leq 
c_p(x)^{n}\, \tl{p}(h)
\end{equation*} 
for all $n\in\NN$ and $h\in\cH$. Therefore, the operator 
$\fk(x,x)$ is m-topologisable.  
\end{proof}

As a consequence of the previous lemma and
Proposition~\ref{p:vhinvkolmomodule}, we have

\begin{theorem}\label{t:vhinvkolmo3} 
Let $\Gamma$ be a $*$-semigroup that acts 
on the nonempty set $X$ and let $\fk\colon X\times X\ra\cL^*(\cH)$ 
be a kernel, 
for some Hilbert module $\cH$ over a locally $C^*$-algebra $\cA$. 
Then the following assertions are equivalent:

\begin{itemize}
\item[(1)] $\fk$ is positive semidefinite 
and invariant under the action of $\Gamma$ on $X$ and, additionally, 
the following condition hold:
\begin{itemize}
\item[(b1)] For any $\xi \in \Gamma$ and 
any seminorm $p \in S(Z)$, there exists a seminorm 
$q \in S_*(A)$ and 
a constant $c_p(\xi)\geq 0$ such that for all $n \in \NN$,
$\{h_i \}_{i=1}^{n} \in \cH$, $\{ x_i \}_{i=1}^{n} \in X$ we have
\begin{equation*}
p ( \sum_{i,j=1}^{n} [ \fk(\xi\cdot x_i, \xi\cdot x_j)h_j, h_i ]_{\cH} )
\leq c_p(\xi)\,  q( \sum_{i,j=1}^{n} 
[ \fk(x_i, x_j)h_j, h_i ]_{\cH} ). 
\end{equation*} 
\end{itemize}
\item[(2)] $\fk$ has a $\Gamma$-invariant Hilbert $\cA$-module
linearisation $(\cK;\pi;V)$, that is, 
\begin{itemize}
\item[(ihl1)] $(\cK;V)$ is a Hilbert $\cA$-module linearisation of $\fk$.
\item[(ihl2)] $\pi\colon \Gamma\ra\cL^*(\cH)$ is a $*$-representation.
\item[(ihl3)] $V(\xi\cdot x)=\pi(\xi)V(x)$ for 
all $\xi\in\Gamma$ and all $x\in X$.
\end{itemize}
\item[(3)] $\fk$ admits an $\cH$-reproducing kernel Hilbert $\cA$-module 
$\cR$ and there exists a $*$-representation 
$\rho\colon \Gamma\ra\cL^*(\cR)$ such that 
$\rho(\xi)\fk_xh=\fk_{\xi\cdot x}h$ for all $\xi\in\Gamma$, $x\in X$, $h\in\cH$.
\end{itemize}
\end{theorem}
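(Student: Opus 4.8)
The plan is to read this theorem as the specialisation of Proposition~\ref{p:vhinvkolmomodule} to the case where the VH-module $\cH$ is a Hilbert module over the locally $C^*$-algebra $\cA$, the whole point being that in this setting the boundedness condition (b2) is automatic and hence may be dropped from assertion (1). First I would record the structural simplifications that occur here. Since $\cA$ is an admissible space (Example~\ref{ex:as}.(3)) and $\cH$ is a Hilbert $\cA$-module, $\cH$ is a VH-module over $\cA$ (Example~\ref{e:evhs}.(2)). Moreover, as recalled in Subsection~\ref{ss:hmolcsa}, every adjointable operator between Hilbert modules over $\cA$ is automatically a continuous module map with continuous adjoint, so that $\cL^*(\cH)=\cL_c^*(\cH)$ and $\cL^*(\cK)=\cL_c^*(\cK)$. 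Consequently the kernel $\fk$ automatically takes values in $\cL_c^*(\cH)$, a Hilbert $\cA$-module linearisation is exactly a VH-module linearisation, and the $*$-representations $\pi,\rho$ in assertions (2) and (3) are automatically continuous and continuously adjointable. Thus assertions (2) and (3) of the present theorem coincide with assertions (2) and (3) of Proposition~\ref{p:vhinvkolmomodule}.

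The remaining point is that assertion (1) here omits the boundedness condition (b2). By Proposition~\ref{p:vhinvkolmomodule}, assertion (1) of Theorem~\ref{t:vhinvkolmo2}---positive semidefiniteness, invariance, together with both (b1) and (b2)---is equivalent to (2) and to (3). I would therefore argue that in the present context (b2) is redundant. This is precisely the content of Lemma~\ref{l:autoadjbdd}: for the positive semidefinite kernel $\fk$ and each $x\in X$, for every $p\in S_*(\cA)$ there is a constant $c_p(x)\geq 0$ so that the inequality in (b2) holds with $q=p$. Since the $C^*$-seminorms $S_*(\cA)$ are increasing and generate the topology of $\cA$ through the absolutely convex solid neighbourhoods $\{p<1\}$, whose Minkowski functionals recover the $p$ themselves, Remark~\ref{r:czero}.(1) shows that it suffices to verify (b1) and (b2) on the generating family $S_*(\cA)$. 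Hence (b2) holds in full, so assertion (1) of the present theorem is equivalent to assertion (1) of Theorem~\ref{t:vhinvkolmo2}, and the chain (1)$\LRa$(2)$\LRa$(3) follows immediately from Proposition~\ref{p:vhinvkolmomodule}.

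There is essentially no obstacle left once Lemma~\ref{l:autoadjbdd} and Proposition~\ref{p:vhinvkolmomodule} are in hand; the substantive work---reducing the automatic validity of (b2) to the m-topologisability of the diagonal operators $\fk(x,x)$, which here follows from the operator-norm bound \eqref{e:oprbddp} available for Hilbert modules over locally $C^*$-algebras---has already been carried out in those two results. The one item demanding a little care, and the only place where I would slow down, is the bookkeeping of seminorm families: reconciling the cone $S(\cA)$ used to state (b2) in Theorem~\ref{t:vhinvkolmo2} with the non-cone family $S_*(\cA)$ supplied by Lemma~\ref{l:autoadjbdd}. This is settled by Remark~\ref{r:czero}.(1) as indicated above, which lets one check the boundedness conditions only on a topology-generating family of increasing seminorms.
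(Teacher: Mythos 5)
Your proposal is correct and takes essentially the same approach as the paper: the paper obtains Theorem~\ref{t:vhinvkolmo3} exactly as a consequence of Lemma~\ref{l:autoadjbdd} (which makes condition (b2) automatic, even with $q=p$ and $p\in S_*(\cA)$) combined with Proposition~\ref{p:vhinvkolmomodule}, using the identification $\cL^*(\cH)=\cL_c^*(\cH)$ for Hilbert modules over locally $C^*$-algebras. Your explicit reconciliation of the seminorm families $S(\cA)$ and $S_*(\cA)$ via Remark~\ref{r:czero}.(1) and directedness is just the routine bookkeeping the paper leaves implicit.
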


As a consequence of the previous theorem, it follows that positive semidefinite
kernels with values adjointable operators on Hilbert modules over locally
$C^*$-algebras always have Hilbert modules linearisations, equivalently, they 
admit reproducing kernel Hilbert modules.

\begin{corollary}\label{c:vhinvkolmohm}
Let $\fk\colon X\times X\ra\cL^*(\cH)$ 
be a kernel on a nonempty set $X$, 
for some Hilbert module $\cH$ over a locally $C^*$-algebra $\cA$. 
Then the following assertions are equivalent:

\begin{itemize}
\item[(1)] $\fk$ is positive semidefinite. 
\item[(2)] $\fk$ has a  Hilbert $\cA$-module
linearisation $(\cK;V)$.
\item[(3)] $\fk$ admits an $\cH$-reproducing kernel Hilbert $\cA$-module 
$\cR$.
\end{itemize}
\end{corollary}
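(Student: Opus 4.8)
The plan is to obtain this corollary as the specialisation of Theorem~\ref{t:vhinvkolmo3} to the trivial $*$-semigroup, the only nonformal ingredient being that both boundedness conditions of that theorem become automatic in the present setting. Concretely, I would let $\Gamma=\{\epsilon\}$ be the trivial $*$-group acting unitally and trivially on $X$ (so $\epsilon\cdot x=x$), so that every kernel $\fk$ is vacuously $\Gamma$-invariant in the sense of \eqref{e:invariant}. With this choice, assertion (1) of Theorem~\ref{t:vhinvkolmo3} should collapse to the bare statement that $\fk$ is positive semidefinite, while assertions (2) and (3) of that theorem reduce to assertions (2) and (3) of the corollary, because a $*$-representation $\pi$ (respectively $\rho$) of the trivial group is just the identity operator and the invariance clauses (ihl3) become vacuous.

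To justify the collapse of condition (1), I would argue the two boundedness conditions away. Since $\Gamma$ is a group with $\epsilon^*=\epsilon=\epsilon^{-1}$, the boundedness condition (b1) holds automatically by Remark~\ref{r:czero}.(2). The decisive point is that the boundedness condition (b2) is automatic here: this is exactly the content of Lemma~\ref{l:autoadjbdd}, which is in turn deduced from Proposition~\ref{p:autob2mker} once one knows that each diagonal operator $\fk(x,x)$ is m-topologisable on a Hilbert module over a locally $C^*$-algebra. For completeness I would also record the routine reverse implications independently of the machinery: the implication (2)$\Ra$(1) is immediate, since for a linearisation $\fk(x,y)=V(x)^*V(y)$ one has, for all $x_1,\dots,x_n\in X$ and $h_1,\dots,h_n\in\cH$,
\[
\sum_{i,j=1}^n[\fk(x_i,x_j)h_j,h_i]_\cH=\Bigl[\sum_{j=1}^n V(x_j)h_j,\ \sum_{i=1}^n V(x_i)h_i\Bigr]_\cK\geq 0,
\]
exactly as in the proof of Lemma~\ref{l:mincont}.(3); and the implication (3)$\Ra$(2) is Proposition~\ref{p:vhlvsrk}.(3) read in the module category.

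The main obstacle is therefore not in the assembly but is already packaged in Lemma~\ref{l:autoadjbdd}, namely the verification that (b2) holds automatically via m-topologisability of the diagonal; at the level of general VH-spaces this remains open, as noted in Remark~\ref{r:trouble}. Granting that lemma, the corollary is a clean specialisation of Theorem~\ref{t:vhinvkolmo3}, and the only care needed is bookkeeping: checking that dropping the trivial $\Gamma$ genuinely lands one in the Hilbert $\cA$-module (and not merely VH-module) category, which is guaranteed because Theorem~\ref{t:vhinvkolmo3} is already phrased in the Hilbert module language.
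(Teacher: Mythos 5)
Your proposal is correct and matches the paper's intended argument: the corollary is stated there precisely as a consequence of Theorem~\ref{t:vhinvkolmo3}, obtained by specialising to the trivial $*$-group acting trivially on $X$, with (b1) vacuous (cf.\ Remark~\ref{r:czero}.(2)) and (b2) already absorbed into that theorem via Lemma~\ref{l:autoadjbdd} and Proposition~\ref{p:autob2mker}. Your additional direct checks of (2)$\Ra$(1) and (3)$\Ra$(2) are harmless bookkeeping consistent with the paper's machinery.
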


\subsection{Completely Positive Maps.}\label{ss:cpm}
Let $\cH$ be a Hilbert $\cA$-module for some 
locally $C^*$-algebra $\cA$. Let $\cB$ 
be another locally $C^*$-algebra, let
$\phi \colon \cB \ra \cL^{*}(\cH)$ be a linear map, 
and consider the kernel $\fk$ 
associated to $\phi$ as in \eqref{e:kerphi}, that is, $\fk(a,b)=\phi(a^*b)$
for all $a,b\in \cB$. 
Then $\fk$ is invariant under the (multiplicative) 
action of $\cB$ on itself. Keeping in mind that, any $*$-algebra is, in
particular, a (multiplicative) $*$-semigroup, note that a
$\cB$-invariant Hilbert module linearisation of $\fk$ simply
is a $\cB$-invariant VH-space linearisation $(\cK;\pi;V)$ of $\fk$, 
such that $\cK$ is a Hilbert $\cA$-module.

\begin{remark}\label{r:psdcp}
Let $\cC$ and $\cD$ be locally
$C^*$-algebras. For a linear map $\phi:\cC\ra \cD$, recall that 
$\phi$ is \emph{completely positive} if
for every $n \in \NN$, the map 
$\phi^{(n)}\colon M_n(\cC)\ra M_n(\cD)$, 
$[a_{ij}]_{i,j=1}^{n} \mapsto [\phi(a_{ij})]_{i,j=1}^{n}$ 
is positive. Let $\fk$ be the kernel
associated to $\phi$ as in \eqref{e:kerphi}. Then 
$\fk$ is positive semidefinite if and only if 
$\phi$ is completely positive. This follows from the 
fact that any positive matrix in $M_n(\cC)$
can be written as the sum of positive matrices of form
$[x_{i}^{*}x_j]_{i,j=1}^{n}$, e.g.\ see \cite{Paulsen}.\end{remark}

Recall the definition of the strict topology on $\cL^*(\cH,\cK)$ for
two VH-spaces $\cH$ and $\cK$ over the same admissible space $Z$ in
Subsection~\ref{ss:as}. 
Given a linear completely positive map 
$\phi:\cB\ra\cL^*(\cH)$ for $\cB$ a 
locally $C^*$-algebra, $\cH$ a VH-space 
over a topologically admissible space $Z$, 
we say that $\phi$ is \emph{strict} if 
$(\phi(e_i))_i$ is a Cauchy net in the strict 
topology for some approximate unit $(e_i)_i$ in $\cB$.

\begin{theorem}[Theorem 4.6 in \cite{Joita}]
\label{t:hilmodkasp}
Let $\cA$ and $\cB$ be locally $C^*$-algebras 
and $\cH$ be a Hilbert
module over $\cA$. Let 
$\phi: \cB \ra \cL^{*}(\cH)$ be a linear map. Then the following are equivalent:
\begin{itemize}
\item[(1)] $\phi$ is a completely positive, strict, continuous 
map. 
\item[(2)] There exists $\cK$ a Hilbert
module over $\cA$, a continuous $*$-representation 
$\pi: \cB \ra \cL^{*}(\cK)$ and $W \in \cL^{*}(\cH, \cK)$
such that $\phi(a) = W^{*}\pi(a)W$ for all $a \in \cB$. 
\end{itemize}

Moreover, in case any of assertions \emph{(1)}, \emph{(2)} 
holds, the 
space $\cK$ in \emph{(2)} can be constructed minimal,  
in the sense that 
$\cK$ is the closure of $\lin\{ \pi(b)Wh \mid b \in \cB,\, h \in \cH \}$, 
and any such minimal Hilbert module is unique up to unitary equivalence. 
\end{theorem}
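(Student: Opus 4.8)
The plan is to realise $\phi$ via the invariant linearisation machinery, applying it to the kernel $\fk(a,b)=\phi(a^{*}b)$ on $X=\cB$ with the $*$-semigroup $\Gamma=\cB$ acting on itself by left multiplication, exactly as in \eqref{e:kerphi}; the engine will be Corollary~\ref{c:stinespringmodulevh} (itself resting on Proposition~\ref{p:vhinvkolmomodule} and Theorem~\ref{t:vhinvkolmo2}), whose boundedness hypotheses (b1) and (b2) I will derive from the three assumptions on $\phi$.

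For the easy implication (2)$\Ra$(1) I would argue directly: writing $V(a)=\pi(a)W$, the identity $\fk(a,b)=\phi(a^{*}b)=W^{*}\pi(a)^{*}\pi(b)W=V(a)^{*}V(b)$ exhibits $\fk$ as a Gram kernel, hence positive semidefinite, so $\phi$ is completely positive by Remark~\ref{r:psdcp}; continuity is clear since $W,W^{*}$ are continuous and $\pi$, being a $*$-homomorphism of locally $C^{*}$-algebras, is automatically continuous; and strictness follows from $\phi(e_{i})=W^{*}\pi(e_{i})W$ together with the fact that $(\pi(e_{i}))_{i}$ is Cauchy in the strict topology along an approximate unit $(e_{i})$.

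The substance is the implication (1)$\Ra$(2). First I would note that $\fk$ is positive semidefinite by Remark~\ref{r:psdcp} and $\cB$-invariant, since $\fk(c\cdot a,b)=\phi(a^{*}c^{*}b)=\fk(a,c^{*}\cdot b)$. Condition (b2) is then automatic by Lemma~\ref{l:autoadjbdd}. To check (b1), I would fix $p\in S_{*}(\cA)$ and $b\in\cB$ and descend to the quotient $C^{*}$-algebra $\cA_{p}$: the map $\phi_{p}=(\cdot)_{p}\circ\phi\colon\cB\ra\cL^{*}(\cH_{p})$ is completely positive and continuous into the $C^{*}$-algebra $\cL^{*}(\cH_{p})$, hence factors through a completely positive map on the quotient $C^{*}$-algebra $\cB_{s}$ for a suitable $s\in S_{*}(\cB)$. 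Writing $R=[a_{1},\dots,a_{n}]$ so that $[a_{i}^{*}a_{j}]_{i,j}=R^{*}R$ and $[a_{i}^{*}b^{*}ba_{j}]_{i,j}=(bR)^{*}(bR)$, the inequality $b^{*}b\le s(b)^{2}$ valid in the unitisation of $\cB_{s}$ yields the operator-matrix domination $[\phi_{p}(a_{i}^{*}b^{*}ba_{j})]_{i,j}\le s(b)^{2}[\phi_{p}(a_{i}^{*}a_{j})]_{i,j}$ in $M_{n}(\cL^{*}(\cH_{p}))\cong\cL^{*}(\cH_{p}^{\oplus n})$; pairing with $(h_{1},\dots,h_{n})$ and taking the $C^{*}$-norm on $\cA_{p}$ gives (b1) with $q=p$ and $c_{p}(b)=s(b)^{2}$.

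With (b1) and (b2) established, Corollary~\ref{c:stinespringmodulevh} will supply a Hilbert $\cA$-module $\cK$, a map $V\colon\cB\ra\cL_{c}^{*}(\cH,\cK)$, and an automatically continuous $*$-representation $\pi\colon\cB\ra\cL^{*}(\cK)$ with $\phi(a^{*}b)=V(a)^{*}V(b)$ and $V(ab)=\pi(a)V(b)$, and it remains to manufacture the single intertwiner $W$. In the unital case this is immediate from Corollary~\ref{c:stinespringmodule2vh}, with $W=V(1)$ and $\phi(b)=V(1)^{*}\pi(b)V(1)$. The hard part will be the nonunital case, where I would adapt Murphy's approximate-unit technique: for an approximate unit $(e_{i})$ of $\cB$ I would prove that $(V(e_{i}))_{i}$ is Cauchy in the strict topology of $\cL^{*}(\cH,\cK)$, the key identity being $[V(e_{i})\xi,V(e_{j})\xi]_{\cK}=[\phi(e_{j}e_{i})\xi,\xi]_{\cH}$, so that the four cross terms $[\phi(e_{k}e_{l})\xi,\xi]_{\cH}$ admit a common limit by continuity and strictness of $\phi$; completeness of the strict topology (Lemma~\ref{l:compstrict}) then yields $W\in\cL^{*}(\cH,\cK)$ with $V(e_{i})\to W$, and passing to the limit in $\phi(e_{i}^{*}be_{j})=V(e_{i})^{*}\pi(b)V(e_{j})$ delivers $\phi(b)=W^{*}\pi(b)W$. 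This simultaneous control of the approximate-unit cross terms, where strictness is indispensable, is the main obstacle; minimality (the closed linear span of $\pi(\cB)W\cH$, which coincides with that of $V(\cB)\cH$ because $V(b)=\pi(b)W$) and uniqueness up to unitary equivalence are then inherited from Corollary~\ref{c:stinespringmodulevh}.
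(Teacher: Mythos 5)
Your skeleton matches the paper's: pass to the kernel $\fk(a,b)=\phi(a^*b)$, get (b2) for free from Lemma~\ref{l:autoadjbdd}, verify (b1) by descending to quotient $C^*$-algebras, invoke the invariant-linearisation machinery, and then build $W$ by a Murphy-style approximate-unit argument closed off by completeness of the strict topology (Lemma~\ref{l:compstrict}). Your derivation of (b1) is a correct and genuinely different route: where the paper (Lemmas~\ref{l:murphybddness} and~\ref{l:autobdd}) proves continuity of $\pi(b)$ on the VE-linearisation by writing $b$ as a combination of unitaries in the unitisation, you use the matrix domination $[a_i^*b^*ba_j]_{i,j}\le s(b)^2[a_i^*a_j]_{i,j}$ in $M_n(\cB_s)$ plus complete positivity of the factored map; this is cleaner and yields the explicit constant $c_p(b)=s(b)^2$ with $q=p$. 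But two steps of your (1)$\Rightarrow$(2) have genuine gaps.

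First, the claim that Corollary~\ref{c:stinespringmodulevh} supplies an ``automatically continuous'' $*$-representation $\pi$. It does not: the corollary (like Theorem~\ref{t:vhinvkolmo2}) only guarantees that each operator $\pi(b)$ is continuous on $\cK$ (which, for Hilbert modules over locally $C^*$-algebras, is automatic anyway), whereas assertion (2) of the theorem requires continuity of the map $\cB\ni b\mapsto\pi(b)\in\cL^*(\cK)$ for the locally $C^*$-topology of $\cL^*(\cK)$. This is not automatic: $*$-morphisms between locally $C^*$-algebras need not be continuous (a discontinuous $C^*$-seminorm, e.g.\ an ultrafilter limit of the coordinate norms on $\prod_{n\in\NN}\CC$, gives a discontinuous $*$-morphism into a $C^*$-algebra). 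The paper needs a dedicated argument, resting on continuity of $\phi$: it shows that $\pi_p(b+\cI_r^\cB):=\pi(b)_p$ is a well-defined $*$-morphism of the $C^*$-algebra $\cB_r$ into $\cL^*(\cK_p)$, hence contractive, whence $\overline{p}_{\cK}(\pi(b))\le d_p\,r(b)$. Your proof must include this step (it is also implicitly used when you pass to limits like $\pi(b)V(e_j)h\ra\pi(b)Wh$).

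Second, the Cauchyness of $(V(e_i))_i$ in the strict topology. Strict Cauchyness requires both $(V(e_i)\xi)_i$ Cauchy in $\cK$ for every $\xi\in\cH$ and $(V(e_i)^*y)_i$ Cauchy in $\cH$ for every $y\in\cK$; you address only the first half, and your justification of it --- that ``the four cross terms $[\phi(e_ke_l)\xi,\xi]_\cH$ admit a common limit by continuity and strictness of $\phi$'' --- does not hold up. Continuity and strictness give the iterated limits ($\phi(e_ie_j)\xi\ra\phi(e_j)\xi$ as $i$ grows for fixed $j$, then $\phi(e_j)\xi\ra T\xi$), but joint convergence of the double net $(\phi(e_ie_j)\xi)_{(i,j)}$ is exactly the difficulty: nothing makes $\phi((e_i-1)e_j)\xi\ra 0$ uniformly in $j$, and the natural Kadison--Schwarz bound leads to $\|\tl\phi((1-e_i)^2)\|$, which strict (not norm) convergence of $(\phi(e_i))_i$ does not control. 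The working argument --- the paper's, going back to Murphy --- avoids individual cross terms altogether: for $j\le i$ one groups them as $[\xi,\phi((e_i-e_j)^2)\xi]$, uses $0\le e_i-e_j\le 1$ in $b(\cB)$, hence $(e_i-e_j)^2\le e_i-e_j$ and $\phi((e_i-e_j)^2)\le\phi(e_i-e_j)$ by positivity of $\phi$, and then kills $p([\xi,\phi(e_i-e_j)\xi])\le\tl{p}(\phi(e_i-e_j)\xi)\,\tl{p}(\xi)$ by strictness via the Schwarz inequality of Lemma~\ref{l:schwarzforposop}. This positivity/monotonicity trick is the missing idea; without it your sketch of the nonunital case does not close. (For the adjoint half, the paper first checks Cauchyness of $V(e_i)^*y$ on the dense span of $V(\cB)\cH$, where $V(e_i)^*V(b)h=\phi(e_ib)h\ra\phi(b)h$, and then extends by the equicontinuity bound $\overline{p}_{\cK,\cH}(V(e_i-e_j)^*)\le d\,r(e_i-e_j)\le c$.)
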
 

We show that this theorem can be obtained as a consequence of our
Theorem~\ref{t:vhinvkolmo3} which tells us that, basically, we have to take
care of two technical obstructions: the boundedness condition (b1)
and the lack of unit of the algebra $\cB$.
We first prove two technical results that will be needed 
for solving the obstruction with the boundedness condition (b1). 

The following lemma uses an idea from the proof of Theorem 2.4 
in \cite{Murphy}.

\begin{lemma}\label{l:murphybddness}
Let $\cA$ be a $C^*$-algebra and $\cH$ be a Hilbert $C^*$-module 
over $\cA$. Let $\cB$ be a $C^*$-algebra and
$\phi\colon \cB \ra \cL^{*}(\cH)$ be a completely positive map. 
Then, for any $b \in \cB$ there exists 
a constant $c(b)\geq 0$ such that, for all $n \in \NN$,
$\{h_i \}_{i=1}^{n} \in \cH$, $\{ x_i \}_{i=1}^{n} \in \cB$ we have
\begin{equation*}
 \Vert \sum_{i,j=1}^{n} [ \phi(x_i^*b^* b x_j)h_j, h_i ]_{\cH} \Vert_\cA
\leq c(b) \, \Vert \sum_{i,j=1}^{n} 
[ \phi(x_i^* x_j)h_j, h_i ]_{\cH} \Vert_\cA. 
\end{equation*} 
\end{lemma}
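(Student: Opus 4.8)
The plan is to prove the sharper inequality with the explicit constant $c(b)=\|b\|^2$, by lifting the estimate to the matrix level and then compressing it. Throughout I would use that, since $\cH$ is a Hilbert module over the $C^*$-algebra $\cA$, the algebra $\cL^*(\cH)$ is a $C^*$-algebra, that $M_n(\cL^*(\cH))$ is canonically identified with $\cL^*(\cH^n)$ where $\cH^n=\cH\oplus\cdots\oplus\cH$ carries the direct-sum gramian of Example~\ref{e:evhs}.(5), and that complete positivity of $\phi$ means exactly that each amplification $\phi^{(n)}\colon M_n(\cB)\ra M_n(\cL^*(\cH))$ is positive, hence order preserving.

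The first and main step is the matrix inequality
\begin{equation*}
[(bx_i)^*(bx_j)]_{i,j=1}^n\leq \|b\|^2\,[x_i^*x_j]_{i,j=1}^n \quad\text{in } M_n(\cB).
\end{equation*}
To prove it I would fix a faithful nondegenerate representation of $\cB$ on a Hilbert space $\cH_0$ and regard the elements of $\cB$ as bounded operators on $\cH_0$; positivity of the self-adjoint difference is then equivalent to nonnegativity of its quadratic form against every $\xi=(\xi_1,\dots,\xi_n)\in\cH_0^n$. Writing $\zeta=\sum_{j=1}^n x_j\xi_j\in\cH_0$, this quadratic form equals $\|b\|^2\|\zeta\|^2-\|b\zeta\|^2$, which is nonnegative because $\|b\zeta\|\leq\|b\|\,\|\zeta\|$. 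This is the point borrowed from \cite{Murphy}; note that it needs no unit in $\cB$, so no passage to a unitisation is required.

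It then remains to apply $\phi^{(n)}$, which is order preserving, to obtain
\begin{equation*}
[\phi(x_i^*b^*bx_j)]_{i,j=1}^n\leq \|b\|^2\,[\phi(x_i^*x_j)]_{i,j=1}^n \quad\text{in } \cL^*(\cH^n),
\end{equation*}
and to compress this operator inequality against $h=(h_1,\dots,h_n)\in\cH^n$. Using that a positive operator $P\in\cL^*(\cH^n)$ satisfies $[Ph,h]_{\cH^n}\geq 0$ and that the direct-sum gramian gives $[Ph,h]_{\cH^n}=\sum_{i,j}[P_{ij}h_j,h_i]_\cH$, I would get
\begin{equation*}
\sum_{i,j=1}^n[\phi(x_i^*b^*bx_j)h_j,h_i]_\cH\leq \|b\|^2\sum_{i,j=1}^n[\phi(x_i^*x_j)h_j,h_i]_\cH
\end{equation*}
as an inequality between positive elements of the $C^*$-algebra $\cA$. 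Since $0\leq a\leq c$ in $\cA$ forces $\|a\|_\cA\leq\|c\|_\cA$, taking norms yields the assertion with $c(b)=\|b\|^2$. The only real obstacle is the matrix inequality: once it is in hand, the remaining steps are order preservation under $\phi^{(n)}$ and routine bookkeeping, the essential content being that the compression $\zeta=\sum_j x_j\xi_j$ collapses the $n$-variable estimate to the single bound $\|b\zeta\|\leq\|b\|\,\|\zeta\|$.
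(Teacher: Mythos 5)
Your proof is correct, but it takes a genuinely different route from the paper's. The paper proves the lemma by invoking its own dilation machinery: it applies Theorem~\ref{t:vhinvkolmo} with $\Gamma=X=\cB$ to the kernel $\fk(a,b)=\phi(a^*b)$, obtaining a minimal $\cB$-invariant VE-space linearisation $(\cK;\pi;V)$; it then passes to the unitization $\tl{\cB}=\cB\oplus\CC$, extends $\pi$ there, writes $b=\sum_{i=1}^{m}\lambda_i u_i$ as a linear combination of unitaries of $\tl{\cB}$, and uses that a $*$-representation sends unitaries to gramian-preserving operators to conclude that $\pi(b)$ is bounded on $\cK$; evaluating $\Vert[\pi(b)k,\pi(b)k]_\cK\Vert_\cA$ at $k=\sum_j V(x_j)h_j$ yields the inequality (this is the technique of Murphy \cite{Murphy} that the paper credits). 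You instead stay entirely at the level of matrices over $\cB$: you prove $[(bx_i)^*(bx_j)]_{i,j}\leq\|b\|^2\,[x_i^*x_j]_{i,j}$ in $M_n(\cB)$ via a faithful representation, push it through the positive map $\phi^{(n)}$ (complete positivity used in its matrix form, cf.\ Remark~\ref{r:psdcp}), compress at $h=(h_1,\dots,h_n)\in\cH^n$ using $M_n(\cL^*(\cH))\cong\cL^*(\cH^n)$ and the direct-sum gramian of Example~\ref{e:evhs}.(5), and finish with order-monotonicity of the norm on positive elements of $\cA$; each of these steps is sound. What your route buys: it bypasses the GNS/linearisation construction and the unitization trick altogether, and it gives the explicit, uniform, and sharper constant $c(b)=\|b\|^2$, whereas the paper's constant depends on a chosen unitary decomposition of $b$ (essentially $(\sum_i|\lambda_i|)^2$). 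What the paper's route buys: it recycles the linearisation $(\cK;\pi;V)$ that is needed anyway in the proof of Theorem~\ref{t:hilmodkasp}, so boundedness is obtained exactly in the form (continuity of $\pi(b)$) in which it is subsequently used, and the same statement then transfers to the locally $C^*$-setting by the quotient argument of Lemma~\ref{l:autobdd}; your inequality would serve equally well as input to that argument.
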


\begin{proof}
We use Theorem~\ref{t:vhinvkolmo}, with $\Gamma=X=\cB$,
in order to obtain a minimal 
$\cB$-invariant VE-space linearisation $(\cK;\pi;V)$, 
where $\cK$ is a VE-space over $\cA$, $\pi\colon \cB \ra \cL^{*}(\cK)$ 
is a $*$-representation and $V\colon \cB \ra \cL^{*}(\cH,\cK)$. As in
Proposition~\ref{p:veinvkolmomodule}, $\cK$ is a VE-module over $\cA$ and, 
via \eqref{e:qujeh}, it is is a pre-Hilbert $\cA$-module. 

Consider $\tl{\cB}=\cB\oplus\CC$, the unitization 
of the $C^*$-algebra $\cB$. Then 
$\pi$ extends uniquely to a $*$-representation 
$\tl{\pi}:\tl{\cB}\ra\cL^{*}(\cK)$, where
$\tl{\pi}( (a,\lambda) )\colon=\pi(a)+\lambda I_\cK$. 
Let $u \in \tl{\cB}$ 
be a unitary element. It is straightforward to check that
$\tl{\pi}(u)$ is a unitary operator, hence continuous. 

Now, consider arbitrary $b \in \cB$ and 
let $u_i \in \tl{\cB}$ be unitary elements 
and $\lambda_i \in \CC$ be scalars
such that $b=\sum_{i=1}^{m}\lambda_i u_i$. Then 
\begin{equation*}
\pi(b)=\tl{\pi}(b)=
\tl{\pi}(\sum_{i=1}^{m}\lambda_i u_i)=
\sum_{i=1}^{m}\lambda_i \tl{\pi}(u_i),
\end{equation*}
therefore $\pi(b)\colon\cK\ra\cK$ is continuous. 
Taking into account that $\cK$ is topologised
by the norm $\cK\ni k\mapsto \|[k,k]_\cK\|_\cA$, this means that, there exists a
constant $c(b)\geq 0$ such that
\begin{equation*}
\|[\pi(b)k,\pi(b)k]_\cK\|\leq c(b)\|[k,k]_\cK\|_\cA,\quad k\in\cK,
\end{equation*} 
whence, in view of \eqref{e:fezero}--\eqref{e:vexic},
we obtain the required inequality. 
\end{proof}

\begin{lemma}\label{l:autobdd}
Let $\cA$ be a locally $C^*$-algebra and $\cH$ be a Hilbert module 
over $\cA$. Let $\cB$ be a locally $C^*$-algebra and let 
$\phi\colon \cB \ra \cL^{*}(\cH)$ be a continuous and completely positive map. 
Then, for any $b \in B$ and any $p \in S_*(\cA)$, there exists 
a constant $c_p(b)\geq 0$ such that, for all $n \in \NN$,
$\{h_i \}_{i=1}^{n} \in \cH$, $\{ x_i \}_{i=1}^{n} \in \cB$, 
we have
\begin{equation*}
p(\sum_{i,j=1}^{n} [ \phi(x_i^*b^*b x_j)h_j, h_i ]_{\cH} )
\leq c_p(b) \, p(\sum_{i,j=1}^{n} 
[ \phi(x_i^* x_j)h_j, h_i ]_{\cH} ). 
\end{equation*} 
\end{lemma}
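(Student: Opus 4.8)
The plan is to reduce the assertion to its $C^*$-algebraic counterpart, Lemma~\ref{l:murphybddness}, by passing to suitable $C^*$-quotients of all three objects $\cA$, $\cH$, and $\cB$. Fix $p\in S_*(\cA)$ and $b\in\cB$, and pass to the quotient $C^*$-algebra $\cA_p=\cA/\cI_p$ together with the Hilbert $\cA_p$-module $\cH_p=\cH/\tl{\cI}_p$, recalling that $\|a+\cI_p\|_{\cA_p}=p(a)$ and that the gramian on $\cH_p$ is $[h+\tl{\cI}_p,k+\tl{\cI}_p]_{\cH_p}=[h,k]_\cH+\cI_p$. The map $\cL^*(\cH)\ni T\mapsto T_p\in\cL^*(\cH_p)$ from \eqref{e:quotopr} is a $*$-homomorphism, hence completely positive, so the composition $\cB\ni b\mapsto\phi(b)_p\in\cL^*(\cH_p)$ is completely positive as well.

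Next I would descend $\cB$ to a $C^*$-algebra. The continuity of $\phi$ with respect to the $C^*$-seminorm $\overline p_{\cH,\cH}(T)=\|T_p\|$ on $\cL^*(\cH)$ from \eqref{e:oprsn}, together with the fact that $S_*(\cB)$ is directed and generates the topology of $\cB$, supplies a $C^*$-seminorm $q\in S_*(\cB)$ and a constant $C\geq 0$ with $\|\phi(b)_p\|=\overline p_{\cH,\cH}(\phi(b))\leq C\,q(b)$ for all $b\in\cB$. In particular $\phi(b)_p=0$ whenever $q(b)=0$, so $b\mapsto\phi(b)_p$ factors through the quotient $C^*$-algebra $\cB_q=\cB/\cI_q^{\cB}$, yielding a well-defined linear map $\psi\colon\cB_q\ra\cL^*(\cH_p)$, $\psi(b+\cI_q^{\cB})=\phi(b)_p$.

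The step I expect to require the most care is verifying that $\psi$ is \emph{completely positive}, since complete positivity is granted only for $b\mapsto\phi(b)_p$ on $\cB$, not a priori for the induced map on the quotient. Here I would invoke the fact that the surjective $*$-homomorphism $\cB\ra\cB_q$, and more generally $M_n(\cB)\ra M_n(\cB_q)$, lifts positive elements: lifting a positive $[\bar a_{ij}]\in M_n(\cB_q)$ to a positive $[a_{ij}]\in M_n(\cB)$ and applying the completely positive map $b\mapsto\phi(b)_p$ shows that $[\psi(\bar a_{ij})]=[\phi(a_{ij})_p]\geq 0$. With $\psi$ now a completely positive map on the $C^*$-algebra $\cB_q$ valued in $\cL^*(\cH_p)$ over the $C^*$-algebra $\cA_p$, Lemma~\ref{l:murphybddness} applies to $\psi$ and the element $\bar b=b+\cI_q^{\cB}$, furnishing a constant $c(\bar b)\geq 0$.

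Finally I would translate the resulting norm inequality on $\cA_p$ back to a seminorm inequality on $\cA$. Using that $\psi(\bar x_i^*\bar b^*\bar b\,\bar x_j)=\phi(x_i^*b^*bx_j)_p$, choosing $k_i=h_i+\tl{\cI}_p\in\cH_p$, and computing $[\phi(x_i^*b^*bx_j)_p\,k_j,k_i]_{\cH_p}=[\phi(x_i^*b^*bx_j)h_j,h_i]_\cH+\cI_p$, the left-hand side of the inequality from Lemma~\ref{l:murphybddness} becomes $p(\sum_{i,j}[\phi(x_i^*b^*bx_j)h_j,h_i]_\cH)$ and the right-hand side becomes $p(\sum_{i,j}[\phi(x_i^*x_j)h_j,h_i]_\cH)$, since the surjectivity of $\cB\ra\cB_q$ lets us realize arbitrary $\bar x_i$ as $x_i+\cI_q^{\cB}$. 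Setting $c_p(b):=c(\bar b)$ then yields exactly the claimed estimate.
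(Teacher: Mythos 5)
Your proposal is correct and follows essentially the same route as the paper's proof: fix $p\in S_*(\cA)$, use the continuity of $\phi$ and directedness of $S_*(\cB)$ to produce $r\in S_*(\cB)$ (your $q$) so that $b\mapsto \phi(b)_p$ factors through a well-defined linear map $\cB_r\ra\cL^*(\cH_p)$ between $C^*$-objects, verify that this quotient map is completely positive, apply Lemma~\ref{l:murphybddness} to it, and translate the resulting norm inequality back through $\Vert a+\cI_p\Vert_{\cA_p}=p(a)$. The only difference is one of presentation: where the paper merely remarks that complete positivity of the quotient map ``can be checked directly by considering the associated kernel,'' you supply the detail by lifting positive matrices through the surjective $*$-homomorphism $M_n(\cB)\ra M_n(\cB_r)$ (writing a positive element as $\bar C^*\bar C$, lifting $\bar C$, and using that $C^*C\geq 0$ in the locally $C^*$-algebra $M_n(\cB)$), which is a valid and slightly more explicit argument for the same step.
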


\begin{proof}
Throughout the proof, we fix $p\in S_*(\cA)$. 
Since $S_*(\cB)$ is directed and 
$\phi$ is continuous, we can find $r \in S_*(\cB)$ and 
$d_p \geq 0$ such that 
\begin{equation}\label{e:barp} 
\ol{p}(\phi(x)) \leq d_p\, r(x),\mbox{ for all }x \in \cB, \end{equation}
where the seminorm $\ol{p}$ is defined as in \eqref{e:oprsn}.
If $r(x)=0$, for some $x\in\cB$, by \eqref{e:barp} 
\begin{equation*}
\ol{p}(\phi(x)) \leq 
d_p\, r(x) = 0, 
\end{equation*}
therefore $\ol{p}(\phi(x))=0$, 
and hence $\phi(x)_p=0$ on $\cH_p$.
It follows that the map  
$\phi_p:\cB_r \ra \cL^*(\cH_p)$ defined by 
\begin{equation}\label{e:quotphi} 
\phi_p( b+\cI_r^\cB ):=\phi(b)_p,\quad b\in\cB,
\end{equation}
where $\cB_r=\cB/\cI_r^\cB$, is a well defined 
linear map. Moreover, $\phi_p$ is completely positive:
this can be checked directly by considering  
the associated kernel and proving that it is 
positive semidefinite.

Finally, applying Lemma \ref{l:murphybddness} 
for the map $\phi_p$, we get that for any $b\in\cB$, considering 
its coset $b+\cI_r^\cB \in \cB_r$, there exists 
a constant $c_p(b+\cI_r^\cB)\geq 0$ such that for all $n \in \NN$, all
$h_1,\ldots,h_n\in\cH$, and all $x_1,\ldots,x_n\in\cB$, considering their
cosets
$\{h_i+\tl{\cI}_p \}_{i=1}^{n} \in \cH_p$ and
$\{ x_i+\cI_r^\cB \}_{i=1}^{n} \in \cB_r$, we have
\begin{align*}
p(\sum_{i,j=1}^{n}& [ \phi(x_i^*b^*b x_j)h_j, 
h_i ]_{\cH} )
=\Vert \sum_{i,j=1}^{n} [ \phi_p((x_i^*b^*b x_j)+\cI_r^\cB)
(h_j+\tl{\cI}_p), 
(h_i+\tl{\cI}_p) ]_{\cH_p} \Vert_{\cA_p} \\
&=\Vert \sum_{i,j=1}^{n} [ \phi_p((x_i+\cI_r^\cB)^* 
(b+\cI_r^\cB)^*(b+\cI_r^\cB)(x_j+\cI_r^\cB))(h_j+\tl{\cI}_p), 
(h_i+\tl{\cI}_p) ]_{\cH_p} \Vert_{\cA_p} \\
&\leq c_p(b+\cI_r^\cB) \, \Vert \sum_{i,j=1}^{n} 
[ \phi_p((x_i+\cI_r^\cB)^*(x_j+\cI_r^\cB))(h_j+\tl{\cI}_p), (h_i+\tl{\cI}_p) 
]_{\cH_p} \Vert_{\cA_p} \\
&=c_p(b+\cI_r^\cB) \, p(\sum_{i,j=1}^{n} 
[ \phi(x_i^*x_j)h_j, h_i ]_{\cH} ).
\end{align*}  
Since once $p$ is fixed $r$ is also fixed, 
it is clear that we can write 
$c_p(b+\cI_r^\cB)=c_p(b)$, and the lemma is proven. 
\end{proof}

\begin{proof}[Proof of Theorem~\ref{t:hilmodkasp}]
(1)$\Ra$(2). Consider the kernel $\fk(a,b)=\phi(a^*b)$, $a,b\in\cB$.
By Theorem~\ref{t:vhinvkolmo3}, 
Lemma~\ref{l:autobdd} and Lemma~\ref{l:autoadjbdd}, we get 
a minimal $\cB$-invariant Hilbert $\cA$-module linearisation $(\cK;\pi;V)$ of
$\fk$. 

We check that $V$ is linear. For $b_1,b_2,c \in \cB$ 
and $\lambda\in\CC$ we have 
\begin{align*}
V(b_1+\lambda b_2)^*V(c)&=\phi((b_1+\lambda b_2)^*c)=
\phi(b_1^*c)+\ol{\lambda}\phi(b_2^*c) \\
&=V(b_1)^*V(c)+\ol{\lambda}V(b_2)^*V(c)=
(V(b_1)+\lambda V(b_2))^*V(c)
\end{align*}
and, by the minimality of $\cK$, it follows that 
$V(b_1+\lambda b_2)=V(b_1)+\lambda V(b_2)$.

We show that $V:\cB\ra\cL^*(\cH,\cK)$ is continuous. By the continuity of
$\phi$, for any seminorm $p\in S_{*}(\cA)$, there exist 
$r\in S_{*}(\cB)$ and $c_p \geq 0$ such that
\begin{equation}\label{e:barb}
\ol{p}_{\cH}(\phi(b^*b)) 
\leq c_p\, r(b)^2,\quad b\in\cB,
\end{equation} hence
\begin{equation*}
\ol{p}_{\cH,\cK}(V(b))^2=\Vert V(b)_{p} \Vert_{\cL(\cH_p,\cK_p)}^{2}=
\Vert V(b)_{p}^*V(b)_{p} \Vert_{\cL(\cH_p)}=
\ol{p}_{\cH}(\phi(b^*b)) 
\leq c_p\, r(b)^2,\quad b\in\cB.
\end{equation*}
 This shows that $V$ is continuous and hence the mapping 
$\cB\ni b\mapsto V(b)^*\in\cL^*(\cK,\cH)$
is also continuous, since 
$\ol{p}_{\cH,\cK}(V(b))=\ol{p}_{\cK,\cH}(V(b)^*)$ 
for all $p\in S_{*}(\cA)$. 

Now let $(e_j)_{j\in\cJ}$ be an approximate unit of 
$\cB$ with respect to which $\phi$ 
is strict. Since $V(e_j)^*V(b)=\phi(e_jb)$ 
and $e_jb\xrightarrow[j]{} b$ for any $b\in\cB$, it follows that, for any $p\in
S_*(\cA)$, we have 
\begin{equation*}
\tl{p}_{\cH}(V(e_j)^*V(b)h-\phi(b)h)=
\tl{p}_{\cH}(\phi(e_jb-b)h)\leq 
\ol{p}_{\cH}(\phi(e_jb-b))\, \tl{p}_{\cH}(h) \ra 0,\quad b\in\cB,\ h\in\cH. 
\end{equation*} 
It follows that $V(e_j)^*y$ converges to 
$\sum_{l=1}^n\phi(b_l)h_l$ whenever 
$y=\sum_{l=1}^n V(b_l)h_l$, 
i.e.\ for all $y\in\cK_0$. Let 
$p\in S_*(\cA)$. Since, $\cB\ni b\mapsto V(b)^*\in\cL^*(\cK,\cH)$ 
is continuous, there exists $r\in S_{*}(\cB)$ 
such that \begin{equation*}\label{e:vadjau}
\ol{p}_{\cK,\cH}(V(e_i-e_j)^*)\leq d \, r(e_i-e_j) \leq c,\quad i,j\in\cJ,
\end{equation*} 
with $d\geq 0$ and $c > 0$ some constant numbers independent of $i,j\in\cJ$. 
Given $\epsilon > 0$, choose $y_0\in\cK_0$ such that 
\begin{equation*}
\tl{p}_{\cK}(y-y_0)\leq \frac{\epsilon}{2c}\end{equation*} 
and $i,j\in\cJ$ such that 
\begin{equation*}
\tl{p}_{\cH}(V(e_i-e_j)^*y_0)\leq\frac{\epsilon}{2}.\end{equation*}
Using these inequalities we have 
\begin{align*}
\tl{p}_{\cH}&(V(e_i)^*y-V(e_j)^*y)
=p([V(e_i)^*y-V(e_j)^*y,V(e_i)^*y-V(e_j)^*y]_{\cH})^{\frac{1}{2}} \\
&=p([V(e_i-e_j)^*y_0+V(e_i-e_j)^*(y-y_0),V(e_i-e_j)^*y_0
+V(e_i-e_j)^*(y-y_0)]_{\cH})^{\frac{1}{2}}\\ 
&=\tl{p}_{\cH}(V(e_i-e_j)^*y_0+V(e_i-e_j)^*(y-y_0)) \\
&\leq \tl{p}_{\cH}(V(e_i-e_j)^*y_0)+\tl{p}_{\cH}(V(e_i-e_j)^*(y-y_0))\\
&\leq \frac{\epsilon}{2}+
\ol{p}_{\cK,\cH}(V(e_i-e_j)^*)\, \tl{p}_{\cK}(y-y_0) \\
&\leq \frac{\epsilon}{2}+c\frac{\epsilon}{2c}
=\epsilon.
\end{align*}
Hence $(V(e_j)^*y)_{j\in\cJ}$ is a Cauchy net in $\cH$ 
for all $y\in\cK$, hence convergent. 

Now let $p\in S_{*}(\cB)$ 
 and assume that $i,j\in\cJ$ are such that $j\leq i$, hence
$0\leq e_j\leq e_i$. Since $e_i,e_j \in b(\cB)$, 
the $C^*$-algebra of all 
bounded elements of $\cB$, 
we have $(e_i-e_j)\leq(e_i-e_j)^2$, hence 
\begin{align*}
\tl{p}_{\cK}(V(e_i)h-V(e_j)h)^2&=
p([h,(V(e_i)-V(e_j))^*(V(e_i-e_j))h]_{\cH}) \\
&=p([h,\phi((e_i-e_j)^2)h]_{\cH}) \\
&\leq p([h,\phi(e_i-e_j)h]_{\cH}).
\end{align*}
Since $(\phi(e_j))_j$ is a Cauchy net for the strict topology 
of $\cL^*(\cH)$, by Lemma 
\ref{l:schwarzforposop} with $T:=\phi(e_i-e_j)$ and a standard argument, 
it follows that the net $(V(e_j)h)_j$ is Cauchy 
in $\cK$. Hence we have that 
$(V(e_j))_j$ is a Cauchy net in 
$\cL^*(\cH,\cK)$. By Lemma \ref{l:compstrict}, 
$\cL^*(\cH,\cK)$ with the strict topology 
is complete, hence there is 
$W\in\cL^*(\cH,\cK)$ such that 
$V(e_j)$ converges to $W$, with respect to the strict topology. 

We prove now that the $*$-representation $\pi\colon\cB\ra\cL^*(\cK)$ is
continuous. Let $p\in S_*(\cA)$ arbitrary. Since $\phi\colon\cB\ra\cL^*(\cH)$
is continuous, there exist $r\in S_*(\cB)$ and a constant $c_p\geq 0$ such
that \eqref{e:barp} holds.
Define $\pi_p\colon \cB_r\ra \cL^*(\cK_p)$ by
\begin{equation}\label{e:ppb}
\pi_p(b+\cI_r^\cB):=\pi(b)_p,\quad b\in\cB.
\end{equation}
In order for the definition in \eqref{e:ppb} to be correct, we have to show
that, if $b\in\cB$ is such that $r(b)=0$ then $\pi(b)_p=0$. Indeed, first
observe that, since $r$ is submultiplicative, from \eqref{e:barb} it follows
that, for any $x,y\in\cB$, we have
 $\overline{p}_\cH(\phi(y^*bx))=0$, that is, $\phi(y^*bx)h\in \tl{\cI_p}$ for all 
$h\in\cH$. Then,
for arbitrary $h,g\in\cH$ and $x,y\in \cB$, we have
\begin{align*}[\pi(b)_p(V(x)h+\tl{\cI_p}),(V(y)g+\tl{\cI_p})]_{\cK_p}
& =[V(y)^*\pi(b)V(x)h,g]_\cH+\cI_p\\
& =  [\phi(y^*bx)h,g]_\cH+\cI_p=\cI_p.
\end{align*} 
Since $\cK_0$, the span of $V(\cB)\cH$, is dense in $\cK$, it follows that
$\pi_p(b+\cI_r^\cB)=0$ hence, $\pi_p$ in \eqref{e:ppb} is correctly defined. It
is easy to see that $\pi_p$ is a $*$-morphism of the $C^*$-algebra
$\cB_r$ with values in the $C^*$-algebra $\cL^*(\cK_p)$, hence bounded. Letting
$d_p=\|\pi_p\|\geq 0$, where $\|\pi_p\|$ denotes 
the operator norm of this $*$-morphism $\pi_p$, it follows that
\begin{equation*}\overline{p}_\cK(\pi(b))\leq d_p\, r(b),\quad b\in\cB,
\end{equation*}
which proves the continuity of the $*$-representation $\pi$.

For any $b\in\cB$ and $h\in\cH$,
by the continuity of $V$ and of $\pi(b)$, we have 
\begin{equation*}
\pi(b)Wh=\lim_j\pi(b)V(e_j)h=
\lim_j V(be_j)h=V(b)h,
\end{equation*}
hence $\pi(b)W=V(b)$. 
Since the span of ${V(\cB)\cH}$ is dense in $\cK$, it follows 
that the span of ${\pi(\cB)W\cH}$ is dense in $\cK$. Finally, for 
any $h\in\cH$ and $b\in\cB$ we have 
\begin{equation*}
W^*\pi(b)Wh=W^*V(b)h=\lim_iV(e_i)^*V(b)h
=\lim_i\phi(e_ib)h=\phi(b)h,
\end{equation*} 
hence $W^*\pi(b)W=\phi(b)$. Uniqueness 
up to unitary equivalence follows as usually.

(2)$\Ra$(1). It can be shown, as in the proof of (2)$\Ra$(1) 
of Theorem \ref{t:vhinvkolmo}, that the associated kernel $\fk$ to $\phi$ is 
positive semidefinite hence, as in Remark \ref{r:psdcp}, we have that $\phi$ is 
completely positive. 

Since the span of
${\pi(\cB)W\cH}$ is dense in $\cK$ and $\pi$ 
is continuous, it follows that 
$\pi(e_i)\xrightarrow[i]{} I_{\cK}$ strictly 
for any approximate unit $(e_i)_i$ of $\cB$, 
where $I_{\cK}$ is the identity operator of $\cK$. 
From this we obtain that $\phi(e_i)\xrightarrow[i]{} W^*W$ strictly.

On the other hand, since $\phi(b)=W^*\pi(b)W$ 
for all $b\in\cB$, and the maps $W^*$, 
$W$, $\pi(b)$ and $\pi$ are continuous, 
it follows that $\phi$ is continuous.  
\end{proof}

\begin{remark}
During the proof of the implication (1)$\Ra$(2) from Theorem~\ref{t:hilmodkasp}, while 
proving that $(V(e_i)^*y)_i$ is a Cauchy net 
for any $y\in\cK$, one can also 
use the Schwarz inequality \eqref{e:schwarz} instead 
of subadditivity of the seminorm $\tl{p}_{\cH}$. An even 
simpler approach is to use inequality \eqref{e:pehapka} in 
Subsection \ref{ss:vhs} to get 
\begin{equation*}
p([h_1+h_2,h_1+h_2]_{\cH})\leq 2(p([h_1,h_1]_{\cH})+p([h_2,h_2]_{\cH}))
\end{equation*}
for any $h_1,\,h_2\in\cH$.
Using this inequality with 
$h_1=V(e_i-e_j)^*y_0$ and 
$h_2=V(e_i-e_j)^*(y-y_0)$ provides a valid proof as well.

Similarly, while 
proving that the net $(V(e_j)h)_j$ is Cauchy 
in $\cK$, one can use the Schwarz inequality \eqref{e:schwarz} 
instead of 
Lemma \ref{l:schwarzforposop}. The details are left to the reader.  
\end{remark}

\end{document}